\def\imod#1{\allowbreak\mkern10mu({\operator@font mod}\,\,#1)}
\newtheorem{theorem}{Theorem}[section]
\newtheorem{prop}[theorem]{Proposition}
\newtheorem{lemma}[theorem]{Lemma}
\newtheorem{claim}[theorem]{Claim}
\newtheorem{assumption}[theorem]{Assumption}
\newtheorem{induction}[theorem]{Properties of the Induction}
\theoremstyle{definition}
\newtheorem{definition}[theorem]{Definition}
\newtheorem{notation}[theorem]{Notation}
\newtheorem{example}[theorem]{Example}
\theoremstyle{remark}
\newtheorem{remark}[theorem]{Remark}
\theoremstyle{remark}
\numberwithin{equation}{section}
\providecommand{\res}{\mathbin{\upharpoonright} }
\DeclareMathOperator{\set}{use}
    \DeclareMathOperator{\limord}{lim}
    \DeclareMathOperator{\lh}{lh}
    \DeclareMathOperator{\dom}{dom}
    \DeclareMathOperator{\ran}{ran}
    \DeclareMathOperator{\im}{im}
    \DeclareMathOperator{\fix}{fix}
    \DeclareMathOperator{\powerset}{\mathcal{P}}
\newcommand{\ON}{\mathrm{On}}
\newcommand{\NS}{\mathsf{NS}}
\newcommand{\rest}{{\upharpoonright}}
    \newcommand{\forces}{\Vdash}
    \newcommand{\la}{\langle}
    \newcommand{\ra}{\rangle}
\def\M{{\mathcal M}}
\def\R{{\mathbb R}}
\def\N{{\mathbb N}}
\def\Z{{\mathbb Z}}
\def\Q{{\mathbb Q}}
\def\PP{{\mathbb P}}
\def\QQ{{\mathbb Q}}
\def\DD{{\mathbb D}}
\def\NN{{\mathbb N}}
\newcommand{\cW}{\mathcal{W}}
\newcommand{\Wa}{\cW^{\textup{a}}}
\newcommand{\Wncs}{\cW^{\textup{ncs}}}
\newcommand{\Wnr}{\cW^{\textup{nr}}}
\newcommand{\ZFC}{\textup{\textsf{ZFC}}}
\newcommand{\ZF}{\textup{\textsf{ZF}}}
\date{\today}
\begin{document}

\title[Good Projective Witnesses]{Good Projective Witnesses}

\author[Fischer]{Vera Fischer}

\address{Vera Fischer, University of Vienna, Kolingasse 14--16, 1090 Vienna, Austria}
\email{vera.fischer@univie.ac.at}

\author[Friedman]{Sy David Friedman}
\address{Sy David Friedman, University of Vienna, Kolingasse 14--16, 1090 Vienna, Austria}
\email{sdf@logic.univie.ac.at}

\author[Schrittesser]{David Schrittesser}
\address{David Schrittesser, Institute for Advanced Study in Mathematics,Harbin Institute of Technology, 150001, Heilongjian, China
\emph{and}} 
\address{Suzhou Research Institute, Harbin Institute of Technology, 215104, Jiangsu, China}
\email{david@logic.univie.ac.at}

\author[T\"ornquist]{Asger T\"ornquist}

\address{Asger T\"ornquist, Department of Mathematical Sciences, University of Copenhagen, Universitetspark 5, 2100 Copenhagen, Denmark}
\email{asgert@math.ku.dk}

\subjclass[2010]{03E17; 03E35}

\keywords{Maximal cofinitary groups; projective; definability; forcing}

\begin{abstract} 
We develop a new forcing notion for adjoining self-coding cofinitary permutations and use it to show that consistently, the minimal cardinality $\mathfrak a_{\text{g}}$ of a maximal cofinitary group (MCG) is strictly between $\aleph_1$ and $\mathfrak{c}$, and there is a $\Pi^1_2$-definable MCG of this cardinality. Here $\Pi^1_2$ is optimal, making this result a natural counterpart to the Borel MCG of Horowitz and Shelah.
Our theorem has its analogue in the realm of maximal almost disjoint (MAD) families, extending a line of results regarding the definability properties of MAD families in models with large continuum.
\end{abstract}

\maketitle

\section{Introduction}

We will be interested in subgroups of $S_\infty$, the group of all permutations of the natural numbers, with the additional property that all of their non-identity elements have only finitely many fixed points. Such groups are referred to as \emph{cofinitary groups}, while permutations which have only finitely many fixed points are referred to as \emph{cofinitary permutations}. A cofinitary group which is not properly contained in another cofinitary group, is called a \emph{maximal} cofinitary group, abbreviated \emph{MCG}. 
One way to see the existence of MCGs is to use the Axiom of Choice (short, \textsf{AC}), which leaves many questions open regarding their possible cardinalities and their descriptive set-theoretic definability.
That a (Borel) MCG can be constructed without appealing to \textsf{AC} is a relatively recent result, 
obtained by Horowitz and Shelah \cite{HHSS}  in 2016.

\medskip

The study of the {\emph{the spectrum}} of maximal cofinitary groups, i.e. 
of the set of different sizes of MCGs, 
 \[
 \operatorname{spec}(\mathrm{MCG}):=\{|\mathcal{G}|: \mathcal{G}\text{ is a maximal cofinitary group}\}
 \]
has been of interest since the early development of the subject. 
It was shown by Adeleke~\cite{Adeleke} that every maximal cofinitary groups is uncountable,
Neumann showed that there is always a maximal cofinitary group of size $\mathfrak{c}$, 
and Zhang~\cite{Zhang} showed whenever $\omega<\kappa\leq\mathfrak{c}$, consistently there is a maximal cofinitary group of size $\kappa$. 
A systematic study of $\operatorname{spec}(\mathrm{MCG})$ is found in~\cite{JBOSYZ}, a study which was later generalized to analyze the higher analogue of maximal cofinitary groups in $S_\kappa$ (see~\cite{VF}; here $\kappa$ is an arbitrary regular uncountable cardinal and $S_\kappa$ denotes the group of permutations of $\kappa$).  
In~\cite{VFAT} it was shown to be consistent that the minimum of $\operatorname{spec}(\mathrm{MCG})$, denoted $\mathfrak{a}_g$, be of countable cofinality.

\medskip

For the following discussion, let us make two definitions:

\begin{definition}\label{def.witness} We refer to maximal cofinitary groups of cardinality $\mu$, as {\emph{witnesses}} to $\mu\in\operatorname{spec}(\mathrm{MCG})$ and to values $\mu\in\operatorname{spec}(\mathrm{MCG})$ such that $\aleph_1<\mu<\mathfrak{c}$ as 
{\emph{intermediate cardinalities (or values)}}.
\end{definition}

The only known ways of constructing MCGs are \textsf{AC}, forcing, and the Horowitz-Shelah construction (\cite{HHSS}). 
It is therefore interesting to ask: 
Given a cardinal $\mu$, what are the possible \emph{definability properties} of a witness to $\mu\in\operatorname{spec}(\mathrm{MCG})$?

\begin{definition}\label{def.good.projective}  {\emph{A good projective witness}} to $\mu\in\operatorname{spec}(\mathrm{MCG})$  %
is a MCG $\mathcal G$ %
of cardinality $\mu$ which is also of lowest projective complexity (i.e., there is no witness to $\mu\in\operatorname{spec}(\mathrm{MCG})$ whose definitional complexity lies strictly below that of $\mathcal G$ in terms of the projective hierarchy).
\end{definition}

It is easy to see that if a MCG is $\Sigma^1_n(r)$, it is $\Delta^1_n(r)$. 
Gao and Zhang (see~\cite{SGYZ}) showed that in $L$, there exists a MCG of size $\omega_1$ with a co-analytic set of generators, a result which was later improved by Kastermans~\cite{BK}, who showed that in $L$ there is a co-analytic MCG.  
The first, third, and fourth authors found a co-analytic MCG in $L$ which remains maximal after adding Cohen reals \cite{VFDSAT}. 
This showed that in a generic extension of $L$, there is a $\Pi^1_1$ good projective witness to $\omega_1\in\operatorname{spec}(\mathrm{MCG})$.\footnote{In this model $2^{\aleph_0}>\aleph_1$, ruling out a Borel witness to $\aleph_1\in\operatorname{spec}(\mathrm{MCG})$.}

For a long time existence of analytic (equivalently, Borel) MCGs was one of the most interesting open questions in the area, a question which was answered affirmatively by the beforementioned construction due to Horowitz and Shelah in~\cite{HHSS}.\footnote{The second author of the present paper later improved their result.} 
By their result, there is a Borel witness to $\mathfrak c \in \operatorname{spec}(\mathrm{MCG})$.

\medskip

So far the study of definable witnesses to $\mu\in\operatorname{spec}(\mathrm{MCG})$ has concentrated on either $\mu=\aleph_1$ or $\mu=\mathfrak{c}$; 
nothing is known about the definability properties of witnesses to intermediate cardinalities. 
The present paper is motivated by the question: {\emph{What can we say about the definability properties of maximal cofinitary groups $\mathcal{G}$ such that $\aleph_1<|\mathcal{G}|<\mathfrak{c}$?}}  

Here is a first answer:
\begin{theorem}\label{t.L(R)}
It is relatively consistent with $\ZFC$ that $\mathfrak c\geq\aleph_3$ and there is an MCG $\mathcal G$ of size $\aleph_2$, $\mathcal G\in L(\R)$.
\end{theorem}
In other words, consistently there exists a witness to an intermediate value  in $L(\R)$. 
While we sketch a proof of a slight strengthening of Theorem~\ref{t.L(R)} in Section~\ref{section.group.forcing} for expository purposes, for an optimal answer we must find a model with a good projective witness.
For this, first note that a $\mathbf{\Sigma}^1_2$ maximal cofinitary group must be either of size $\aleph_1$ or continuum (being the union of $\aleph_1$ many Borel sets). By this observation, the lowest possible projective complexity of a
witnesses to intermediate values in $\operatorname{spec}(\mathrm{MCG})$ is $\Pi^1_2$. 

\medskip

Our main theorem is the following:
\begin{theorem}\label{t.main} 
It is relatively consistent with $\ZFC$ that $\mathfrak c \geq\aleph_3$ and there exists a $\Pi^1_2$ MCG  of size $\aleph_2$.
Thus, it is consistent that there is a $\Pi^1_2$ good projective witness to an intermediate value in
$\operatorname{spec}(\mathrm{MCG})$.
\end{theorem}

In fact, we show:

\begin{theorem}\label{t.precise} Let $2\leq M < N < \aleph_0$ be given. There is a cardinal preserving generic extension of the constructible universe $L$ in which $$\mathfrak{a}_g=\mathfrak{b}=\mathfrak{d}=\aleph_M<\mathfrak{c}=\aleph_N$$
and there is a $\Pi^1_2$ definable maximal cofinitary group of size $\aleph_M$.
\end{theorem}

\noindent
(The cardinal characteristics $\mathfrak b$ and $\mathfrak d$ referred to in the above theorem are the \emph{bounding number} and the \emph{dominating number}; for an introduction to cardinal characteristics, see \cite{BLASS}).

\begin{remark} Providing a model in which there is a maximal cofinitary group of cardinality $\mu$ where $\aleph_1<\mu<\mathfrak{c}$ and 
$\aleph_\omega<\mathfrak{c}$, or even $\aleph_\omega\leq\mu$, is possible, but our proof would be considerably more technical since it uses Jensen coding. 
For the sake of clarity and brevity we have chosen to restrict our work to values of the continuum below $\aleph_\omega$.
\end{remark}

While much of the proof in this article also applies to the case $M=1$ in  Theorem~\ref{t.precise}, 
one shold not expect to produce a model with a good projective witness to $\aleph_1\in\operatorname{spec}(\mathrm{MCG})$ in this way. 
As has been mentioned, the consistency of  such a witness was already shown by the first, third, and fourth authors in~\cite{VFDSAT} by constructing a Cohen-indestructible $\Pi^1_1$ MCG in $L$; in this model $\mathfrak{a}_g=\mathfrak{d}=\aleph_1<\mathfrak{c}$. 
The consistency of $\mathfrak{a}_e=\mathfrak{d}=\aleph_1<\mathfrak{c}$ with a good projective witness to $\mathfrak{a}_e$ is proved in \cite{VFDS} ($\mathfrak a_e$ is the smallest size of a maximal eventually different family).

\medskip

Let us extend our terminology to MAD families by writing
 \[
 \operatorname{spec}(\mathrm{MAD}):=\{|\mathcal{A}|: \mathcal{A}\text{ is a MAD family}\}
 \]
and let us speak of  \emph{witness} and \emph{good projective witness to} $\mu\in\operatorname{spec}(\mathrm{MAD})$, and \emph{intermediate cardinalities}, with the obvious meaning analogous to Definitions~\ref{def.witness} and~\ref{def.good.projective}. 

Studies of the definability properties of maximal almost disjoint families can be found in~\cite{JBYK, VFSFYK, SFLZ, AT}. 
With the exception of~\cite{AT}, in all of these studies the maximal almost disjoint family of interest is of cardinality $\mathfrak{c}$.

Our techniques easily modify to the study of maximal almost disjoint families 
and provide the following result:

\begin{theorem} Let $2\leq M < N < \aleph_0$ be given. There is a cardinal preserving generic extension of the constructible universe $L$ in which $$\mathfrak{a}=\mathfrak{b}=\mathfrak{d}=\aleph_M<\mathfrak{c}=\aleph_N$$
and the MAD family witnessing $\aleph_M\in\operatorname{spec}(\mathrm{MAD})$ is $\Pi^1_2$, i.e., it is a good projective witness to the intermediate cardinality $\aleph_M\in\operatorname{spec}(\mathrm{MAD})$.
\end{theorem}

A good witness to   $\mathfrak{c}\in\operatorname{spec}(\mathrm{MAD})$ is constructed by Brendle and Khomskii  in~\cite{JBYK}, while a Cohen indestructible co-analytic maximal almost disjoint family in $L$ is a good witness to $\aleph_1\in\operatorname{spec}(\mathrm{MAD})$. The study of projective witnesses does not limit to MCGs and mad families. Let $\operatorname{spec}(\mathrm{IND})$ denote the set of possible cardinalities of maximal independent families. One of the main results of~\cite{JBVFYK} shows that $\aleph_1\in\operatorname{spec}(\mathrm{IND})$ has a good projective witness, while the existence of a good projective 
witness to $\mathfrak{c}\in\operatorname{spec}(\mathrm{IND})$ is still open.

\medskip

Let us say a word about the methods used in this paper.
Two techniques had to be devised: 
\begin{enumerate}
	\item  A mechanism which adjoins a MCG, or more generally, enlarges a MCG by forcing in such a way that each new group element ``codes'' a pre-ordained subset of $\omega$; this is the content of Section~\ref{section.group.forcing}.
	\item A way to ensure that the definition of the resulting maximal cofinitary group is of minimal complexity in the projective hierarchy; for this, both Section~\ref{section.group.forcing} as well as the entire remainder of the paper are relevant.
\end{enumerate}
A very special case of the first problem was solved in \cite{VFDSAT}, namely the case where the group to be enlarged is countable, or equivalently, the group to be adjoined has size at most $\aleph_1$ 
(this built on previous work in \cite{Zhang,VFAT}, which describe forcings to adjoin MCGs but without any coding requirement). 
In this special case, what we call ``coding paths'' can always be taken to be disjoint, which simplifies the forcing immensely. In this paper, we solve the problem without this cardinality restriction. 
This requires a restriction on the type of group which can be enlarged. 

\medskip

To solve the second problem, we use a technique, originally inspired by \cite{DAVID}, of adjoing reals to make a given set projective.
The groundwork for this approach was laid for \cite{memoirs} and it has previously been employed in different contexts, e.g., in \cite{VFSF,VFSFLZ11}.
Making use of this approach in the present context is not straightforward: 
We must 
simultaneously guarantee maximality of our group, and that the group's elements code reals which make its definition projective.
This difficulty was eventually resolved by a very careful arrangement of the entire forcing iteration, and by using generic eventually different families (see the discussion at the beginning of Section~\ref{section.group.forcing} and the road-map given at the beginning of Section~\ref{s.iteration}).

\medskip

These ideas lend a flexibility to our construction without which
our final goal could not be achieved. 
They also present promising and robust techniques to address existing open problems. Some of the many naturally occurring remaining open questions are discussed in our final section.

\subsection*{Structure of the paper}
Section \ref{section.group.forcing} presents a new forcing notion which can adjoin self-coding permutations to a given cofinitary group. Section \ref{s.iteration} presents the entire forcing construction leading to our main result. Our main result is established in Section \ref{s.definability.maximality}. Some remaining open problems are listed in Section \ref{s.questions}.

\subsection*{Acknowledgments}

The first author would like to thank the Austrian Science Fund (FWF) for the generous support through START Grant Y1012-N35. The second author would also like to thank the FWF for its generous support through projects P25748 and I1921. The third author thanks the FWF for its support through project P29999 as well as the Basic Research Program of Jiangsu for support through project BK20241788 (``Non-Standard Analysis and Its Applications in Statistics and Economics'').
The fourth author gratefully acknowledges support from DFF (Independent Research Fund Denmark) through the project ``Operator Algebras, Groups, and Quantum Spaces''. %

\section{Adding cofinitary groups of coding permutations}\label{section.group.forcing}

In this section, we introduce a forcing $\QQ$ which enlarges a (certain type of) cofinitary group from the ground model to a larger cofinitary group by adding a single generic permutation $\sigma^G$ as a new generator, while at the same time ensuring that particular sets (from the ground model) are constructible from \emph{each} new group element.

\medskip

First, why do we need this forcing? 
To simplify the discussion, assume we aim to find a model with a MCG $\mathcal G$ of size $\mu = \omega_2 < 2^\omega$ and such that $\mathcal G$ is \emph{definable in $L(\R)$ without parameters} (instead of projectively definable).

\begin{notation}
For the rest of this paper, let us fix a computable bijection 
\begin{equation}\label{e.comp.bij}
\psi:\omega\times\omega\to\omega.
\end{equation}
\end{notation}
Let us suppose until the end of this section that already $2^\omega > \mu$; and that we have at our disposal a definable collection 
\[
\la S_{\xi,m} \colon \xi< \mu,m<\omega \ra \in L
\] 
of subsets of $\omega_1$ which are stationary in $L(\R)$ but not in $V$.
More precisely, let us assume that for each $(\xi,m)\in\mu\times\omega$ there is $C_{\xi,m} \in V$ such that  
\[
L(\R)[C_{\xi,m}]\vDash S_{\xi',m'} \in \NS \iff (\xi',m')=(\xi,m) 
\]
(this is not hard to arrange; see Section~\ref{s.iteration}).

Our aim is that $\mathcal G$ be definable as follows: $g \in \mathcal G$ if and only if
\begin{equation}\label{e.def}
(\exists \xi < \mu)(\forall m\in\omega) \; \left(m\in\Psi[g] \iff L[g]\vDash S_{\xi,m}\text{ is \emph{not} stationary}\right).
\end{equation}
To create this model, we face the following problem:
Given a cofinitary group $\mathcal G_0$, find a cardinality preserving forcing $\QQ$ which adds a cofinitary $\sigma\in S_\infty$
such that
\begin{enumerate}[label=(\Alph*),ref=\Alph*]
\item\label{i.perm} the group $\mathcal G_1$ generated by $\mathcal G_0\cup\{\sigma\}$ is cofinitary and maximal with respect to the ground model, i.e., for no $f \in S_\infty \cap V$ is $\mathcal G_0\cup\{\sigma,f\}$ cofinitary;
\item\label{i.code} For all $g\in S_\infty$, it holds that $g \in \mathcal G_1\setminus \mathcal G_0\iff$ \eqref{e.def}.
\end{enumerate}
To simplify the combinatorial properties of $\QQ$ we choose to replace \eqref{i.code} by the following (see \ref{r.subwords} for details):
\begin{enumerate}[label=(\Alph*$'$),ref=\Alph*$'$,start=2]
\item\label{i.code'} For all $g\in S_\infty$, it holds that $g \in \mathcal G'_1\iff$ \eqref{e.def}, where $\mathcal G'_1\subsetneq \mathcal G_1\setminus \mathcal G_0$ is a specifically chosen, ``sufficiently large'' subset.
\end{enumerate}
We may then iterate $\QQ$ (taking for $\mathcal G_0$ the group generated by generic permutations added at previous steps of the iteration, starting with the trivial group) to length $\mu$ and obtain the desired model.

A forcing that will achieve Item \eqref{i.perm} was invented by Zhang \cite{Zhang}.
For Item \eqref{i.code'}, in particular for $\Rightarrow$ in \eqref{e.def},
we want that for each $g\in\mathcal G'_1$ there is $\xi=\xi(g) <\mu$ such that $C_{\xi,m}\in L[g]$, i.e., ``$C_{\xi,m}$ is coded by $g$'', for each $m \in\Psi[g]$.
\begin{remark}\label{r.conditional}
Our construction will ensure that $S_{\xi,m}$ remains stationary in $L(\R)$ when $m \notin \Psi[g]$ (see Lemma~\ref{no_accidental_real}). This is how we will show  $\Leftarrow$ in \eqref{e.def}, and thus, Item~\eqref{i.code'} (in Lemma~\ref{lemma1}). 
For this it is essential that coding $C_{\xi,m}$ by a real (in fact, by $g$) is conditional on $\Psi^{-1}(m) \in g$ and so it must be done simultaneously with adding $g$.
\end{remark}
As part of our solution, we define $\QQ$ as a hybrid between Zhang's forcing and Solovay's almost disjoint coding: 
Each
$g\in \mathcal G'_1$ will code a sequence $\la Y_{\xi(g),m}: m \in \Psi[g]\ra$ using almost-disjoint coding with respect to an eventually different family $\mathcal F$ of permutations.
For the present discussion, the reader may assume $Y_{\xi,m}=C_{\xi,m}$ 
(in the next section we discuss how to build $Y_{\xi,m}$ to achieve $\mu>\aleph_2$ and a projective definition of $\mathcal G$).

For the above strategy to succeed, we need a family $\mathcal F$ with particular properties: Firstly, for the density argument below in Lemma~\ref{l.generic.hitting} we need that for any $f \in \mathcal F$, $\mathcal G_0\cup\{f\}$ is cofinitary.
Secondly, to obtain maximality relative to the ground model in Item \eqref{i.perm} we must choose $\mathcal F \cap V=\emptyset$. 
The second property means that $g$ also has to ``code'' $\mathcal F$ in some way, since we want that $Y_{\xi,m}$ is constructible relative to only $g$.
We can use Solovay's forcing to ensure $\mathcal F \in L[c^{\mathcal F}]$ for some $c^{\mathcal F}\in\powerset(\omega)$ and add the following to the list of tasks for our forcing, where $z=c^{\mathcal F}$:
\begin{enumerate}[label=(\Alph*),ref=\Alph*,start=3]
\item\label{i.computable.coding} For all $g\in \mathcal G'_1$, a pre-ordained real $z$ is computable from $g$.\footnote{In fact, the problems  which lead us to restrict to $\mathcal G'_1$ arise precisely from the coding demand in \eqref{i.computable.coding}; see \ref{r.subwords}.}
\end{enumerate}
For this, we must make some kind of assumption on $\mathcal G_0$ (see Assumption~\ref{a.sufficiently.generic}). 
In fact, as it turns out, it is possible to specify a \emph{different} real, $z=z^g$, to be computable in $g$, for each $g\in \mathcal G'_1$.
While we could make do by taking $z = c^{\mathcal F}$ for every $g \in \mathcal G'_1$, allowing $z^g$ to vary with $g$ comes at almost no additional effort and it will prove to be convenient (see \eqref{e.z^w}).

\subsection{Finite partial extensions of cofinitary groups}\label{s.finite.partial}

As indicated above, we wish to adjoin a new generator, $\sigma \in S_\infty$ to extend a cofinitary group $\mathcal G_0\leq S_\infty$. 
This will be done using a forcing $\QQ$ whose conditions contain,  among other information, finite partial injective functions $s \colon \omega \rightharpoonup \omega$ approximating $\sigma = \sigma^G$ ($G$ is the $\Q$-generic set).
An injective partial function $s:\NN\rightharpoonup\NN$ will be referred to as a \emph{partial permutation}. 

The group $\mathcal G_0$ will naturally come with its own set of generators.
Each partial permutation $s \colon \omega \rightharpoonup \omega$ then defines a monoid (a set with an associative binary operation and a two-sided identity) extending $\mathcal G_0$.
Some terminology will be useful.

\medskip

Given a set $A$ (the `index set') write $\mathbb{F}(A)$ for the free group with generating set $A$. 
A convenient presentation of $\mathbb{F}(A)$ is as the reduced words $W_A$ in the alphabet $A\cup A^{-1} := \{a^{i} : a \in A, i\in\{-1,1\}\}$ where the group operation is ``concatenate and reduce'' and the unit is the empty word $\emptyset$ (see, e.g., \cite[Normal Form Theorem]{lyndon-combinatorial}).

\medskip

For $w_0, w_1 \in W_{A}$ we say $w_1$ is a \emph{proper conjugate subword of} $w_0$ if $w_0 = w^{-1} w_1 w$ for some word $w \in W_{A}\setminus\{\emptyset\}$ and $w_1 \neq \emptyset$.
We say $w_0$ is a \emph{root} of $w_1$ if $w_0 \neq w_1$ and $w_1 = (w_0)^n$ for some $n\in\omega$  (so that $n>1$, one might add, without changing the definition).

Write $w \sqsupseteq w'$\label{d.sqsupseteq} to mean that $w'$ is a \emph{right-initial segment} (or initial segment from the right) of $w$, i.e., $w = a_n\hdots a_0$ and $w' = a_k\hdots a_0$ with $0\leq k \leq n$, or $w'=\emptyset$.

Similarly, we say that $w'$ is an \emph{left-initial segment} (or initial segment from the left) of $w$ to mean $w = a_n\hdots a_0$ and $w' = a_n\hdots a_k$ with $0\leq k \leq n$, or $w'=\emptyset$.

\medskip

Another naturally appearing notion is that of a \emph{circular shift (with offset $k$)} of a word (see~\cite{VFDSAT}) in $W_A$. More precisely, given such a word $w=w_n\cdots w_1$, where $w_i=a_i^{j_i}$, $j_i\in\{-1,1\}$ with $a_i \in A$ for each $i$, and a permutation $\sigma:\{1,\cdots, n\}\to\{1,\cdots,n\}$ such that
$\sigma(i)=i + k\hbox{ mod }n$ for some $k\in \NN$, we will refer to $w_{\sigma(n)}\cdots w_{\sigma(1)}$ as a circular shift (with offset $k$) of $w$. Thus, in particular, for each $n$ there are only finitely many circular shifts of a given word.

\medskip

For $X \subseteq S_\infty$, we write $\la X\ra$ for the subgroup of $S_\infty$ generated by $X$.
We call an injection $\rho: A\to S_\infty$ such that $\la\im(\rho)\ra$ is a cofinitary group, a \emph{cofinitary representation}.\label{d.cofinitary.representation} 
Such a map obviously gives rise to a group homomorphism $\mathbb{F}(A) \to S_\infty$, denoted also by $\rho$.

\medskip

Given some $a$ such that $\{a,a^{-1}\} \cap A = \emptyset$, let us write $W_{A,a}$ for $W_{A\cup\{a\}}$. 
Given a word $w\in W_{A,a}$ and a (possibly partial) permutation $s$ we denote by $w[s]$ the (possibly partial) permutation $w[s]\colon \omega \rightharpoonup\omega$ obtained by substituting each occurrence of $x^j$ where $x\in A$ and $j\in\{-1,1\}$ with $\rho(x)^j$ and $a^j$ where $j\in\{-1,1\}$ with $s^j$. 

\medskip

Let us make precise the notion of \emph{path}: Given a word $w\in W_{A,a}$ and writing $w=a_n^{j_n}\cdots a_1^{j_1}$, where $j_i\in\{-1,1\}$ and $a_i \in A\cup\{a\}$ for $1 \leq i\leq n$, and given a (possibly partial) permutation $s$, \emph{the path} of a given integer $m$ under $(w,s)$ is the sequence $\la m_k:k\in\alpha\ra$, where $m_0=m$; and for each $k$ such that $k=nl+i$ with $i<n$, 
$$m_k=(a_i^{j_i}\cdots a_1^{j_1}w^{nl})[s](m),$$
and where $\alpha$ is either $\omega$, or denotes $k+1$ where $k$ is maximal so that $m_{k}$ as above is defined.
In the latter case we say the path has \emph{terminating value} or \emph{terminating point} $m_{\alpha-1}$.
We shall also say that $a^{j_i}$ is the \emph{letter applied at step $k-1$}, when $k>0$.

Sometimes it suffices to think of the path merely as a set, rather than as a sequence; so let
\[
\set(w,s,m)=\{ m_i \colon i < \alpha \}.
\]

\medskip

Finally, we shall frequently refer to the following subsets of $W_{A,a}$:
\begin{notation}\label{n.words}
Let us write 
\begin{itemize}
\item  $\Wa$ for the set of words from $W_{A,a}$ in which $a$ or $a^{-1}$ occurs at least once,
\item $\Wncs$ for the set of words from $\Wa$ without any proper conjugate subwords, 
\item $\Wnr$ for the set of words $w\in\Wa$ without any \emph{roots}, i.e., so that there is no $w' \in \Wa\setminus\{w\}$ and $m\in \omega$ with $w = (w')^m$.
\end{itemize}
\end{notation}

\begin{remark}\label{r.subwords}
From now on, when speaking of the computable coding (as in \eqref{i.computable.coding}, p.~\pageref{i.computable.coding}) of a real $z^g$ by $g$ for each $g \in \mathcal G'_1$, let us write just $z^w$ instead of $z^g$ with $g=w[\sigma^G]$.
We are now also able to give a precise definition of $\mathcal G'_1$ (used in \eqref{i.code'} and \eqref{i.computable.coding} above): 
We let
$\mathcal G'_1 = \{w[\sigma^G] : w\in\Wncs\cap\Wnr\}$. 
While this simplifies the computable coding of $z^w$ by $w[\sigma^G]$,
we nevertheless obtain a $\Pi^1_2$ defininition of $\mathcal G_1 \setminus \mathcal G_0$, because each element of $\mathcal G_1 \setminus \mathcal G_0$ will be projectively equidefinable with a permutation from $\mathcal G'_1$ (see Lemma~\ref{l.Pi-1-2}).

\end{remark}

Given a word $w \in \Wa$, define $l(w) \in \omega\setminus\{0\}$, $\langle g^w_i : i \leq l(w)\rangle$ with each $g^w_i \in W_A$ (i.e., $a$ or $a^{-1}$ does not occur) and $\langle j^w_i : i < l(w)\rangle$ with each $j^w_i \in \Z\setminus\{0\}$ such that 
\begin{equation}\label{e.word.split}
w = g^w_{l(w)} a^{j^w_{l(w)-1}} g^w_{l(w)-1} \hdots a^{j^w_0} g^w_0
\end{equation}
where each $g^w_i$ is of maximal possible length, but not equal to the empty word when $0 < i <l(w)$. In other words, $g^w_{l(w)}$ and $g^w_0$ are the left-most and right-most segments of $w$ in which neither $a$ or $a^{-1}$ occurs (either can be equal to $\emptyset$) and $g^w_1, \hdots, g^w_{l-1}$ enumerate the maximal subwords, if any, of $w$ in which neither $a$ nor $a^{-1}$ occurs.

We will also write $g^w_L$ for $g^w_{l(w)}$ and $g^w_R$ for $g^w_0$ (the subscripts stand for ``left-most'' and ``right-most'', of course).

Similarly, we write 
\begin{align*}
i^w_R &=\begin{cases} 1 &\text{if $j^w_0 > 0$,}\\
-1 &\text{if $j^w_0 < 0$;}
\end{cases} \\
i^w_L &=\begin{cases} 1 &\text{if $j^w_{l(w)-1} > 0$,}\\
-1 &\text{if $j^w_{l(w)-1} < 0$.}\\
\end{cases} 
\end{align*}
\begin{example}
Supposing $w=b_1 b_0 a^{-1}a^{-1}$ with $b_0,b_1 \in A$, we have $g^w_L=b_1 b_0$, $g^w_R=\emptyset$, $l(w)=1$, $j^w_0=-2$, and $i^w_R=i^w_L=-1$.
\end{example}

\subsection{The Conjugated Subwords Lemma}

The next lemma will play a role in density arguments in Lemma~\ref{lemma.bijective} and Lemma~\ref{lemma.generic.coding}. Also, based on this lemma, restricting to words in $\Wncs$ allows a simpler definition of the forcing (see Remark~\ref{r.order}) in comparison, e.g., to its ancestor in \cite{Zhang}.
As before, the reader may think of $s$ below as a finite approximation to the generic permutation $\sigma^G$ we wish to add.
\begin{lemma}\label{l.csw}
Suppose $w \in \Wncs$, $s$ is a partial injective map from $\omega$ to $\omega$, and $n\notin\dom(s)$.
Then for all but a finite set $E^w_{s,n}$ of $n' \in\omega$, letting $s'=s\cup\{(n,n')\}$ 
it holds that $s'$ is injective and $\fix(w[s])=\fix(w[s'])$.
\end{lemma}
Note that the lemma can be read as a sufficient condition for having a conjugated subword:
Given $w$, $s$, and $n$ as in the lemma, if there are infinitely many $n'$ such that $\fix(w[s])\neq\fix(w[s'])$ for $s'=s\cup\{(n,n')\}$, then $w$ must have a proper conjugated subword.

\begin{proof}
Let $W^*$ be the set of subwords of circular shifts of $w$ and let 
\begin{equation}
\begin{split}\label{e.domain.ext.cont}
E^w_{s,n}= &\bigcup \big\{ \fix(w'[s]) \colon w' \in W^*\setminus\{ \emptyset\}\big\} \cup\\
&\big\{ w'[s]^i (n) \colon i \in \{-1,1\}, w' \in W^* \big\}\cup\\
&\ran(s).
\end{split}
\end{equation}
Let $n' \notin E^w_{s,n}$ be aritrary.
As $n'\notin\ran(s)$, $s'$ is injective and as $n'$ is not from the set in the second line of \eqref{e.domain.ext.cont}, $n' \neq n$ (noting $\emptyset \in W^*$ and $\emptyset[s]=\rho(\emptyset)$). 

Assume towards a contradiction that $m_0 \in \fix (w[s']) \setminus \fix(w[s])$. 
We will reach a contradiction by finding a proper conjugated subword (namely, $w_1$ below) of $w$.

As the $(w,s)$-path of $m_0$ differs from the $(w,s')$-path, the latter must contain an application of $a$ to $n$ or of $a^{-1}$ to $n'$. Write this latter path (omitting some steps) as
\begin{equation}\label{e.path.basic}
m_0 \stackrel{w_{l+1}}{\longleftarrow} m_{k(l)+1} \stackrel{a^{j(l)}}{\longleftarrow} m_{k(l)} 
\stackrel{w_{l}}{\longleftarrow} \hdots
\stackrel{w_1}{\longleftarrow} m_{k(0)+1} \stackrel{a^{j(0)}}{\longleftarrow} m_{k(0)} 
\stackrel{w_{0}}{\longleftarrow} m_0
\end{equation}
where for each $i \leq l$, $j(i) \in \{ -1,1\}$ and $\{ k(i) \colon i \leq l\}$ is the increasing enumeration of the set of $k$ such that
$m_k =n$ and $a$ is applied or $m_k = n'$ and $a^{-1}$ is applied at step $k$.
Thus by definition $w_i[s] = w_i [s']$ for each $i \leq l +1$.

The following hold by definition of the $k(i)$ and by choice of $n'$:
\begin{enumerate}[(i)]
\item\label{i.1} Unless $i\in\{0,l+1\}$ (i.e., $w_i$ is the first or last word in \eqref{e.path.basic}) it must hold that $w_i \neq \emptyset$:

Towards a contradiction, assume $w_i = \emptyset$ and $0<i<l+1$.
Then $j(i-1) \neq j(i)$ as $n \neq n'$.
But if
$\lh(w) \nmid \lh(a^{j(i-1)} \hdots w_0)$
then $a^{j(i)}a^{j(i-1)}$ is a subword of $w$ and 
 $j(i-1) \neq j(i)$ is impossible as adjacent $a$ and $a^{-1}$ are not allowed in the reduced word $w$.
If on the other hand
$\lh(w) \mid \lh(a^{j(i-1)} \hdots w_0)$,
then $a^{j(i-1)}$ is a left- and $a^{j(i)}$ a right-initial segment of $w$, so as $j(i-1) \neq j(i)$, $w$ has a proper conjugate subword; contradiction.
\item\label{i.2} For no $i \leq l$ is it the case that $w_{i}[s]$ sends $n$ to $n'$ or vice versa.
This is by choice of $n'$.
\item\label{i.3} For no $i \leq l$ is $n'$ a fixed point of $w_i[s]$ unless $w_i = \emptyset$, again by choice of $n'$.
\end{enumerate}
From this it follows that unless $i=0$ or $i=l+1$,
the values in the path appearing adjacent to $w_i$, i.e., $m_{k(i-1)+1}$ and $m_k(i)$, are both $n$. 
There is at least one such $i$, for the path cannot have the following form:
\begin{equation}\label{e.path.not}
m_0 
\stackrel{w_1}{\longleftarrow} m_{k(0)+1} \stackrel{a^{j(0)}}{\longleftarrow} m_{k(0)} 
\stackrel{w_{0}}{\longleftarrow} m_0
\end{equation}
for then $w_0 w_1$---a subword of a cyclic shift of $w$--- or its inverse sends $n$ to $n'$ which is impossible by choice of $n'$.
Thus $w_2$ and $j(1)$ are defined; by \ref{i.2} and \ref{i.3} we have 
$m_{k(1)} = m_{k(1)+1} = n$, $j(0) = -1$,  $j(1)=1$, and so $n \in \fix(w_1)$. 

Finally $j(2)$ cannot be defined as otherwise by \ref{i.1} $w_2$ must be non-empty and send $n'$ to one of $\{ n, n'\}$, contradicting Items \ref{i.2} or \ref{i.3} above.
So the path in \eqref{e.path.basic} has the following form:
\begin{equation}\label{e.path.semifinal}
m_0 
\stackrel{w_2}{\longleftarrow} n' \stackrel{a^{-1}}{\longleftarrow} n 
\stackrel{w_1}{\longleftarrow} n \stackrel{a}{\longleftarrow} n' 
\stackrel{w_0}{\longleftarrow} m_0
\end{equation}
As $w_0 w_2$ is a subword of a cyclic shift of $w$,
$w_0 w_2 = \emptyset$ since we made sure $n' \notin \fix(w_0 w_2[s])$ otherwise.
So $w_2 = {w_0}^{-1}$ and $w_1$ is a proper conjugate subword of $w$.
Again, we reach a contradiction.
\end{proof}

\subsection{Two Lemmas on free groups}

In the next section, we discuss computable coding, i.e., the mechanism by which each $w[\sigma^G]$ should code its assigned real, $z^w$.
For this (in particular for the proof of Lemma~\ref{l.computable.coding}) we need a combinatorial tool, Lemma~\ref{l.combinatorial} below, which is best phrased in the language of free groups. 
For reasons of clarity, we start by proving a simple special case.
\begin{lemma}\label{l.combinatorial.simple}
Let $X$ be an arbitary set. Suppose we are given $w_0, w_1 \in \operatorname{\mathbb{F}}(X)$,
both without proper conjugate subwords,
and $n_0,n_1 \in \omega$ such that 
\begin{equation}\label{e.powers.agree}
(w_0)^{n_0} = (w_1)^{n_1}.
\end{equation}
Then there exists $v \in \operatorname{\mathbb{F}}(X)$ and $m_0, m_1 \in \omega$ such that
$w_0=v^{m_0}$ and $w_1=v^{m_1}$, i.e., $w_0$ and $w_1$ have a common root.
\end{lemma}
In fact, the assumption that $w_0,w_1\in\Wncs$ can be dropped. 
The proof of this latter fact is of no matter to us and we omit it.
\begin{proof}
We may assume $n_0,n_1 > 1$.
For each $i\in\{0,1\}$, write $\lh(w_i) = c \cdot l_i$ where 
\[
c=\operatorname{gcd}\big(\lh(w_0),\lh(w_1)\big)
\]
and define, for each $k<l_i\cdot n_i$, 
\begin{gather*}
y^i_k \colon \{0,\hdots,c-1\}\to X\cup X^{-1},\\
y^i_k(j)=\text{$(k\cdot c + j)$th letter of $(w_i)^{n_i}$ (from the right)}
\end{gather*}
for each $j<c$.
In other words,
\begin{multline*}
(w_0)^{n_0} = y^0_{n_0 \cdot l_0-1} \hdots y^0_0 = \left(y^0_{l_0-1} \hdots y^0_0\right)^{n_0}=\\
(w_1)^{n_1} = y^1_{n_1 \cdot l_1-1} \hdots y^1_0 = \left(y^1_{l_1-1} \hdots y^1_0\right)^{n_1}
\end{multline*}
and so it holds for every $i\in\{0,1\}$ and every $k<l_i$ that
\begin{equation*}%
y^{1-i}_{k} = y^{1-i}_{k+l_1} = y^i_{k + l_1} = y^i_{(k+l_1) \bmod{l_i}}.
\end{equation*}
Since $l_0$ and $l_1$ are relatively prime, $k \mapsto (k+l_{1-i}) \bmod{l_i}$ defines a bijection of $l_i$ whose action (on $l_i$) has a single orbit. 
Note that in this calculation, no $k \geq l_0+l_1$ occurs.
Letting $v = y^0_0$, it follows that for every $i \in \{0,1\}$ and every $k<l_i$, $y^i_k = v$, proving the lemma. 
\end{proof}
We can see from the proof that the conclusion of Lemma~\ref{l.combinatorial.simple} holds also under weaker assumptions. 
It is enough, e.g., that the first $\lh(w_0)+\lh(w_1)$ letters of the two words in Equation \eqref{e.powers.agree} agree.
In fact, we are interested in a situation equivalent to the one just described.
Let us therefore prove the following stronger form of the previous, taylored precisely to the situation we shall find ourselves in (see Claim~\ref{e.g_L.differ} below):
\begin{lemma}\label{l.combinatorial}
Let $X$ be an arbitary set and suppose we are given $w'_0, w'_1 \in \operatorname{\mathbb{F}}(X)$
and $n_0,n_1 \in \omega$.
For each $i\in\{0,1\}$, write $l'_i = \lh\big((w'_i)^{n_i}\big)$ and let
\[
(w'_i)^{n_i} = w_{i,l'_i-1}  \hdots w_{i,0} 
\]
in reduced form, with $w_{i,j}\in X\cup X^{-1}$ for each $j<l'_i$.
Suppose further we have 
two intervalls $[a_i, b_i] \subseteq \{0, \hdots, l_i-1\}$ (for $i\in\{0,1\}$) of equal length such that letting $f \colon [a_0,b_0]\to[a_1,b_1]$ be the unique order preserving bijection, it holds that
\[
(\forall j \in [a_0,b_0])\; w_{0,j}=w_{1,f(j)}.
\]
Then provided that 
\begin{equation}\label{e.few}
b_0 - a_0 \geq \lh(w'_0)+\lh(w'_1)
\end{equation}
there exists $v_0,v_1 \in \operatorname{\mathbb{F}}(X)$ and $m_0, m_1 \in \omega$ such that
$w'_0=(v_0)^{m_0}$ and $w'_1=(v_1)^{m_1}$ and $v_1$ is a cyclic shift of $v_0$ (with offset $a_1 - a_0$).
\end{lemma}
The statement of the lemma is much more involved than its proof.
\begin{proof}
Let us first simplify by arranging that $a_0=a_1=0$; this is possible by replacing $w'_i$, for each $i\in \{0,1\}$, by its cyclic shift $w_i$ with offset $a_i$.
By \eqref{e.few}, $f$ is just the identity on a superset of $\{0, \hdots, \lh(w_0)+\lh(w_1)-1\}$ and 
we find ourselve almost in the situation of the previous lemma, except that we only have equality on the first $\lh(w_0)+\lh(w_1)$ letters.
By the proof of the previous lemma, we conclude that $w_0$ and $w_1$ have a common root $v$.

Now observe that for each $i\in\{0,1\}$, from the root $v$ of the cyclic shift $w_i$ of $w'_i$, we obtain a root $v_i$ of $w'_i$ by taking a cyclic shift:
Let $v_i$ be the cyclic shift of $v$ with offset $-a_i$.
\end{proof}

\subsection{Computable Coding}\label{s.computable.coding}

As the final prerequisite to defining the forcing $\QQ$ we need to fix the algorithm used in the computable coding of $z^{w}$ by the permutation $w[\sigma^G]$ (as discussed at the beginning of this section).

\begin{definition}[Computable coding, Part 1]\label{d.computable.coding.1}~
\begin{enumerate}
\item\label{d.partition} Fix a computable sequence $\la P_n : n\in \omega\ra$ of infinite sets $P_n\subseteq \omega$; for concreteness, let us say $P_n$ consists of the natural numbers which are divisible $2^n$ but not by $2^{n+1}$.

\item\label{i.sufficiently.generic} Let us say that our cofinitary representation\footnote{For the definition of cofinitary representation, see p.~\pageref{d.cofinitary.representation}.} $\rho$ is \emph{sufficiently generic} to mean that
for any finite sequence $\bar n \in \omega^{l+1}$ and pairwise distinct $g_1, \hdots, g_{l}\in W_A$,
the set of $m$ such that 
\begin{align*}
m &\in P_{\bar n(0)},\\
\rho(g_1)(m) &\in P_{\bar n(1)},\\
\vdots\\
\rho(g_{l})(m) &\in P_{\bar n(l)},
\end{align*}
is infinite.
\end{enumerate}
\end{definition}
\begin{assumption}\label{a.sufficiently.generic}
Let us assume throughout this section that $\rho : A \to S_\infty$ is a sufficiently generic cofinitary representation.
\end{assumption}
It will simplify coding substantially to consider only words without proper conjugated subwords or roots. 
This will the first step of the restriction described in Remark~\ref{r.subwords}, and as has been explained there, it will not endanger our goal of adjoining a $\Pi^1_2$ MCG. Compare also Remark~\ref{r.nr} below.
\begin{definition}[Computable coding, Part 2]\label{d.computable.coding.2}~
Let a sequence $\bar n \in \omega^{\leq\omega}$ be given. 
Suppose $\sigma$ is a partial function from $\omega$ to $\omega$, and $w\in \Wncs\cap\Wnr$. 

\begin{enumerate}%
\item
We say $(w,\sigma)$ \emph{precodes} $\bar n$ \emph{with parameter} $m$ if and only if  
\begin{equation}\label{e.code}
(\forall k < \dom(\bar n)) \; w^{3(k+1)}[\sigma](m) \in P_{\bar n(k)}.
\end{equation}
The factor $3$ in the exponent helps to ensure we have sufficient opportunity to allow paths for different words to diverge, in the density argument showing that it is forced that $w[\sigma^G]$ codes $z^w$ (see Lemmas~\ref{l.computable.coding} and \ref{lemma.generic.coding}).

\item\label{i.exactly} Suppose now that $\dom(\bar n) = k$. 
We say that  $(w,\sigma)$  \emph{exactly precodes} $\bar n$
\emph{with parameter} $m$ if $k <\omega$, $(w,\sigma)$ precodes $\bar n$ 
and in addition
\[
a^{i^w_R} g^w_R w^{3k}[\sigma](m)\text{  is undefined,}
\]
recalling that $i^w_R$ is the exponent of the right-most occurrence of $a$ or $a^{-1}$ in $w$.
In other words, $(w,\sigma)$ exactly precodes $\bar n$ if the path of $m$ under $(w,\sigma)$ is of minimal possible length under the requirement that it precodes $\bar n$. 
Note that the precise form of the above definition depends on our assumption that $w\in \Wncs$.
Like the factor $3$ above, this notion will help separate paths for different words in the density argument showing that it is forced that $w[\sigma^G]$ codes $z^w$.

\item\label{i.critical} We say that $m'$ is \emph{the critical point in the path of $m$ under $(w,\sigma)$} if for some $k\in\omega$,
\[
m' = (g^w_L a^{i^w_L})^{-1}w^{3(k+1)}[\sigma](m) 
\]
recalling that $i^w_L$ is the exponent of the left-most occurrence of $a$ or $a^{-1}$ in $w$.
We define this terminology because, when extending $\sigma$ so that the path of $m$ increases in length with the purpose of achieving exact precoding of a given $\bar n$, it is precisely at critical points that exact precoding imposes a non-trivial requirement for this extension. 

\item We say that $m'$ is a \emph{coding point} in the path of $m$ under $(w,\sigma)$ if 
$m' =  w^{3k}[\sigma](m)$ for some $k\in\omega$ (although for $k=0$, nothing is coded; we take the view that the trivial, i.e., empty sequence is precoded in this case).

\item\label{d.S} Fix a bijection $S\colon \omega \to 2^{<\omega}$.
Suppose $z \subseteq \omega$ and $l\in \omega+1$. We say \emph{$(w,s)$ codes (resp., exactly codes) $z$ up to $l$ with parameter $m$} if and only if there is a sequence $\bar n$ such that $(w,s)$ precodes (resp., exactly precodes) $\bar n$ with parameter $m$ and
\[
\chi_z \res l = \bigcup\big\{S\big(\bar n(i)\big) : i<\dom(\bar n)\big\},
\]
where $\chi_z$ denotes the characteristic function of $z$.
We just say \emph{$(w,s)$ codes $z$ (with parameter $m$)} if $(w,s)$ codes $z$ up to $\omega$ (with $m$; if we don't mention $m$, the phrase is understood to mean ``with some parameter $m$'').
\end{enumerate}
\end{definition}
\begin{remark}\label{r.nr}
We restrict attention to words without proper conjugate subwords (see Notation~\ref{n.words}) to simplify the
form of \eqref{i.exactly} and \eqref{i.critical} in the above definition, as well as the argument of Lemma~\ref{l.computable.coding} below.
We moreover restrict to words \emph{without roots} (again, see Notation~\ref{n.words}).
This avoids conflicting coding requirements arising from the demands that $w[\sigma^G]$ code $z^w$ and at the same time, $w^m[\sigma^G]$ code $z^{w^m}$ for some $m>1$; such conflicts are avoided by never demanding the latter.
This is crucial in the proof Lemma~\ref{l.computable.coding} below.
As has been mentioned, neither restriction thwarts our goal of constructing a $\Pi^1_2$ MCG (see  Lemma~\ref{l.Pi-1-2} and Remark~\ref{r.subwords}).
\end{remark}

Next, we prove the crucial lemma for the computable coding of $z^w$ by $w[\sigma^G]$.
This lemma is connected to the fact that given any finite (or even countable) partial assignment $w \mapsto z^w$ from $\Wnr\cap\Wncs$ to $\powerset(\omega)$, the set of permutations $\sigma\in S_\infty$ such that $w[\sigma]$ codes $z^w$ whenever $z^w$ is defined, %
is comeager.
The lemma will be the basis of the density argument in Lemma~\ref{lemma.generic.coding} below.

\begin{lemma}\label{l.computable.coding}
Suppose we have a finite set of words $W\subseteq \Wnr\cap\Wncs$, and assignments $w \mapsto z^w \in \powerset(\omega)$ and $w \mapsto m^w \in \omega$  for $w\in W$. 
Suppose further we have a finite, partial, and injective map $s \colon \omega \rightharpoonup \omega$ 
so that for each $w\in W$, $(w,s)$ exactly codes $z^w$ up to some finite length with parameter $m^w$.

Then given any $l' \in \omega$, we may find a finite, partial, and injective map $s' \colon \omega \rightharpoonup \omega$ such that $s \subseteq s'$ and for each $w\in W$, $(w,s')$ exactly codes $z^w$ up to some (finite) length $l$ with $l\geq l'$.
\end{lemma}
\begin{proof}
Let $s$, $W$, $w \mapsto z^w \in \powerset(\omega)$ and $w \mapsto m^w \in \omega$ for $w\in W$, and $l'\in\omega$ be given as in the lemma.
For $w\in W$, call the path of $m^w$ under $(w,s)$ the \emph{coding path} for $(w,s)$. 
Write $m^w_{\textup{tm}}$ for the terminal value in the coding path for $(w,s)$
and let 
$M=\{m^w_{\textup{tm}} : w \in W\}$.
Fix some $m^*_0 \in M$.
It suffices to find $s'$ such that for each $w \in W$,
$(w,s')$ exactly codes $z^w \res l'$ if $m^w_{\textup{tm}} = m^*_0$, and the coding path of $(w,s')$ equals the coding path of $(w,s)$ (and thus $(w,s)$ still exactly codes an initial segment of $z^l$) 
if $m^w_{\textup{tm}} \neq m^*_0$.
The argument may then be repeated for all elements of $M$.

For each $w\in W$ with $m^w_{\textup{tm}} = m^*_0$, define
\[
w^*(w) = (a^{i^w_L} g^w_L)^{-1}w^3 (g^w_R)^{-1},
\]
that is, $w^*(w)$ is the word whose intepretation under $s'$, once defined, will take $m^*_0$ to the next critical point in the coding path of $(w,s')$.

Consider the set
\[
T =\big\{w \in W_{A,a} : (\exists w' \in W)\;m^w_{\textup{tm}} = m^*_0 \land w^*(w') \sqsupseteq w\},
\]
which forms a tree (with root $\emptyset$) under $\sqsupseteq$ (defined on p.~\pageref{d.sqsupseteq}).
We will define $s_0, s_1, \hdots, s_k$ by finite induction, with $s_0=s$, and so that $s_i \subseteq s_{i+1}$ for each $i<k$; $s'$ will be $s_k$. 
In the course of our finite induction, we will deal with all words from $T$, shorter words first.
So let $\la w_i : i<k\ra$ enumerate $T$ so that $i<j \Rightarrow \lh(w_i) \leq \lh(w_j)$.

We shall take care to ensure the following properties in our induction: 
\begin{induction}\label{IH}~
\begin{enumerate}[(P1)]
\item\label{P.injective}\label{P.first}
If $w',w'' \in T$ are distinct and 
$m^*_0 \in \dom(w'[s_i])\cap \dom(w''[s_i])$, 
then 
\[
w'[s_i](m^*_0) \neq w''[s_i](m^*_0);
\]
\item\label{P.m^*_0} For any $w \in W$ such that $m^{w}_{\textup{tm}} \neq m^*_0$ 
and any $w' \in T\setminus\{\emptyset\}$ such that 
$m^*_0 \in \dom(w'[s_i])$
\[
w'[s_i](m^*_0) \notin \set(w,s_i,m^w).
\]
\item\label{P.path.identical.W}\label{P.last} For any $w \in W$ such that $m^{w}_{\textup{tm}} \neq m^*_0$, 
the coding path for $(w,s_i)$ remains identical to the coding path for $(w,s_0)$.
\end{enumerate}
\end{induction}
For $i=0$, \ref{P.injective}--\ref{P.last} already hold because then, $s_0=s$ and the only $w\in T$ such that $m^*_0 \in \dom(w[s_i])$ is the empty words, $w=\emptyset$.

For the inductive step, suppose we already have constructed $s_i$ and that \ref{P.injective}--\ref{P.last} hold. 
If $m^*_0 \in \dom(w_i[s_i])$, we can let $s_{i+1}=s_i$, so assume otherwise.
We can assume by induction that $w_i \in \{aw, a^{-1}w\}$ with $m^*_0 \in \dom(w[s_i])$.
We shall assume for simplicity that 
\[
w_i = aw,
\] 
and leave the case $w_i=a^{-1}w$ to the reader.
Let
\[
m=w [s_i]\big(m^*_0).
\]
We will define $s_{i+1}=s_i\{(m,m')\}$, where $m'$ is an arbitrary element of $\omega$ satisfying the following requirements.

\begin{enumerate}[(R1)]
\item\label{r.comp.first} For each $g \in \mathbb{F}(A)$ (including $g=\emptyset$) such that $gaw \in T$, and for each $w' \in W$, 
\[
\rho(g)(m') \notin \set(m^{w'},w',s_i) \cup\dom(s)\cup\ran(s).
\]
\item\label{r.comp.g} For any $g_0,g_1 \in \mathbb{F}(A)$ such that $g_0aw \in T$, $g_1 aw \in T$, and $g_0\neq g_1$ it holds that $g_0(m')\neq g_1(m')$.

\item\label{r.comp.last} For any $w' \in W$  such that
$m$ is a critical point in the coding path for $(w',s_i)$ and $a^{i^{w}_L}=a$
it holds that
\[
\rho\big(g^{w'}_L\big)(m') \in P_{S(\chi \res l')}
\]
where $\chi$ is the characteristic function of $z^w$ (see Definition~\ref{d.computable.coding.1}\eqref{d.partition},  p.~\pageref{d.partition} for $P_n$ and Definition~\ref{d.computable.coding.2}\eqref{d.S}, p.~\pageref{d.S} for $S$).
\end{enumerate}
\begin{claim}\label{c.inf.m'}
There are infinitely many $m'\in\omega$ sarisfying \ref{r.comp.first}--\ref{r.comp.last} above. 
\end{claim}
Let us postpone the proof of the claim and first show how the lemma follows, asuming the claim.
Then since we may find $m'$ satisfying \ref{r.comp.first}--\ref{r.comp.last}, we have succeded in defining $s_{i+1}$.

Note that the coding path of $(w_i,s_{i+1})$ is longer by at most one application of $a$ or $a^{-1}$ (but not both) than the coding path of $(w_i,s_i)$. Moreover the coding path of any $(w_j,s_{i+1})$ is either the same as that of $(w_i,s_{i+1})$, this being the case when $w_j\sqsupseteq w_i$; or otherwise it is identical to the coding path of $(w_j,s_i)$, i.e., extending $s_i$ to $s_{i+1}$ leaves this path unchanged.
The properties \ref{P.first}--\ref{P.last} listed in~\ref{IH} are preserved due to \ref{r.comp.first} and \ref{r.comp.g}. 

This finishes the inductive step in our construction of $s_0, \hdots, s_k$, and thus, of $s' =s_k$.
By \ref{r.comp.last} and by construction, $(w,s')$ exactly codes $z^w$ up to $l'$ for each $w\in W$ with $m^w_{\textup{tm}}= m^*_0$.
By \ref{P.path.identical.W} the coding path of $(w,s')$ equals the coding path of $(w,s)$ for each $w\in W$ with $m^w_{\textup{tm}}\neq m^*_0$.

\medskip

To finish the proof of Lemma~\ref{l.computable.coding}, it remains to prove Claim~\ref{c.inf.m'}.
\begin{proof}[Proof of Claim~\ref{c.inf.m'}]
It is clear that \ref{r.comp.first} and \ref{r.comp.g} only exclude finitely many $m'$ (in the case of \ref{r.comp.g} we use that $\rho$ is a cofinitary representation). 
It remains to show that there are infinitely many $m'$ satisfying \ref{r.comp.last}.
The problem is to show that when $m$ is critical in the path for several words from $W$, none the requirements are in conflict.

To see this we need another claim:

\begin{claim}\label{e.g_L.differ}
If $w', w'' \in W$ are distinct and $m$ is a critical point in the coding paths both for $(w',s_i)$ and $(w'',s_i)$, then $g^{w'}_L \neq g^{w''}_L$. 
\end{claim}
\begin{proof}[Proof of Claim~\ref{e.g_L.differ}]
Fix $w'$ and $w''$ as in the claim. 
Since by assumption both $(w',s)$ and $(w'',s)$ also exactly precode some finite sequence, 
the last coding point in the coding path of $(w',s)$ (resp., of $(w'',s)$) must be $(g^{w'}_R)^{-1} (m^{w'}_{\textup{tm}})$ (resp., $(g^{w''}_R)^{-1} (m^{w''}_{\textup{tm}})$). 
By definition of exact coding and of critical points, for some $k',k'' \in \omega\setminus\{0\}$ we have
\begin{align*}
(g^{w'}_L a^{i^{w'}_L})^{-1}(w')^{3k'} (g^{w'}_R)^{-1}[s_i] (m^{w'}_{\textup{tm}})=&m,\\
(g^{w''}_L a^{i^{w''}_L})^{-1}(w'')^{3k''}(g^{w''}_R)^{-1}[s_i] (m^{w''}_{\textup{tm}})=&m.
\end{align*}
By \ref{P.injective}--\ref{P.path.identical.W}, this is only possible if $m^{w'}_{\textup{tm}} = m^{w'}_{\textup{tm}} = m^*_0$ and both words on the left of the above equations are equal to $w_i$ 
and therefore 
\begin{equation}\label{e.words.equal}
(g^{w'}_L a^{i^{w'}_L})^{-1}(w')^{3k'} (g^{w'}_R)^{-1}= (g^{w''}_L a^{i^{w''}_L})^{-1}(w'')^{3k''}(g^{w''}_R)^{-1}.
\end{equation}
By \eqref{e.words.equal}, $(w')^{3k'}$ and $(w'')^{3k''}$ agree as in the hypothesis of Lemma~\ref{l.combinatorial} on an interval $[a_0,b_0]$ of length 
at least $2\cdot\max\big(\lh(w'),\lh(w'')\big)$. %
It follows by Lemma~\ref{l.combinatorial} that there are $v', v''$ and $\tilde k', \tilde k''$ such that
$w'= (v')^{\tilde k'}$ and $w''= (v'')^{\tilde k''}$ and %
$v''$ is a cyclic shift of $v'$ with an offset of $\lh(g^{w'}_R) - \lh(g^{w''}_R)$.
It also follows that $(w')^{k'}$ is a cyclic shift of $(w'')^{k''}$ by the same offset,
i.e., $i^{w'}_L=i^{w''}_L$ and
\begin{equation*}%
g^{w'}_R g^{w'}_L = g^{w''}_R g^{w''}_L.
\end{equation*}
If $\lh(w') \neq \lh(w'')$, it follows from $\lh(v')=\operatorname{gcd}\big(\lh(w'),\lh(w'')\big)$ that  then $v'$ is a \emph{proper} subword of $w'$ or $v''$ is a \emph{proper} subword of $w''$, in contradiction to our assumption that $w',w'' \notin\Wnr$.
Thus, $\lh(w') = \lh(w'')$ and $w'$ is a cyclic shift of $w''$
with an offset of $\lh(g^{w'}_R) - \lh(g^{w''}_R)$.
As $w' \neq w''$ by assumption, we conclude $g^{w'}_L \neq g^{w''}_L$.
\renewcommand{\qedsymbol}{{\tiny Claim~\ref{e.g_L.differ}.} $\Box$}
\end{proof} 
Using this second claim, we are able to finish the proof of Claim~\ref{c.inf.m'}:
Writing $W^m$ for the set of $w\in W$ such that
$m^w_{\textup{tm}} = m^*_0$
and
$m$ is a critical point in the coding path for $(w',s_i)$, by Claim~\ref{e.g_L.differ} the map 
\begin{gather*}
W^m \to W_A,\\
w' \mapsto g^{w'}_L
\end{gather*}
is injective.
Since by Assumption~\ref{a.sufficiently.generic}, $\rho$ is sufficiently generic, there are infinitely many $m'$ satisfying
\[
\rho\big(g^{w'}_L(m')\big) \in P_{S(z^{w'}\cap l')}.
\]
for each $w' \in W^m$. Hence there are infinitely many $m'$ satisfying \ref{r.comp.last}. 
\renewcommand{\qedsymbol}{{\tiny Claim~\ref{c.inf.m'}.} $\Box$}
\end{proof} 
As we have argued above just after the statement of Claim~\ref{c.inf.m'}, this proves the lemma.
\renewcommand{\qedsymbol}{{\tiny Lemma~\ref{l.computable.coding}.} $\Box$}
\end{proof}

\subsection{The forcing}

It is high time we define the forcing
$\QQ$, as promised at the beginning of this section.
To this end let us suppose that we are given
\begin{itemize}

\item A sufficiently generic cofinitary representation $\rho\colon A \to S_\infty$ (see the beginning of Section~\ref{s.finite.partial} and Definition~\ref{d.computable.coding.1}\eqref{i.sufficiently.generic})
and a ``new'' index $a$, i.e., $\{a,a^{-1}\}\cap A=\emptyset$,

\item $\mathcal{F}=\{f_{m,\xi}:m\in\omega,\xi\in\omega_1\}$, an %
almost disjoint family of permutations (i.e., the graphs are pairwise almost disjoint subsets of $\omega\times\omega$)
so that $ f_{m,\xi} \notin \la\im(\rho)\ra$ and $\la \im(\rho), f_{m,\xi}\ra$ is cofinitary for each $m\in\omega,\xi\in\omega_1$.

\item For each $w \in \Wncs$, a family  $\mathcal{Y}^w=\{Y^w_m:m\in\omega\}$ of subsets of $\omega_1$, 

\item For each $w \in \Wncs$, a subset $z^w$ of $\omega$.
 
\end{itemize}
Write $\mathcal Y$ for $\la \mathcal{Y}^w : w \in \Wncs\ra$, and $\bar z$ for $\la z^w : w \in \Wncs\ra$.
(We will not give concrete values to $\rho$, $\mathcal F$, $\mathcal Y$, and $\bar z$ until Section~\ref{s.iteration}; see in particular the roadmap at the beginning of said section, and Section~\ref{subsection.group}. See the beginning of Section~\ref{section.group.forcing} for an informal discussion of $\rho$, $\mathcal F$, $\mathcal Y$, and $\bar z$.)

We define a forcing, denoted $\QQ^{\mathcal{F},\mathcal{Y},\bar z}_{\rho,\{a\}}$, which adjoins a generic permutation $\sigma^G$ such that the mapping from $A\cup\{a\}$ to $S_\infty$ which
extends $\rho$ and sends $a$ to $\sigma^G$ 
is a cofinitary representation; moreover, 
\begin{itemize}
\item for each $w \in \Wncs\cap \Wnr$  the permutation $w[\sigma^G]$ codes (in the sense of Definition~\ref{d.computable.coding.2}) the real $z^w$,
\item for each $w \in \Wncs$ and $m\in \psi[\sigma^G]$, $w[\sigma^G]$ almost disjointly via the family $\mathcal{F}^m=\{f_{m,\xi}: \xi\in\omega_1\}$ codes $Y^w_m$.
\end{itemize}

A condition in $\QQ=\QQ^{\mathcal{F},\mathcal{Y}, \bar z}_{\rho,\{a\}}$ 
is a tuple $p=\la s^p, F^p, \bar m^p, s^{p,*}\ra$ such that: 

\begin{enumerate}[label=(\Alph*),ref=\Alph*]
\item $s^p$ is an injective finite partial function from $\omega$ to $\omega$;
\item $F^p$ is a finite subset of $\Wncs$;
\item $\bar m^p = \la m^p_w:w\in \dom(\bar m^p)\ra$ where $\dom(\bar m^p)$ is a finite subset of $\Wncs\cap\Wnr$ and $m^p_w\in \omega$ for each $w\in \dom(\bar m^p)$;
\item $s^{p,*} = \la s^{p,*}_w:w\in \dom(s^{p,*})\ra$ where $\dom(s^{p,*})$ is a finite subset of $\Wncs\cap\Wnr$ and for each $w$
$$s^{p,*}_w\in [\{f_{m,\xi}: m\in \psi[w[s]],\xi\in Y^w_m\}]^{<\omega};$$

\item\label{Q.exact} For each $w \in \dom(\bar m^p)$ there exists a (unique) $l$ which we denote by $l^p_w$ such that $(w,s)$ exactly codes $z^w$ up to $l$ with parameter $m^p_w$.
\end{enumerate}
The extension relation for $\QQ$ is defined as follows: For
$q=\la s^q,F^q,\bar{m}^q, s^{q,*}\ra$ and $p=\la s^p,F^p,\bar{m}^p, s^{p,*}\ra$, we let $q \leq p$ if and only if all of the following hold:
\begin{enumerate}[(a)]
\item $s^q$ end-extends $s^p$, $F^q\supseteq F^p$,
\item\label{Q.order.fixpoints} For every $w\in F^p$,
$\hbox{fix}(w[s^q])=\fix(w'[s^p])$,

\item\label{Q.order.F} $s^{q,*}\supseteq s^{p,*}$, and for all $w\in\dom(s^{p,*})$ and  $f\in s^{p,*}_w$ we have 
$$\big(w[s^q]\backslash w[s^p]\big)\cap f=\emptyset,$$
\item $\bar{m}^q\supseteq \bar{m}^p$.
\end{enumerate}

\begin{remark}\label{r.order}
Note that the definition of $\leq$ is simpler than, e.g., in \cite{Zhang}. This is made possible by 
Lemma~\ref{l.csw}.
\end{remark}

\begin{prop}\label{prop.group.forcing}
Let $G$ be a $\QQ$-generic filter and let
$$\sigma^G=\bigcup\{s:\exists F,\bar{m}, s^*\hbox{ s.t. }\la s,F,\bar{m}, s^*\ra\in G\}.$$
The permutation $\sigma^G$ has the following properties:

\begin{enumerate}[label=(\Roman*)]
\item\label{i.A} The group $\la \im(\rho )\cup\{\sigma^G \}\ra$ is cofinitary.

\item\label{i.B} If $f$ is a ground model permutation, $f\notin \la\im(\rho)\ra$,  $\la \{f\}\cup\im(\rho)\ra$ is cofinitary 
 and $f$ is not covered by finitely many permutations in $\mathcal{F}$, then there are infinitely many $n$ such that
$f(n)=\sigma^G(n)$ and so $\la\im(\rho)\cup\{\sigma^G\}\cup\{f\}\ra$ is not cofinitary;

\item\label{i.C} For each $w\in \Wncs\cap\Wnr$, 
$w[\sigma^G]$ codes $z^w$;

\item\label{i.D} For each $w\in \Wncs$, for all $m\in\psi[w[\sigma^G ]]$, and for all $\xi\in\omega_1$
$$|w[\sigma^G ]\cap f_{m,\xi}|<\omega\hbox{ iff }\xi\in Y^w_{m},$$
that is, $w[\sigma^G ]$ codes $Y^w_{m}$ using the almost disjoint family $\mathcal F^m=\{f_{m,\xi}: \xi\in\omega_1\}$. 
\item\label{p.sufficiently.generic} The cofinitary representation $\rho^G \colon A\cup\{a\}\to S_\infty$ given by $\rho^G=\rho\cup\{(a,\sigma^G)\}$ 
is sufficiently generic.
\end{enumerate}
Moreover the forcing $\QQ$ has the Knaster property.
\end{prop}

We shall now show these properties to hold, in a series of lemmas.
The first lemma serves to show Property \ref{i.C}.
\begin{lemma}[Generic Coding]\label{lemma.generic.coding}
For any $w \in \Wa$ and any $l \in \N$, let $D^{\textup{code}}_{w,l}$ denote the set of $q\in\Q$ such that $w \in \dom(\bar m^q)$ and for some $l' \geq l$,
$q$ exactly codes $z^w$ up to $l'$ with parameter $\bar m^q_w$. Then $D^{\textup{code}}_{w,l}$ is dense in $\Q$.
\end{lemma}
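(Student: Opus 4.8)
The plan is to prove density directly: fix $p\in\Q$; we produce $q\leq p$ with $q\in D^{\textup{code}}_{w,l}$. First, a few harmless reductions. We may assume $w\in\mathrm{WS}$ — if $w\in\mathrm{WD}$ has a proper conjugate subword, replace it by its ``core'' in $\mathrm{WS}$ and use that $w[\sigma]$ is conjugate to the core evaluated at $\sigma$, so that the coding properties transfer. We may also assume $w\in F^p$ (hence all its subwords are in $F^p$), since enlarging $F^p$ without changing $s^p,\bar m^p,s^{p,*}$ yields a condition below $p$. If $w\in\dom(\bar m^p)$ and $l^p_w\geq l$ we are already done; so assume otherwise and set $l_0:=l^p_w$, $m:=m^p_w$ if $w\in\dom(\bar m^p)$, and otherwise pick $m$ so large that $m$ exceeds every value occurring in $\dom(s^p)\cup\ran(s^p)$, in the finitely many evaluation paths currently witnessing clause~\eqref{Q.exact} for words in $\dom(\bar m^p)$, and in the finitely many constraints coming from $s^{p,*}$. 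Then $(w,s^p)$ exactly codes $z^w\restriction 0=\emptyset$ with parameter $m$ (the path of $m$ under $w[s^p]$ dies at once inside $w_0$, since the leftmost letter of $w_0$ is $a^{\pm 1}$ and $s^p$ is undefined there), so we may add $w\mapsto m$ to $\bar m^p$ and put $l_0:=0$; this new condition is below $p$ and in $\Q$.

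The heart of the argument is a \emph{one-bit extension}: given $q'\leq p$ with $w\in\dom(\bar m^{q'})$, $m^{q'}_w=m$, and $(w,s^{q'})$ exactly coding $z^w\restriction j$ with parameter $m$ (some $j\geq l_0$), we build $q''\leq q'$ with the same data except that $(w,s^{q''})$ now exactly codes $z^w\restriction(j+1)$. To do this we ``run'' the evaluation path of $m$ under $w$ forward from where it currently dies: at present it completes $S(w)\cdot j$ loops and then dies inside $w_0$, and we extend $s^{q'}$, one pair at a time, so as to push it through $S(w)$ more loops. At each step along the path applying $a$ or $a^{-1}$ to a value not yet in the relevant domain we are free to choose the image; we take it to be a \emph{fresh} natural number, large enough that it and all its images under the finitely many subwords of $w$ avoid the current (finite) partial function and the finitely many graphs in $s^{q',*}$. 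The sole exception is the step completing loop $S(w)(j+1)$: there we must arrange that the $j$-th coded value has parity $\equiv z^w(j)\pmod 2$, as demanded by~\eqref{e.code}. This is possible because, after the last application of $a^{\pm 1}$ within that loop, only letters from $\dom(\rho)$ are applied — i.e., a fixed ground-model permutation word; since such a word is a bijection, the set of fresh values at that $a^{\pm 1}$-step producing a loop-endpoint which is both large and of the prescribed parity is still infinite. Having coded bit $j$, we \emph{stop} extending $s$: the next loop begins with $w_0$, whose leftmost letter is $a^{\pm 1}$ and whose $a$-value we have left undefined, so the path dies as early as possible, i.e.\ $(w,s^{q''})$ exactly codes $z^w\restriction(j+1)$.

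It remains to check $q''\in\Q$ and $q''\leq q'$ ($\leq p$). Clauses \eqref{Q.first}--\eqref{Q.last} and parts (A), (C), (D) of the ordering are immediate: $s^{q''}$ end-extends $s^{q'}$ because the new domain points were chosen large; the new pairs avoid every $f\in s^{q',*}$ because the new values are fresh; and $\bar m$ and $s^*$ are untouched on their old parts. Clause~\eqref{Q.exact} holds for $w$ by construction, and for the other words in $\dom(\bar m^{q''})=\dom(\bar m^{q'})$ because $s$ was only end-extended and their witnessing paths lie entirely below the new values. The one genuinely delicate point is part (B) of the ordering — that no new fixed point of any $w'\in F^{q'}$ is created: here one argues, by induction along the construction exactly as in \cite[Lemma~3.12]{VFDSAT}, that every value appended to the path differs from all values occurring earlier, so that the affected orbits are simple, non-returning paths; hence no new fixed points arise and (B) holds vacuously on the new values. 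Finally, iterating the one-bit extension from $j=l_0$ up to $j=l-1$ yields $q:=q_l\leq\dots\leq q_{l_0}\leq p$ with $w\in\dom(\bar m^q)$ and $(w,s^q)$ exactly coding $z^w\restriction l$, so $q\in D^{\textup{code}}_{w,l}$.

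I expect the main obstacle to be precisely the bookkeeping in the one-bit extension: one must keep every newly introduced value ``generic enough'' — large, pairwise distinct, and avoiding the finitely many pre-existing constraints — so that (B) is preserved and the path neither accidentally closes up nor collides with $s^{q',*}$, while still retaining, at the single coding step, enough freedom to pin down the parity of the loop-endpoint, and while being able to terminate the path exactly inside the next copy of $w_0$. Making these demands coexist is the crux; what makes it work is the structural fact that the fragments of $w$ over $\dom(\rho)$ evaluate to ground-model bijections, which neither collapse parity classes nor map cofinite sets to non-cofinite ones.
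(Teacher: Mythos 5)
There is a genuine gap, and it is exactly at the point you wave through: your claim that clause \eqref{Q.exact} ``holds for the other words in $\dom(\bar m^{q''})=\dom(\bar m^{q'})$ because $s$ was only end-extended and their witnessing paths lie entirely below the new values'' is false. The first extension you are forced to make is at the old value $m_0$ where the path of $m$ under $w$ currently dies, and you have no freedom there. If some other word $w'\in\dom(\bar m^{q'})$ has its coding path terminating at the \emph{same} value $m_0$ with the same next letter $a^{\pm1}$ (which freshness of the newly chosen values cannot rule out, since $m_0$ is an old value and distinct words' paths can merge or even share the parameter), then defining $s$ at $m_0$ drags the path of $w'$ past its exact-coding termination point. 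Your construction then stops as soon as $w$'s new bit is coded, leaving $w'$'s path dangling in the middle of a loop, so $(w',s^{q''})$ exactly codes no initial segment of $z^{w'}$ at all and $q''$ fails clause \eqref{Q.exact}, i.e.\ it is not even a condition. This is precisely why the paper's proof introduces the set $W_0$ of all words whose paths terminate at $m_0$ with the same next letter, and extends along the tree $T=\bigcup_{w'\in W_0}W(w')$ of initial segments of $(w')^{S(w')}$, carrying \emph{every} word of $W_0$ through a full additional cycle and meeting each of their parity requirements.

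Once you do carry all of $W_0$ along, a second issue appears that your plan never confronts: at a single new value $m''$ several words $w^*\in W_0$ could in principle impose conflicting parity constraints. The paper resolves this with Claim~\ref{claim.one.word} (at most one $w^*$ has a critical point at the current step), and that claim is where the seemingly technical stipulations on $S$ — that $S(w)>1$ and that $S$ is strictly increasing in word length — together with the path-separation requirements \eqref{r.avoid.morefixedpoints} and \eqref{r.avoid.paths} are actually used. Your argument never invokes these hypotheses, which is a reliable sign the multi-word interaction has been lost. The rest of your outline (choice of a large parameter when $w\notin\dom(\bar m^p)$, fresh values plus a single parity adjustment using that the $\im(\rho)$-segments are ground-model bijections, and the preservation of clause (B) along the lines of Claims~\ref{claim.no.new.paths} and \ref{claim.no.new.fixed.points}) matches the paper, but without the $W_0$/tree mechanism and the uniqueness claim the one-bit extension does not produce a condition, so the density argument as proposed does not go through.
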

\begin{proof}
Suppose $p\in\QQ$ and $w\in \Wncs\cap\Wnr$ are given.
If $w \notin \dom(\bar m^p)$ 
it is clear that we can choose $m$ large enough so that letting 
\[
q = \la s^p, F^p, \bar m^p \cup\{(w,m)\},s^{p,*}\ra
\]
we obtain a condition $q \in \QQ$ with $l^q_w = 0$ (namely, chose $m$ large enough so that $(w,s^p)$ exactly precodes the trivial sequence $\bar n=\emptyset$ with parameter $m$).

So suppose $w \in \dom(\bar m^p)$. 
We find $s' \supseteq s^p$ such that letting
\[
q = \la s', F^p, \bar m^p,s^{p,*}\ra
\]
we obtain a condition $q \in \QQ$ with $l^q_w = l'$. 
To this end, construct $s'$ as in the proof of Lemma~\ref{l.computable.coding} with $W=\dom(\bar m^p)$ and the assignment given by $\bar m^p$, making the following adaptation:
Each time we extend $s_i$ to $s_{i+1} = s_i \cup\{(m,m')\}$, choose $m'$ satisfying  
\ref{r.comp.first}--\ref{r.comp.last} and in addition, require  
\[
m' \notin \bigcup_{w \in F^p} E^w_{s_i,m}, 
\]
where $E^w_{s_i,m}$ is defined as in \eqref{e.domain.ext.cont}.
(Such $m'$ exists since the set of $m'$ satisfying \ref{r.comp.first}--\ref{r.comp.last} is infinite and each $E^w_{s_i,m}$ is finite.) 
Then by the proof of Lemma~\ref{l.computable.coding}, $w[s']$ will exaclty code $z^w$ up to $l'$ for each $w\in \dom(\bar m^p)$; and by Lemma~\ref{l.csw}, $\fix(w[s'])=\fix(w[s])$ for each $w\in F^p$ and so $q\leq p$.
\end{proof}

The next lemma shows that $\sigma^G$ is a permutation of $\omega$. 
\begin{lemma}\label{lemma.bijective}
For each $n\in\omega$  the sets
$D_n=\{q\in\QQ: n\in\dom(s^q)\}$ and $D^n=\{q\in\QQ:n\in\ran(s^q)\}$
are dense in $\QQ$.
\end{lemma}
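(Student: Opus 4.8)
The plan is to establish density of $D_n$; the set $D^n$ is then handled by the symmetric argument, reading the whole construction with $a^{-1}$ in place of $a$ (so that one adjoins a pair $(n',n)$ rather than $(n,n')$, and appeals to Lemma~\ref{lemma.generic.coding} in the case where the relevant next letter is $a^{-1}$). Fix $p\in\QQ$ and $n\in\omega$. If $n\in\dom(s^p)$ there is nothing to do, so assume $n\notin\dom(s^p)$. Note first that, since every generator from $\im(\rho)$ acts totally, the path of $m^p_w$ under $(w,s^p)$, for $w\in\dom(\bar m^p)$, can terminate only when the next letter to be applied is $a$ or $a^{-1}$. I would distinguish two cases, according to whether $n$ sits at the terminal end of such a path with next letter $a$.

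\textbf{Case 1.} Some $w\in\dom(\bar m^p)$ is such that the path of $m^p_w$ under $(w,s^p)$ terminates at $n$ with next letter $a$. Here I would simply apply Lemma~\ref{lemma.generic.coding} to pass to a condition $q\leq p$ lying in $D^{\textup{code}}_{w,l^p_w+1}$. Then $m^q_w=m^p_w$ by clause~(D) of $\leq_0$, and $l^q_w>l^p_w$ by the defining property of $D^{\textup{code}}_{w,l^p_w+1}$ together with the uniqueness of $l^q_w$ in clause~\eqref{Q.exact}. Since $s^q\supseteq s^p$, the $(w,s^q)$-path of $m^p_w$ extends its $(w,s^p)$-path, and being strictly longer it must prolong past $n$; as the next letter there is $a$, this forces $n\in\dom(s^q)$, and $q$ is as required. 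In this case the Generic Coding Lemma does all the work.

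\textbf{Case 2.} No such $w$ exists. Now the idea is to adjoin a single pair $(n,n')$ with $n'$ chosen large and sufficiently generic. Let $E=\dom(s^p)\cup\ran(s^p)\cup\ran(\bar m^p)\cup\{n\}$ and let $F$ be the finite set of all subwords of circular shifts of words in $F^p$. I would pick $n'$ outside the following finite set of forbidden values:
\begin{itemize}
\item all of $\dom(s^p)\cup\ran(s^p)\cup\{n\}$, so that $s^p\cup\{(n,n')\}$ is again an injective finite partial function;
\item all $f(n)$ with $f\in\ran(s^{p,*})$, so that clause~(C) of $\leq_0$ survives;
\item for every $w\in\dom(\bar m^p)$ whose $(w,s^p)$-path of $m^p_w$ terminates with next letter $a^{-1}$, the terminal value of that path;
\item the finitely many values forbidden in clauses~\eqref{r.avoid.fixedpoints}, \eqref{r.avoid.morefixedpoints} and \eqref{r.avoid.paths} in the proof of Lemma~\ref{lemma.generic.coding}, read now with $s^p$ in place of $s'$ and with $E$ as above.
\end{itemize}
Setting $q=\la s^p\cup\{(n,n')\},\,F^p,\,\bar m^p,\,s^{p,*}\ra$, we have $n\in\dom(s^q)$ and $q$ extends $p$ coordinatewise, so it remains to check $q\in\QQ$ and $q\leq p$. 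That $q\in\QQ_0$ is routine: for clause~\eqref{Q.last} one uses that $w[s^q]\supseteq w[s^p]$ as partial functions, whence $\psi[w[s^q]]\supseteq\psi[w[s^p]]$. For $q\leq_0 p$, clauses~(A), (C), (D) are immediate from the choice of $n'$ and from $s^q\setminus s^p=\{(n,n')\}$, and clause~(B), that $q$ creates no essential new fixed point, follows from the argument of Claim~\ref{claim.no.new.fixed.points} read with $n,n'$ in the roles of $m',m''$ and $s^p,s^q$ in the roles of $s',s''$; the genericity imposed on $n'$ was chosen exactly so as to supply the hypotheses used there.

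The only point that still needs a genuine argument is the exact-coding clause~\eqref{Q.exact}. One checks that for every $w\in\dom(\bar m^p)$ the $(w,s^q)$-path of $m^p_w$ coincides with its $(w,s^p)$-path, so that $(w,s^q)$ exactly codes $\chi_{z^w}\restriction l^p_w$ with parameter $m^p_w$ and hence $l^q_w=l^p_w$ witnesses \eqref{Q.exact} for $q$. Indeed, since $s^q\supseteq s^p$ the two paths agree at least up to the terminal point $m_T$ of the $(w,s^p)$-path, and the $(w,s^q)$-path can continue past $m_T$ only if the unique next letter there, which is $a$ or $a^{-1}$, becomes applicable; but that would require $m_T=n$ if the letter is $a$, contrary to the Case~2 hypothesis, and $m_T=n'$ if the letter is $a^{-1}$, contrary to the third bullet above. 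Hence the $(w,s^q)$-path also terminates at $m_T$, so the whole path is unchanged, and \eqref{Q.exact} holds for $q$. The conceptual heart of the proof is therefore this case split: the only way a single adjoined pair can damage the delicate exact-coding requirement is by prolonging one of the coding paths at its terminal end, and the one genuinely new phenomenon, when $n$ is itself such a terminal end, is exactly the situation that Lemma~\ref{lemma.generic.coding} was designed to handle and can simply be quoted for.
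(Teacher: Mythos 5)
Your proof is correct and follows essentially the same route as the paper: split into the case where $n$ is the terminal value of a coding path (handled by quoting the Generic Coding Lemma~\ref{lemma.generic.coding}) and the case where one adjoins a generically chosen pair $(n,n')$ and verifies the extension relation by the arguments of Claims~\ref{claim.no.new.paths} and \ref{claim.no.new.fixed.points}, with $D^n$ left to symmetry. Your bookkeeping is in fact slightly more careful than the paper's (restricting the first case to next letter $a$ and absorbing the $a^{-1}$-terminal situation into the generic choice of $n'$), but this is a refinement of the same argument, not a different one.
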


\begin{proof}
To see $D_n$ is dense, let $p\in \QQ$ be given and find $q\in D_n$, $q \leq p$.
Assume that $n\notin \dom(s^p)$. 
If $n$ occurs as the terminal value in a coding path, the previous lemma applies.
Otherwise, find 
\[
n' \notin \bigcup_{w \in F^p} E^w_{s^p,m}, 
\]
where where $E^w_{s^p,m}$ is defined as in \eqref{e.domain.ext.cont}.
Let $s'=s^p\cup\{(n,n')\}$ and $q=\la s', F^p, \bar m^p,s^{p,*}\ra$.
Then $q \in \QQ$ and $q\leq p$ by Lemma~\ref{l.csw}.
The case $D^n$ is symmetrical and is left to the reader.
\end{proof}

Property \ref{i.A} above is established by the following lemma.
\begin{lemma}
For each $w\in W^*_{\mathbb{F}(A),a}$, the set 
\[
D_w=\{q\in\QQ: q\forces\left|\fix(w[\sigma^G])\right|<\infty\}
\]
is dense in $\QQ$.
\end{lemma}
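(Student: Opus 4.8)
The plan is to reduce to the case $w\in\mathrm{WS}$ and then, given an arbitrary $p\in\QQ$, to pass to an extension $q\leq p$ that differs from $p$ only by enlarging $F^p$ so as to contain $w$ and its subwords, and to argue that already $q$ forces $\fix(w[\sigma^G])$ to be finite. First I would handle the reductions. If $w$ contains no letter from $\{a,a^{-1}\}$ (and $w\neq\emptyset$), then $w[\sigma^G]=w[\rho]$ is a non-identity element of the cofinitary group $\mathcal G=\la\im(\rho)\ra$, so it has finitely many fixed points and every condition lies in $D_w$. Otherwise $w\in\mathrm{WD}$; if moreover $w\notin\mathrm{WS}$, then $w$ has a proper conjugate subword, say $w=u^{-1}w_1u$ with $u,w_1$ nonempty reduced words and $\lh(w_1)<\lh(w)$, and since $u[\sigma^G]$ is a permutation of $\omega$ by the previous lemma, $w[\sigma^G]$ is conjugate to $w_1[\sigma^G]$, whence $\bigl|\fix(w[\sigma^G])\bigr|=\bigl|\fix(w_1[\sigma^G])\bigr|$. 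So by induction on word length I am reduced to the case $w\notin\mathrm{WD}$ just treated, or to $w\in\mathrm{WS}$ --- here using that a word with no proper conjugate subword is cyclically reduced, so the induction terminates with a word that is either in $\mathrm{WS}$ or contains no $a^{\pm1}$ at all. So assume $w\in\mathrm{WS}$ from now on.

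Given $p\in\QQ$, I would set $q=\la s^p,\,F^q,\,\bar m^p,\,s^{p,*}\ra$ with $F^q=F^p\cup\{v\in\mathrm{WS}:v\text{ is a contiguous subword of }w\}$; this is a finite subset of $\mathrm{WS}$, closed under taking subwords and containing $w$, and since the remaining three coordinates agree with those of $p$, clauses (A)--(D) of the extension relation together with the exact-coding clause \eqref{Q.exact} are preserved trivially, so $q\in\QQ$ and $q\leq p$. The claim to prove is that $q$ forces $\bigl|\fix(w[\sigma^G])\bigr|\leq N$, where
\[
N=\bigl|\fix(w[s^q])\bigr|+\lh(w)^2\cdot\max\bigl\{\bigl|\fix(w'[s^q])\bigr|:w'\text{ a nonempty proper subword of }w\bigr\}.
\]
This $N$ is finite because for each nonempty proper subword $w'$ of $w$, either some $a^{\pm1}$ occurs in $w'$, in which case $w'[s^q]$ is a finite partial injection (its evaluation goes through the finite partial function $s^q$), or $w'[s^q]$ is the corresponding non-identity element of $\mathcal G$ (here using that $\rho$ is free), which has finitely many fixed points. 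Since $p$ was arbitrary, the claim gives density of $D_w$, and together with the previous lemma this establishes Property $(A)$ of Proposition~\ref{prop.group.forcing}.

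To prove the claim I would work in $V[G]$ for a generic $G\ni q$ and take $m\in\fix(w[\sigma^G])$. Its evaluation path under $w[\sigma^G]$ is a finite cycle using only finitely many values of $\sigma^G$, so $m\in\fix(w[s^r])$ for some $r\in G$ with $r\leq q$. Clause (B), applied to $r\leq q$ and $w\in F^q$, yields a nonempty subword $w'$ of $w$, written $w=w_1w'w_0$, with $m_k\in\fix(w'[s^q])$ where $k=\lh(w_0)$ and $\la\ldots,m_1,m_0\ra$ with $m_0=m$ is the $(w,s^r)$-path of $m$; reading off the remaining steps of this cyclic path gives $m=w_1[\sigma^G](m_k)$. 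If $w'=w$ then $w_1=w_0=\emptyset$ and $m=m_k\in\fix(w[s^q])$; otherwise $w'$ is a nonempty proper subword, $m_k$ lies in the finite set $\fix(w'[s^q])$, and $m$ is determined by the decomposition $w=w_1w'w_0$ (of which there are only finitely many) together with the value $m_k$, via the permutation $w_1[\sigma^G]$. Hence $\fix(w[\sigma^G])$ has at most $N$ elements, and since $G$ was arbitrary, $q\forces\bigl|\fix(w[\sigma^G])\bigr|\leq N$. The main obstacle is precisely this last step: recognising that clause (B) pins every fixed point of $w[\sigma^G]$ to a fixed point of a strictly shorter subword together with a decomposition of $w$, of which there are only finitely many. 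The reductions and the verification that $q\leq p$ are routine bookkeeping by comparison.
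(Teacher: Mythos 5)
Your proof is correct and follows essentially the same route as the paper: reduce to a word in $\mathrm{WS}$ by conjugation, enlarge $F^p$ so that it contains that word, and use clause (B) of the extension relation to pin every fixed point of $w[\sigma^G]$ to a fixed point of a subword evaluated at $s^q$. Your explicit count via the decompositions $w=w_1w'w_0$ merely spells out a detail the paper leaves implicit ("any fixed point must arise from a fixed point of $w'[s^q]$"), so the two arguments coincide in substance.
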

\begin{proof}
First note that $q\forces\lvert\fix(w[\sigma^G])\rvert<\infty$ if $w \in F^q$: 
This is because by the definition of the ordering on $\QQ$,
$q\forces \fix(w[\sigma^G]) = \fix (w[s^q])$.

Therefore clearly $D_w$ is dense, since we may always add the shortest conjugated subword $w$ of any word $w' \in \Wa$ to $F^q$ to form a new condition, and of course $\fix(w'[\sigma^G]) = \fix(w[\sigma^G])$.
\end{proof}
The next lemma shows Property \ref{i.B} above. Moreover,  Property \ref{i.D} is a direct corollary to this lemma and the almost disjoint requirement in the extension relation of our forcing. 
\begin{lemma}\label{l.generic.hitting}
Suppose we are given $m\in\omega$, $w\in \Wncs$ and $\tau \in S_\infty$.
\begin{enumerate}
\item If $\tau \notin \langle \im(\rho)\rangle$, $\langle \im(\rho), \tau\rangle$ is cofinitary, and $\tau$ is not covered by finitely many elements of $\mathcal{F}$, the set $D^{\textup{hit}}_{\tau,m} = \{q\in \Q\;\colon (\exists n\geq m) \; w[s^q](n)=\tau(n)\}$ is dense.

\item If $\tau \in \mathcal{F}$, $\tau = f^w_{n,\xi}$, and $\xi \notin Y^w_m$
then too is the set $D^{\textup{hit}}_{\tau,m}$ dense.

\item If $\tau \in \mathcal{F}$, $\tau = f^w_{n,\xi}$, and $\xi \in Y^w_m$
the set $D^{\textup{hit}}_{\tau,m}\cup\{ p \in \QQ : n \in \psi[w[s^p]]\}$ is dense in $\QQ$.
\end{enumerate}
\end{lemma}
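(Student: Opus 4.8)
The plan is to prove all three parts by one \emph{extend-and-route} argument, in the style of the proof of Lemma~\ref{lemma.generic.coding} (with a precursor in~\cite{VFDSAT}). Fix $p\in\QQ$; in each case I produce $q\le p$ in the displayed set. Since $w\in\mathrm{WS}\subseteq\mathrm{WD}$, the word $w$ contains at least one occurrence of $a^{\pm1}$. First, by applying Lemma~\ref{lemma.generic.coding} finitely often I may assume $p$ has been extended so that no value I am about to use lies on an active coding path of a word in $\dom(\bar m^p)$ --- the reduction already used at the start of the density argument for $D_n$ --- so that clause~\eqref{Q.exact} takes care of itself. I then pick a large witness point $N\ge m$ and build $s^q\supseteq s^p$ by tracing the evaluation path of $N$ under $w$: an application of a letter from $\im(\rho)$ is determined, while an application of $a^{\pm1}$ to a point not yet in $\dom(s^q)\cup\ran(s^q)$ forces a choice of fresh image, subject to finitely many exclusions of exactly the shape \eqref{r.avoid.fixedpoints}--\eqref{r.avoid.paths} (avoiding the current domain and range of $s$, the range of $\bar m^p$, and fixed points of subwords of circular shifts of words in $F^p$). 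The only non-fresh choice is one \emph{closing} pair, at the last-applied occurrence of $a^{\pm1}$ in $w$, whose value is forced by the requirement $w[s^q](N)=\tau(N)$. Setting $q=\langle s^q,F^p,\bar m^p,s^{p,*}\rangle$, the verification that $q\in\QQ$ and $q\le p$ is then word-for-word the analysis of Claims~\ref{claim.no.new.paths} and~\ref{claim.no.new.fixed.points}: the fresh choices create no new fixed points of words in $F^p$ and do not alter the evaluation paths of words in $\dom(\bar m^p)$, so clause~\eqref{Q.exact} and the ordering are preserved, and $N$ witnesses $q\in D^{\textup{hit}}_{\tau,m}$.

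The delicate point --- and what I expect to be the main obstacle --- is that the closing pair is \emph{forced}: its value is determined by $\tau(N)$ together with the $\im(\rho)$-letters of $w$ surrounding the relevant occurrence of $a^{\pm1}$, so $N$ must be chosen so that this pair still misses $\ran(s^p)$ (or $\dom(s^p)$) and, crucially, misses every permutation occurring in $s^{p,*}$, as the almost-disjointness clause~(C) of the ordering demands. Missing $\ran(s^p)$ costs only finitely many values of $N$, since $\tau$ is injective. For clause~(C) in part~(1): the closing constraint amounts to requiring that a fixed translate of $\tau$ by elements of $\langle\im(\rho)\rangle$, read off from $w$, not coincide with a given $f=f_{m',\xi'}\in s^{p,*}$ at the image of $N$; were this to fail for cofinitely many $N$, that translate would equal $f$ on a cofinite set, and since $\langle\im(\rho),f\rangle$ is cofinitary (and, in our construction, freely generated) it would equal $f$ identically --- which forces $w$ to have a proper conjugate subword, contradicting $w\in\mathrm{WS}$, unless $\tau\in\langle\im(\rho)\rangle$, contrary to hypothesis. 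Hence for each single $f$ cofinally many $N\ge m$ are admissible, and as $\tau$ is \emph{not covered by finitely many elements of $\mathcal F$}, a counting argument produces one $N\ge m$ admissible simultaneously for the finitely many functions in $s^{p,*}$. This is a ``hitting'' analogue of Claim~\ref{claim.one.word}, and it absorbs all the hypotheses of part~(1).

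Part~(2) is the same argument, simplified: if $\tau=f^w_{n,\xi}$ with $\xi\notin Y^w_n$, then by clause~\eqref{Q.last} of the definition of $\QQ$ the permutation $\tau$ can never occur as $s^{p,*}_w$; it may still occur as $s^{p,*}_{w'}$ for some other word $w'$, but the obstruction above concerns only the structure of $w$ itself, so the closing pair can again be kept off $\tau$ (and off the other committed functions) for cofinally many $N$, and $D^{\textup{hit}}_{\tau,m}$ is dense. Part~(3) handles the remaining case $\xi\in Y^w_n$, where hitting $\tau$ through $w$ may be impossible: if $\tau$ already occurs as $s^{p,*}_w$ in $p$, then clause~\eqref{Q.last} applied to $p$ gives $n\in\psi[w[s^p]]$ outright, so $p$ itself lies in $\{q : n\in\psi[w[s^q]]\}$; otherwise $\tau$ is not committed via $w$, and we route $w$ to hit $\tau$ above $m$ exactly as in part~(2). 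In either case $q\in D^{\textup{hit}}_{\tau,m}\cup\{q : n\in\psi[w[s^q]]\}$, so that set is dense. Together with the almost-disjointness clause~(C) of the ordering, these density facts deliver Properties~$(B)$ and~$(D)$ of Proposition~\ref{prop.group.forcing}.
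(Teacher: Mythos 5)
There is a genuine gap, and it sits exactly where you yourself flag the ``delicate point''. The paper's own proof never routes the whole word: it extends $s^p$ by a \emph{single} pair $(n,n')$ with $n'=\tau(n)$, where $n\geq m$ is chosen so that $\tau(n)\neq f(n)$ for every $f\in s^{p,*}$ --- possible because $\lvert\tau\setminus\bigcup s^{p,*}\rvert=\omega$ (in case (1) this is exactly the ``not covered by finitely many members of $\mathcal F$'' hypothesis; in cases (2) and (3) it follows from pairwise almost-disjointness of $\mathcal F$ after first reducing to $\tau\notin s^{p,*}$, the reduction you only make in case (3)) --- and so that $n'$ meets the same finitely many genericity requirements \eqref{r.avoid.fixedpoints}--\eqref{r.code.z} as in Lemma~\ref{lemma.generic.coding}; the verification that $q=\langle s^p\cup\{(n,n')\},F^p,\bar m^p,s^{p,*}\rangle\leq p$ is then quoted from Claims~\ref{claim.no.new.paths} and~\ref{claim.no.new.fixed.points}. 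Your proposal instead traces the full evaluation path of a witness $N$ under $w$, and then has to discharge the clause-(C) constraint on the \emph{forced} closing pair. Your ``hitting analogue of Claim~\ref{claim.one.word}'' does not do this. In the critical case where $w$ contains a single occurrence of $a^{\pm1}$, say $w=u\,a\,v$ with $u,v\in\langle\im(\rho)\rangle$, the closing pair is $\bigl(v[\rho](N),\,u[\rho]^{-1}\tau(N)\bigr)$ and clause (C) requires $\tau(N)\neq u[\rho]\,f\,v[\rho](N)$ for every $f\in s^{p,*}$. Your argument that the bad $N$ cannot be cofinite passes from ``$u^{-1}\tau v^{-1}$ agrees with $f$ on a cofinite set'' to ``they are equal'', which would need the group generated by $\im(\rho)\cup\{\tau,f\}$ to be cofinitary; the hypotheses only give cofinitariness of $\langle\im(\rho),\tau\rangle$ and of $\langle\im(\rho),f\rangle$ separately, and the further step ``equality forces $w$ to have a proper conjugate subword'' is a non sequitur. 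Moreover the concluding counting is unsound on its own terms: ``cofinally many admissible $N$ for each single $f$'' does not produce one $N$ admissible for all finitely many $f$ simultaneously, and the hypothesis that $\tau$ is not covered by finitely many members of $\mathcal F$ is a statement about the graph of $\tau$ versus the graphs of members of $\mathcal F$, not versus their $\langle\im(\rho)\rangle$-translates $u f v$, which is what your constraint involves. (Relatedly, in your case (2) the remark that $\tau$ occurring as $s^{p,*}_{w'}$ for another word is harmless fails when $w=a^{\pm1}$: there the closing pair lies on the graph of $\tau^{\pm1}$ itself and clause (C) blocks it outright; the paper handles this by reducing all three cases to $\tau\notin s^{p,*}$ at the outset, which is what yields \eqref{e.tau.infinite}.)

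Your instinct that a general $w\in\mathrm{WS}$ requires completing a path (the paper's one-pair extension literally realizes the hit by the generic generator itself) is reasonable, but if you go that route the clause-(C) obstruction has to be met differently: when $w$ has at least two occurrences of $a^{\pm1}$ the non-endpoint coordinate of the forced pair is itself a fresh choice and can simply avoid the finitely many values $f^{-1}\bigl(u[\rho]^{-1}\tau(N)\bigr)$, $f\in s^{p,*}$; the genuinely problematic case is a single occurrence of $a^{\pm1}$ flanked by nonempty $\im(\rho)$-words, and there the hypotheses as stated give you nothing about $\tau$ versus $u f v$, so the argument cannot be completed by appeals to cofinitariness or to $w\in\mathrm{WS}$. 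The paper's simpler single-pair argument avoids this issue entirely by only ever placing the new pair on the graph of $\tau$ and using \eqref{e.tau.infinite} directly.
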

\begin{proof}
Let $\tau$ and $m$ as in the lemma be given. 
Note that in all three cases $\tau \notin \langle \im(\rho)\rangle$ and $\langle \im(\rho), \tau\rangle$ is cofinitary 
 and we can assume $\tau \notin s^{*,p}$ (for in the third case, otherwise $\Psi^{-1}(n)\in w[s^p]$) and therefore that 
\begin{equation}\label{e.tau.infinite}
\lvert \tau \setminus \bigcup  s^{p,*} \rvert = \omega.
\end{equation}
Let $W^*$ be the set of subwords of circular shifts of words in $F^p$, and find $n\in\omega\setminus m$ such that 
\begin{align} 
\label{e.tau.fix} n &\notin  \tau^{-1}\Big[ \bigcup \big\{\fix(w[s^p]) \colon w \in W^*\setminus\{ \emptyset\}\big\}\Big],\\
\label{e.tau.fix2} n &\notin  \bigcup \big\{\fix(\tau^{-1} w^i[s^p])  \colon i \in \{-1,1\}, w \in W^*\big\}, \\
\label{e.tau.use} n &\notin U\cup \tau^{-1}[U]\text{, where}\\
&U=\bigcup \big\{\set(w,s^p,\bar m^p_w) : w \in \dom(\bar m^p)\big\}\Big]\text{, and}  \notag \\
\label{e.tau.f}  n &\notin \bigcup \big\{\fix(\tau^{-1}f) : f \in s^{p,*}\big\}
\end{align}
Requirements \eqref{e.tau.fix}--\eqref{e.tau.use} exclude only finitely many $n$ (noting $\tau \notin \langle \im(\rho)\rangle$ and $\langle \im(\rho), \tau\rangle$ is cofinitary).
Since \eqref{e.tau.f} holds for infinitely many $n$ by \eqref{e.tau.infinite}, we can pick $n$ satisfying \eqref{e.tau.f}--\eqref{e.tau.use}.
Now \eqref{e.tau.fix} and \eqref{e.tau.fix2} implies
\begin{equation}
(\forall w \in F^p)\; \tau(n) \notin E^w_{s^p,n} \label{e.E} 
\end{equation}
(recalling $E^w_{s^p,n}$ was defined in \eqref{e.domain.ext.cont}) and by \eqref{e.tau.use} and \eqref{e.tau.f},
\begin{gather}
(\forall w \in \dom(\bar m^p))\; \{n,\tau(n)\} \cap \set(w,s^p,\bar m^p_w) = \emptyset. \label{e.use}   \\
(\forall f \in s^{p,*})\; \tau(n) \neq f(n), \label{e.f}
\end{gather}
By \eqref{e.f}, \eqref{e.use}, and the proof of Lemma~\ref{l.computable.coding}, letting $s=s^p\cup\{(n,\tau(n))\}$ and $q = \la s, F^p, \bar m^p, s^{p,*}\ra$ we obtain $q\in\QQ$. 
By \eqref{e.E} and Lemma~\ref{l.csw}, $q\leq p$. 
Clearly, $q\in D^{\textup{hit}}_{\tau,m}$.
\end{proof}

Next, we show Property \ref{p.sufficiently.generic}.
\begin{lemma}
It is forced by $\QQ$ that 
the cofinitary representation given by $\rho\cup\{(a,\sigma^G)\}$ 
is sufficiently generic.
\end{lemma}
\begin{proof}
The proof is similar to that of Lemma~\ref{lemma.generic.coding}. 
We leave the details to the reader.
\end{proof}

Finally we show the following.
\begin{lemma}\label{l.Knaster}
The forcing $\QQ$ is Knaster.
\end{lemma}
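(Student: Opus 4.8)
The plan is a routine $\Delta$-system argument in which the rôle of the ``stem'' of a condition is played by the coordinate $s^p$: once the other coordinates of a family of conditions have been made pairwise compatible, any two conditions sharing the same stem $s$ will have the obvious common refinement. So let $\langle p_\alpha : \alpha<\omega_1\rangle$ be given, with $p_\alpha=\langle s^\alpha,F^\alpha,\bar m^\alpha,s^{\alpha,*}\rangle$. Since there are only countably many injective finite partial functions from $\omega$ to $\omega$, by the pigeonhole principle I would first pass to an uncountable $I_0\subseteq\omega_1$ and fix $s$ with $s^\alpha=s$ for all $\alpha\in I_0$.

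\textbf{Making the remaining coordinates pairwise compatible.} Next I would apply the $\Delta$-system lemma to $\langle\dom(\bar m^\alpha):\alpha\in I_0\rangle$ and to $\langle\dom(s^{\alpha,*}):\alpha\in I_0\rangle$, and then thin out once more: both roots are finite, $\bar m^\alpha$ takes values in the countable set $\omega$, and $s^{\alpha,*}$ takes values in $\{f_{m,\xi}:m\in\psi[w[s]],\ \xi\in Y^w_m\}$, which is countable since $\psi[w[s]]$ is finite and each $Y^w_m$ is countable. So we obtain an uncountable $I\subseteq I_0$ such that for all $\alpha,\beta\in I$ the functions $\bar m^\alpha,\bar m^\beta$ agree on $\dom(\bar m^\alpha)\cap\dom(\bar m^\beta)$ and $s^{\alpha,*},s^{\beta,*}$ agree on $\dom(s^{\alpha,*})\cap\dom(s^{\beta,*})$ (so in each case the union is again a function). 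No thinning of the $F^\alpha$ is required, since the union of two finite, subword-closed subsets of $\mathrm{WS}$ is again of this form.

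\textbf{The common refinement.} Finally, for $\alpha,\beta\in I$ I would check that
\[
r=\big\langle\ s,\ \ F^\alpha\cup F^\beta,\ \ \bar m^\alpha\cup\bar m^\beta,\ \ s^{\alpha,*}\cup s^{\beta,*}\ \big\rangle
\]
is a condition in $\QQ$ below both $p_\alpha$ and $p_\beta$. That $r\in\QQ_0$ is immediate from the choices just made; for clause~\eqref{Q.exact} one notes that for $w\in\dom(\bar m^\alpha\cup\bar m^\beta)$ the witnessing length is simply $l^{p_\alpha}_w$ or $l^{p_\beta}_w$, since both the stem $s$ and the value of $\bar m$ at $w$ are unchanged. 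To see $r\leq_0 p_\alpha$ (and, symmetrically, $r\leq_0 p_\beta$): clause (A) is trivial; clause (C) holds because $s^r\setminus s^\alpha=\emptyset$; clause (D) holds because $(\bar m^\alpha\cup\bar m^\beta)$ restricted to $\dom(\bar m^\alpha)$ equals $\bar m^\alpha$; and clause (B) holds because $s^r=s^\alpha$, so every $m\in\fix(w[s^r])$ already lies in $\fix(w[s^\alpha])$, whence one may take the non-empty subword $w'=w$ (recall $\mathrm{WS}\subseteq\mathrm{WD}$, so $w\neq\emptyset$) with $w_0=w_1=\emptyset$ and $k=0$. Hence $\{p_\alpha:\alpha\in I\}$ is linked, and $\QQ$ is Knaster.

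\textbf{Where the work is.} The argument is essentially bookkeeping. The only step that requires a little care is the second one: to make the $s^{\alpha,*}$-coordinates pairwise compatible after the $\Delta$-system reductions one must know that on the finite common root only countably many values can occur, and this is exactly where the countability of each $Y^w_m$ is used (indeed without it $\QQ$ would already fail to be ccc, as witnessed by conditions protecting $f_{m,\xi}$ for uncountably many $\xi\in Y^w_m$). Everything else is dictated by the choice of a common stem $s$, which is precisely why clauses (B) and \eqref{Q.exact} cause no difficulty when verifying that $r$ lies below $p_\alpha$ and $p_\beta$.
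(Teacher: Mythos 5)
Your overall architecture (fix the stem $s^\alpha=s$ by pigeonhole, $\Delta$-systems on the remaining coordinates, then take unions) is the same as the paper's, and most of the bookkeeping — agreement of the $\bar m^\alpha$ on the common domain, the union of the $F^\alpha$, and the verification of clauses (A)--(D) and \eqref{Q.exact} for the amalgam when $s^r=s^\alpha$ — is correct. But there is a genuine gap in the step where you make the $s^{\alpha,*}$-coordinates agree on the root of the $\Delta$-system: you justify the thinning by asserting that each $Y^w_m$ is countable. Nothing in Section~\ref{section.group.forcing} grants this — the $Y^w_m$ are arbitrary subsets of $\omega_1$ handed to the forcing as parameters — and in the intended application of Section 4 they are uncountable (they code the sets $X_\delta\subseteq\omega_1$). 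So the set of possible values of $s^{\alpha,*}$ on the finite root has size $\aleph_1$, and no counting argument lets you stabilize it; as you yourself note in your last paragraph, under your reading the lemma would simply be \emph{false} for uncountable $Y^w_m$, which should have been a warning sign, since the paper never assumes countability.

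The repair is to realize that no agreement on the $s^*$-coordinate is needed at all, which is exactly how the paper argues: if $s^p=s^q$ and $\bar m^p,\bar m^q$ agree on $\dom(\bar m^p)\cap\dom(\bar m^q)$, then $r=\la s^p, F^p\cup F^q, \bar m^p\cup\bar m^q, s^{p,*}\cup s^{q,*}\ra$ is already a common extension. The fourth coordinate merely records finitely many permutations that future extensions of $s$ must avoid (this is how it is used: ``for all $f\in s^{p,*}$'', ``$\bigcup s^{p,*}$''), so it should be read as a finite set of pairs $(w,f)$ rather than as literally single-valued; with that reading $s^{p,*}\cup s^{q,*}$ is unproblematic, clause (C) for $r\leq p,q$ is vacuous because $s^r\setminus s^p=\emptyset$, and clause \eqref{Q.exact} holds exactly as you argued. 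Once the compatibility criterion is only ``same $s$, compatible $\bar m$'', your pigeonhole on $s$ and your $\Delta$-system-plus-countability argument for $\bar m$ (values in $\omega$, finite root) suffice, and the Knaster property follows; the thinning of $s^{\alpha,*}$, and with it the false countability claim, should be deleted.
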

\begin{proof}
It is straightforward to check that if $p,q\in\QQ$ are such that $s^p = s^q$ and $\bar m^p$ agrees with $\bar m^q$ on $\dom(\bar m^p) \cap \dom(\bar m^q)$ then
\[
r= \la s^p, F^p\cup F^q, \bar m^p \cup \bar m^q, s^{p,*}\cup s^{q,*}\ra
\]
is a condition in $\QQ$ and $r \leq p, q$.
Therefore $\QQ$ is Knaster by a standard $\Delta$-systems argument.
\end{proof}

\section{The forcing iteration}\label{s.iteration}

Fix $M,N$ arbitrary such that $2\leq M<N<\omega$. 
Our goal is to force that $\mathfrak{a}_g=\omega_M<\mathfrak{c}=\omega_N$ with a $\Pi^1_2$ definable witness $\mathcal G$ to $\mathfrak{a}_g$.  

Our strategy will be to adjoin (using the forcing from the previous section) a definable set of permutations $\mathcal G'\subseteq S_\infty$ such that the following hold:
\begin{itemize}
\item $\mathcal G=\la\mathcal G'\ra$ is a maximal cofinitary group.
\item For a fixed sequence $\la S_\delta:\delta < \omega_M\ra$ of stationary costationary sets which is definable in $L$, $\mathcal G'$ is definable as follows: For each $g \in S_\infty$, 
\begin{equation}\label{e.def.G'}
g \in \mathcal G' \iff  \big(\exists \gamma \in \lim(\omega_M)\big)\; \psi[g]= \{m \in\omega : L[g]\vDash S_{\gamma+m} \in \NS \}.
\end{equation}
\end{itemize}
In fact, the definability of $\mathcal G'$ will be \emph{local} to a large class of countable models, namely, the following class.
\begin{definition}
A {\emph{suitable model}} is a transitive  model $\mathcal M$ such that $\mathcal M \vDash\ZF^-$, $(\omega_M)^\M$ exists and
$(\omega_M)^\M=(\omega_M)^{L\cap\M}$ (by $\ZF^-$ we mean an appropriate axiomatization of set theory without the Power Set Axiom).
\end{definition}
We will construct $\mathcal G'$ so that it is definable as follows: For each $g \in S_\infty$, 
$g \in \mathcal G' \iff$ for all countable suitable models $\mathcal M$ containing $g$ as an element, $\mathcal M\vDash``\big(\exists \gamma \in \lim(\omega_M)\big)\;\psi[g]= \{m \in\omega : S_{\gamma+m} \in \NS \}$''.

\medskip

The MCG $\mathcal G$ generated by $\mathcal G'$ will be isomorphic to $\mathbb{F}(A)$ for a set $A\subseteq \omega_M$.
The set $\mathcal G'$ will consist precisely of those elements of $\mathcal G$ whose presentation in $W_A$ has no conjugated proper subwords and no roots (see Notation~\ref{n.words}).
While we can achieve the definability property described above only for $\mathcal G'$, we shall see that this entails that the entire group $\mathcal G$ is $\Pi^1_2$-definable (see Lemma~\ref{l.Pi-1-2} and compare Remarks~\ref{r.subwords} and \ref{r.nr}).

\medskip

To achieve all of the above we proceed in several steps.
Since the proof is long and involved, we present a short roadmap which may also be used as a reference for notation.
\begin{enumerate}
\item\label{roadmap.clubs}\label{roadmap.Y} 
We start with the constructible universe $L$ as the ground model. 
First, we do a preparatory forcing not adding subsets of $\omega$. 
To this end, choose a sequence $\la S_\delta : \delta < \omega_M \ra$ of stationary costationary subsets of $\omega_{M-1}$. We force to add a sequence $\la C_\delta : \delta < \omega_M \ra$ such that $C_\delta$ is a club in $\omega_{M-1}$ which is disjoint from $S_\delta$, ``killing'' the stationarity of $S_\delta$.
Moreover, we force to add a sequence 
\[
\la Y^1_\delta : \delta < \omega_M\ra
\] 
such that  $Y^1_\delta \subseteq \omega_1$ and $Y^1_\delta$ ``locally codes'' $C_\delta$. 
By ``locally coding'' we mean the property \hyperlink{**}{$(**)^1_{\gamma,m}$} below, where $\delta=\gamma+m$ for $m\in\NN$ and $\gamma\in\lim(\omega_M)$. 
To be able to associate each $C_{\gamma+m}$ to the same $\gamma$  for each $m\in\NN$ in a robust, local manner,
we also add a ``code'' $ W^1_\gamma\subseteq\omega_1$ for each $\gamma\in\lim(\omega_M)$.

The forcing that adds $\la C_\delta : \delta < \omega_M\ra$, $\la W^1_\gamma : \gamma \in \limord(\omega_M)\ra$, as well as $\la Y^1_\delta : \delta < \omega_M\ra$ is denoted by $\PP^*_0$, and the $(\PP^*_0, L)$-generic extension is denoted by $V_1$ or by $L[G^*_0]$.

\item\label{roadmap.coding.reals} We force over $V_1$ to add a sequence 
$
\la c^W_\gamma : \gamma \in \limord(\omega_M)\ra
$ 
of reals such that
$c^W_\gamma$ codes $W^1_\gamma$ (and thus, $\gamma$).
We denote the forcing that adds this sequence by 
$\PP^W$ and the $(V_1,\PP^W)$-generic extension by $V_2$.

\item\label{roadmap.continuum} We add $\omega_N$-many Cohen reals using $\operatorname{Add}(\omega,\omega_N)$. Write $V_3$ for the $(V_2,\operatorname{Add}(\omega,\omega_N))$-generic extension. 

\item\label{roadmap.group} 
We now force to add the definable MCG $\mathcal G$. This is done in an iteration $\PP^{\mathcal{G}}:=\la\PP_\alpha,\dot{\QQ}_\alpha:\alpha\in\omega_M\ra$ of length $\omega_M$ over $V_3$, letting $\QQ_\alpha = \QQ^{\mathcal{F}}_\alpha * \QQ^{\text{cd}}_{\mathcal F^*}(\mathcal{F}_\alpha)* \QQ^{\mathcal G}_\alpha * \DD$ be defined in $V_3[G_\alpha]$, the $(V_3, \PP_\alpha)$-generic extension, as follows:
\begin{enumerate}
\item\label{roadmap.group.first} The first forcing $\QQ^{\mathcal{F}}_\alpha$ adds a family $\mathcal F_\alpha$ of size $\omega_1$ consisting of cofinitary permutations of $\omega$.
We do this in such a manner that in the final model $L[G]$ the graphs of any two elements of $\bigcup_{\alpha < \omega_M} \mathcal F_\alpha$ will be almost disjoint.
\item\label{roadmap.coding.reals2} The next forcing  $ \QQ^{\text{cd}}_{\mathcal F^*}(\mathcal{F}_\alpha)$ adds a real
$c^{\mathcal F}_\alpha$ which almost disjointly  codes $\mathcal F_\alpha$ via a definable almost disjoint family $\mathcal F^* \in L$ which remains fixed throughout the iteration.

\item\label{roadmap.group.G} The forcing $\QQ^{\mathcal G}_\alpha$ from the previous section adds a single generator $\sigma_\alpha$ of our MCG $\mathcal G$, using all the machinery added in the previous steps to ensure definability of the resulting group.
\item\label{roadmap.group.last} Finally, $\DD$ is Hechler's forcing to add a dominating real.
\end{enumerate}
The final $(V_3,\PP^{\mathcal{G}})$-generic extension is denoted by $L[G]$.
\end{enumerate}
Step \eqref{roadmap.clubs} is described in Section~\ref{subsection.preparing} below. 
Steps \eqref{roadmap.coding.reals} and \eqref{roadmap.continuum} are described in Section~\ref{subsection.adding.reals}.
Finally steps \eqref{roadmap.group.first}--\eqref{roadmap.group.last} are described in Section~\ref{subsection.group}.

\subsection{Preparing the Universe}\label{subsection.preparing}

Work in the constructible universe $L$.
The preparatory forcing $\PP^*_0$ from Item~\eqref{roadmap.clubs} is an iteration of length $M-1$,  
\begin{equation}\label{e.P^0}
\PP^*_0 = \PP_0^{M-1} * \hdots * \PP^1_0.
\end{equation}
We must describe $\PP_0^{n}$ for $n\in\{M-1,\hdots,1\}$.

\medskip

Fix, for the remainder of this article, a sequence
\begin{equation*}\label{e.bar.D}
\bar{S}=\la S_\delta:\delta < \omega_M\ra
\end{equation*} 
of stationary costationary subsets of $\omega_{M-1}$ consisting of ordinals of cofinality $\omega_{M-2}$ and such that for $\delta\neq\delta'$, $S_\delta \cap S_{\delta'}$ is non-stationary. 
We also ask that $\bar S$ be definable (without parameter) in $L_{\omega_M}$. 

Let $\PP^{\text{cl}}_{\delta}$ denote the forcing which adjoins a closed unbounded subset $C_{\delta}$ of $\omega_{M-1}$ such that $C_{\delta}\cap S_{\delta}=\emptyset$ using bounded approximations.
Note that  $\PP^{\text{cl}}_{\delta}$ preserves stationarity of 
$S_\eta$ for each $\eta \in \omega_M\setminus\{\delta\}$.
We define 
\begin{equation*}\label{e.PP^0_M-1}
\PP_0^{M-1} = \prod_{\delta<\omega_M} \PP^{\text{cl}}_{\delta}
\end{equation*}
where the product uses supports of size at most $\omega_{M-2}$. 
Note that $\PP_0^{M-1}$ adds no sequences of length at most $\omega_{M-2}$ and preserves cardinals and cofinalities.

\medskip

Work in $L$ again momentarily. 
We shall need to code each $C_\delta$, as well as each $\gamma\in\lim(\omega_{M})$ by a subset of $\omega_1$. 
To this end, fix for each $n$ such that $M-1 < n \leq 1$ a sequence $\bar{S}^n=\la S^n_\xi:\xi<\omega_{n+1}\ra$  of subsets of $\omega_{n}$ with pairwise intersections of size at most $\omega_{n-1}$. 
We ask that $\bar{S}^n$ is $\Sigma_1$-definable in $L_{\omega_{M}}$ (without parameters) and that, moreover:

\medskip
\noindent
\hypertarget{p.collapse1}{($\dagger$):} For each $\xi < \omega_{n+1}$, $S^{n+1}_{\xi} \in L_\mu$ where $\mu$ is least such that $L_\mu\vDash |\xi| = \omega_n$.
\smallskip

For the coding of each $\gamma\in\lim(\omega_M)$, we make the following definition.
\begin{definition}~
\begin{itemize}
\item Let $G\colon \ON^2\to\ON$ be G\"odel's $\Sigma_1$ definable pairing function.
Let us call a set \emph{$W\subseteq \omega_{M-1}$ an %
$M-1$-code of $\gamma<\omega_M$} if %
\begin{equation*}\label{e.least-code}
\la \gamma, < \ra \cong \la \omega_{M-1}, G^{-1}[W]\ra.
\end{equation*}
\item For each $\gamma\in\lim(\omega_M)$, let $W^{M-1}_\gamma$ denote its $\leq_L$-least $M-1$-code.
\item Let $\gamma \in\lim(\omega_M)$ and $W\subseteq \omega_n$ for $M-1>n\geq 1$. By induction, define $W$ to be an $n$-code of $\gamma$ if $\{\xi < \omega_{n+1} : \lvert W \cap S^n_\xi \rvert<\omega_n\}$ is an $n+1$-code of $\gamma$.
\end{itemize}
\end{definition}

Until further notice, let us fix $\gamma\in\lim(\omega_{M})$ and $m\in\NN$ and write $\delta=\gamma+m$. 
Work in $L[C_\delta]$. 
We first find $Y^{M-1}_{\delta}\subseteq\omega_{M-1}$ such that
 $Y^{M-1}_\delta$ localizes the non-stationarity of $S_{\gamma+m}$ to suitable models of height at least $\omega_{M-2}$, in the following sense:

\medskip
\noindent
\hypertarget{*}{$(*)_{\gamma,m}$}: Suppose $\M$ is a suitable model with $\omega_{M-2} \subseteq \mathcal M$ and such that for $\beta = (\omega_{M-1})^{\mathcal M}$ we have $\{Y^{M-1}_\delta\cap\beta,W^{M-1}_\delta\cap\beta\}\subseteq\M$.  
Then it must hold that $\mathcal{M}\vDash$``$W^{M-1}_\gamma\cap\beta$ is an $M-1$-code of some $\bar \gamma\in\lim(\omega_M)$ such that $S_{\bar\gamma+m}\in \NS$''. 

\medskip

To see such $Y^{M-1}_\delta$ can be found, momentarily write 
\[
D =\{\beta\in\omega_{M-1} : (\exists \mathcal{M}^*)\; \mathcal M^* \prec L_{\omega_M}[C_\delta], \{C_\delta,\gamma\}\in \mathcal M^*, \beta=\omega_{M-1}\cap \mathcal M^*\}.
\]
For $Y\subseteq\ON$, let $\operatorname{Even}(Y)=\{\xi : 2\xi \in Y\}$ and $\operatorname{Odd}(Y)=\{\xi : 2\xi+1 \in Y\}$. 
Choose $ Y^{M-1}_\delta$ to be any subset of $\omega_{M-1}$ such that $\operatorname{Even}(Y^{M-1}_\delta)=C_\delta$ and for each $\xi \in D$, the preimage under $G$ of $\operatorname{Odd}(Y^{M-1}_\delta)\cap [\xi, \xi+\omega_{M-2})$ is a well-founded binary relation of rank at least $\min(D\setminus (\xi+1))$.

To see that $Y^{M-1}_\delta$ indeed satisfies $(*)_{\gamma,m}$, let $\mathcal M$ as in $(*)_{\gamma,m}$ be given.
 By choice of $Y^{M-1}_\delta$, $\beta=(\omega_{M-1})^{\mathcal M} \in D$. 
 Thus there is a transitive model $\overline{\mathcal M}^*$ and an elementary embedding $j: \overline{\mathcal M}^* \to L_{\omega_M}[C_\delta]$ with critical point $\beta$ and such that $\gamma \in \ran(j)$. Since also $C_\delta$ and $W^{M-1}_\gamma \in \ran(j)$, by elementarity 
 $\overline{\mathcal M}^*\vDash$``$W^{M-1}_\gamma\cap\beta$ is an $M-1$-code of a limit ordinal $\bar\gamma$ such that $C_\delta\cap \beta \cap S_{\bar \gamma +m}=\emptyset$''.
But the rank of $G^{-1}[W^{M-1}\cap\beta]$ is absolute between transitive models, so $\mathcal M$ must satisfy the same sentence.

\medskip

Suppose now that $M-2\geq 1$ and let $n\in\{M-2, \hdots, 1\}$. We will define $\PP_0^n$.
By induction, suppose we are in an $(L,\PP_0^{M-1} * \hdots * \PP_0^{n+1})$-generic extension and we have for each $\gamma \in \lim(\omega_M)$ an $n+1$-code  $W^{n+1}_\gamma$ and for each $m\in\NN$ a set $Y^{n+1}_{\gamma+m}$ which localizes the non-stationarity of $S_{\gamma+m}$ in the following sense: 

\bigskip

\noindent
\hypertarget{**}{$(**)^{n+1}_{\gamma,m}$}: Suppose $\M$ is a suitable model with $\omega_{n} \subseteq \mathcal M$ and for $\beta = (\omega_{n+1})^{\mathcal M}$, we have $\{W^{n+1}_\gamma\cap\beta,Y^{n+1}_\delta\cap\beta\}\subseteq\M$. 
Then
$\mathcal{M}\vDash \varphi(n+1, W^{n+1}_\gamma\cap\beta,m)$, where $\varphi(n+1,W,m)$ is the formula: ``$W$ is a $n+1$-code of some $\bar \gamma\in\lim(\omega_M)$ such that $S_{\bar\gamma+m}$ is not stationary''. 

\bigskip

For the induction start, note that for $n+1=M-1$ this is indeed satisfied since \hyperlink{**}{$(**)^{M-1}_{\gamma,m}$} is just \hyperlink{*}{$(*)_{\gamma,m}$}. 

Write $\PP^{\text{cd}}_{{n}}(X)$ to denote Solovay's almost disjoint coding to add a subset of $\omega_{{n}}$ which codes $X$ via $\bar S^{{n}}$.
Forcing with $\PP^{\text{cd}}_{{n}}(W^{n+1}_\gamma)$ adds generic ${n}$-code for $\gamma$ which we denote by $W^{{n}}_\gamma$.
For each $m\in\NN$, forcing with $\PP^{\text{cd}}_{{n}}(Y^{n+1}_{\gamma+m})$ adds a set a $Z^{{n}}_{\gamma+m}\subseteq \omega_{{n}}$ which via $\bar S^{{n}}$ codes $Y^{n+1}_{\gamma+m}$. 

Crucially, if $\M$ is a suitable model such that $\{W^{{n}}_\gamma,Z^{{n}}_{\gamma+m}\}\subseteq\M$ (and hence $\omega_{{n}} \subseteq \mathcal M$)
then $\{W^{n+1}_\gamma\cap\beta,Y^{n+1}_\delta\cap\beta\}\subseteq\M$ for $\beta=(\omega_{n+1})^{\mathcal M}$ as a consequence of  
\hyperlink{p.collapse1}{($\dagger$)}
and hence by 
\hyperlink{**}{$(**)^{n+1}_{\gamma,m}$},
 $\mathcal{M}\vDash$``$W^{{n}}_\gamma$ is a ${n}$-code of a limit ordinal $\bar \gamma<\omega_M$ such that $S_{\bar\gamma+m}$ is not stationary''. In other words, $\mathcal M \vDash\varphi({n},W^{{n}}_\gamma,m)$.

\medskip

The next step is to %
localize the coding to suitable models of height at least $\omega_{n-1}$.

\begin{definition} 
Fix $m\in\NN$ and $W, Z\subseteq\omega_{{n}}$ such that $\varphi({n},W,m)$  holds in
any suitable model $\mathcal{M}$ containing both $W$ and $Z$ as elements. Denote by $\mathcal{L}_{{n}}(Z,W,m)$ the forcing whose conditions are 
functions $r:|r|\to 2$ with domain $|r| \in \omega_{{n}}$ such that
\begin{enumerate}
\item if $\xi<|r|$ then $\xi\in Z$ iff $r(2\cdot \xi)=1$,
\item if $\beta\leq|r|$ and $\mathcal{M}$ is a suitable model containing $r\rest \beta$ as an element and with
 $\beta=\omega_{n}\cap \mathcal{M}$, then 
$\mathcal{M}\vDash\varphi({n},W\cap \beta,m)$.
\end{enumerate}
The order on $\mathcal{L}_{{n}}(Z,W,m)$ is end-extension.
\end{definition}

As is easy to verify, the forcing $\mathcal{L}_{{n}}(Z^{{n}}_{\delta},W^{{n}}_\gamma,m)$ adds the characteristic function of a set $Y^{{n}}_{\delta}\subseteq\omega_{{n}}$ such that $\hyperlink{**}{(**)^{{n}}_{\gamma,m}}$ is true.

\medskip

Define the next iterand in \eqref{e.P^0} to be
\[
\PP_0^{n} = \prod_{\gamma\in\lim(\omega_{M})}\bigg( \PP^{\text{cd}}_{{n}}(W^{n+1}_\gamma) * 
 \prod_{m\in\NN}  \PP^{\text{cd}}_{{n}}(Y^{n+1}_{\gamma+m}) * \mathcal{L}_{{n}}(Z^{{n}}_{\gamma+m},W^{{n}}_\gamma,m)\bigg)
\]
with supports of size at most $\omega_{n-1}$.
It is well-known that $\PP^n_0$ adds no sequences of length at most $\omega_{n-1}$, and preserves cofinalities and cardinals when forcing over the $(L,\PP_0^{M-1} * \hdots * \PP_0^{n})$-generic extension (see \cite{memoirs,VFSFLZ11}).

\medskip

Denote by $V_0$ the $(L,\PP^*_0)$-generic extension. 
In $V_0$ we have $W^1_\gamma$ coding $\gamma$ via $\bar S^1, \hdots, \bar S^{M-1}$ for each $\gamma \in \lim(\omega_M)$, and $\la Y^1_\delta : \delta<\omega_M\ra$ such that
for each $\gamma \in \lim(\omega_M)$ and $m\in\NN$, \hyperlink{**}{$(**)^1_{\gamma,m}$} holds true.
Note that $V_0\cap\omega^\omega=L\cap\omega^\omega$.

\subsection{Preparing the continuum}\label{subsection.adding.reals}

Fix (for the rest of the proof) a constructible almost disjoint family
$$\mathcal{F}^*:=\{a_{\xi}: \xi < \omega_1\}$$
which is $\Sigma_1$ (without parameters) in $L_{\omega_2}$ and with the following property (which will be important in the proof of Lemma~\ref{l.MCG.coding.local}):

\medskip
\noindent
\hypertarget{p.collapse}{($\ddagger$):} For each $\xi < \omega_1$, $a_{\xi} \in L_\mu$ where $\mu$ is least such that $L_\mu\vDash |\xi| = \omega$.

\medskip

Next force with the finite support iteration 
\[
\PP^W:=\la\PP^{W}_\gamma,\dot{\QQ}^{W}_\gamma:\gamma\in\limord(\omega_M)\}
\]
where
for each $\gamma$, $\dot \QQ^W_\gamma$ adds the real $c_\gamma^W$ which almost disjointly via the family $\mathcal F^*$ codes 
$W^1_\gamma$. 
Let $V_2$ be the $(\dot \PP^W,V_0)$-generic extension .

Using the standard forcing $\operatorname{Add}(\omega,\omega_N)$ (finite partial functions from $\omega_N \times \omega$ into $2$) adjoin $\omega_N$-many reals to $V_2$ to increase the size of the continuum to $\omega_N$ and denote the resulting model  to obtain a model $V_3$.

\subsection{Adding the MCG}\label{subsection.group}

\medskip

We shall now define a finitely supported iteration $\PP^{\mathcal{G}}:=\la\PP_\alpha,\dot{\QQ}_\alpha:\alpha\in\omega_M\ra$ which adds a self-coding MCG to the model $V_3$.

Along the iteration, for each $\alpha\in\omega_M$ we will define a $\PP_\alpha$-name $\dot{I}_\alpha\subseteq [\beta_\alpha,\beta_{\alpha+1})$ for a set of ordinals, such that at stage $\alpha$ of the construction we adjoin reals encoding a stationary kill of $S_\delta$ (that is, a real locally coding $C_\delta$) for $\delta\in I_\alpha$.
We then show that there is ``no accidental coding of a stationary kill'' in Lemma~\ref{no_accidental_real}.

In order to define $\PP^{\mathcal{G}}:=\la\PP_\alpha,\dot{\QQ}_\alpha:\alpha\in\omega_M\ra$, recall from \eqref{e.comp.bij} that we have fixed a computable bijection 
\[
\psi:\omega\times\omega\to\omega
\]
and in addition, fix a bijection
\[
\psi':\omega_1\times\omega\times\omega\to\omega_1
\] 
 which is $\Delta_1$ (without parameters) provably in $\ZF^-$.
The function $\psi'$ will be used to identify the family $\mathcal{F}_\alpha$ which we add at stage $\alpha$ with a subset of $\omega_1$. 

Suppose now by induction we are in  $V_3[G_\alpha]$, the $(V_3, \PP_\alpha)$-generic extension.
We presently define $\QQ_\alpha = \QQ^{\mathcal{F}}_\alpha * \QQ^{\text{cd}}_{\mathcal F^*}(\mathcal{F}_\alpha)* \QQ^{\mathcal G}_\alpha * \DD$.

For the definition of $\QQ^{\mathcal{F}}_\alpha$ assume by induction that at previous stages we have added families $\mathcal F_\beta$ for $\beta < \alpha$ 
consisting of cofinitary permutations. 
We now adjoin a family 
 \[
 \mathcal{F}_\alpha=\la f^\alpha_{m,\xi} : m\in\NN,\xi\in\omega_1\ra
 \] 
 of permutations 
 such that $\lvert f^{\alpha}_{m,\xi} \cap f^{\alpha'}_{m',\xi'}\rvert < \omega$ for each $(m,\xi) \in \NN\times\omega_1$ and $(\alpha',m',\xi') \in (\alpha+1) \times\NN\times\omega_1$  with $(\alpha,m,\xi)\neq(\alpha',m',\xi')$. 
 For this we can use a $\sigma$-centered forcing, namely a variant of Solovay's forcing to add almost disjoint sets with finite conditions which is defined in~\cite{VFAT}. Denote this forcing by $\QQ^{\mathcal{F}}_\alpha$ and by $V_{\alpha,1}$ the resulting model. 
 
 Next let $ \QQ^{\text{cd}}_{\mathcal F^*}(\mathcal{F}_\alpha)$ be the forcing to add a real $c^{\mathcal F}_\alpha$ which almost disjointly via the family $\mathcal F^*$ (see Section~\ref{subsection.adding.reals}) codes 
\[
\psi'\left[\bigcup_{\xi<\omega_1} \{\omega \cdot \xi + m\}\times f^\alpha_{m,\xi}\right],
\]
a subset of $\omega_1$ which via $\psi'$ codes $\mathcal F_\alpha$.
Let $V_{\alpha,2}$ be the extension of $V_{\alpha,1}$ which contains $c^{\mathcal F}_\alpha$
and note that $\mathcal F^\alpha$ is definable in $L[c^{\mathcal F}_\alpha]$.

\medskip

Working in $V_{\alpha,2}$ we define $\QQ^{\mathcal G}_\alpha$, the forcing which adds a new group generator.

Suppose by induction that $\PP_\alpha$ has added a cofinitary representation $\rho_\alpha$.
Its image generates a cofinitary group $\mathcal{G}_\alpha$.  
Suppose by induction that $\dom(\rho_\alpha)=\{\beta_\xi : \xi<\alpha\}\subseteq\lim(\omega_M)$ and write $A_\alpha$ for $\{\beta_\xi : \xi<\alpha\}$.
Moreover write $\rho_\alpha(\beta_\xi)=\sigma_\xi$ for each $\xi<\alpha$.
Our next forcing will add the generic permutation $\sigma_\alpha$ as a new generator,
and we extend our group to $\mathcal G_{\alpha+1}=\la\mathcal G_\alpha\cup\{\sigma_\alpha\}\ra$.

If $\alpha$ is a limit, let 
\[
\beta_\alpha = \sup\{\beta_\xi : \xi<\alpha\}
\]
and otherwise,
let 
\[
\beta_\alpha=\beta_{\alpha-1} + \max(|\alpha-1|, \omega)
\]
(we mean ordinal addition). 
This is the ordinal to which we associate the new generator $\sigma_\alpha$ so that 
\[
\rho_{\alpha+1}=\rho_{\alpha}\cup\{(\beta_\alpha,\sigma_\alpha)\}
\] 
is a cofinitary representation.
 
Writing $a=\beta_\alpha$, every element of $\mathbb{F}(A_\alpha\cup\{a\})$ corresponds to a reduced word 
in $W_{A_\alpha,a}$.
Let $\Wa_\alpha$ be the set of such words in which $a$ or $a^{-1}$ {\emph{occurs at least once}}.
Note that the set $\Wa_\alpha$ corresponds to the new permutations in the group $\mathcal{G}_{\alpha+1}$. 
In other words, permutations in $\mathcal{G}_{\alpha+1}\setminus\mathcal G_\alpha$ are precisely those  of the form $\rho_{\alpha+1}(w)$ with $w\in \Wa_\alpha$, 
and $\rho_{\alpha+1}(w)$ equals $w[\sigma_\alpha]$ in our earlier notation, provided we take $\rho=\rho_\alpha$. 

As before write $\Wncs_\alpha$ for the set of words in $\Wa_\alpha$ without a proper conjugated subword.
Write $\Wnr_\alpha$ for the set of words in $\Wa_\alpha$ which are not of the form $\bar w^n$ for $\bar w \in \Wa_\alpha\setminus\{w\}$. (Compare Notation~\ref{n.words}.)

Let $i_\alpha: \Wncs_\alpha \cap\Wnr_\alpha \to \limord(\lvert \alpha \rvert)$ be a bijection sending $a$ to $0$; 
we shall use $i_\alpha$ to associate the ordinal $\beta_\alpha + i_\alpha(w)$ to each $w \in \Wncs_\alpha$.
We note that those elements of $\mathcal G_{\alpha+1}\setminus \mathcal G_\alpha$ which correspond via $\rho_{\alpha+1}^{-1}$ to words in
$\Wncs_\alpha$ will be associated to ordinals in $[\beta_\alpha,\beta_{\alpha+1})$, and in fact $\sigma_\alpha$ is associated to $\beta_\alpha$. 
Elements of $\mathcal G_{\alpha+1}$ which are not of the form $\rho_{\alpha+1}(w)$ 
 for $w\in\Wncs_\alpha \cap \Wnr$ can be ignored for the purpose of coding (see Lemma~\ref{l.Pi-1-2} and Remarks~\ref{r.subwords} and \ref{r.nr}).

\medskip

For each $w\in \Wncs_\alpha\cap\Wnr_\alpha$ it is the pattern of stationarity on the block  of $\bar S$ consisting of the next $\omega$ ordinals after $\beta_\alpha + i_{\alpha}(w)$ that will code $w$ (i.e., 
we set $\gamma$ in \eqref{e.def.G'} equal to $\beta_\alpha + i_{\alpha}(w)$). Let, for $w\in \Wncs_\alpha\cap\Wnr_\alpha$, 
\begin{equation}\label{e.z^w}
z^w = \{ 2^m : m \in c^{\mathcal F}_{\alpha}\}\cup\{3^m : m \in c^W_{\beta_\alpha + i_{\alpha}(w)}\}
\end{equation}
(recalling that the reals $c^W_\gamma$ were constructed in Section~\ref{subsection.adding.reals}) and
define  
\[
\bar z = \la z^w : w\in \Wncs_\alpha \cap \Wnr_\alpha\ra.
\]
Further, define 
\[
Y^w_m = Y^1_{\beta_\alpha + i_\alpha(w) + m}
\] 
 for each $w \in \Wncs_\alpha\cap\Wnr_\alpha$ (the sets $Y^1_\delta$ where constructed in Section~\ref{subsection.preparing}) and let
\[
\mathcal Y = \la Y^w_m : w \in \Wncs_\alpha \cap \Wnr_\alpha, m\in\omega  \ra.
\] 
With the notation from Section~\ref{section.group.forcing} we now define 
\[
\QQ^{\mathcal G}_\alpha = \QQ^{\mathcal{F}_\alpha,\mathcal{Y},\bar z}_{\rho_\alpha,\{\beta_\alpha\}}.
\]
In Proposition~\ref{prop.group.forcing} we have seen that $\QQ^{\mathcal G}_\alpha$ is Knaster and adjoins a new generator $\sigma_{\alpha}$ such that the following properties hold:

\begin{enumerate}[label=(\Roman*$_\alpha$)]
\item\label{i.A.alpha}  The group $\la \im(\rho_\alpha)\cup\{\sigma_\alpha\}\ra$ is cofinitary.

\item\label{i.B.alpha} If $f\in V^{\PP_\alpha}\backslash\mathcal{G}_\alpha$ is a permutation which is not covered by finitely many members
of $\mathcal{F}_\alpha$ and  $\la\mathcal{G}_\alpha\cup\{f\}\ra$ is cofinitary,  
then for infinitely many $k$, $f(k)=\sigma_\alpha(k)$. 
This property will eventually ensure maximality of $\bigcup_{\alpha<\omega_M}\mathcal{G}_\alpha$.

\item\label{i.C.alpha} For each $w\in \Wncs_\alpha \cap\Wnr_\alpha$ there is $m_w\in \omega$ such that
for all $k\in\omega$, $w^{3k}[\sigma_\alpha](m_w)\in P_{S(\chi\res k)}$, where $\chi$ is the characteristic function of $z$. That is, $w[\sigma_{\alpha}]$
encodes $\mathcal{F}_\alpha$ via the real $c^{\mathcal F}_\alpha$ as well as $W^1_{\beta_\alpha+i_\alpha(w)}$ via the real $c^W_{\beta_\alpha+i_\alpha(w)}$.
\item\label{i.D.alpha} For each $w\in \Wa_\alpha$, for all $m\in\psi\big[w[\sigma_\alpha]\big]$, for all $\xi\in\omega_1$
$$|w[\sigma_\alpha]\cap f^{\alpha}_{m,\xi}|<\omega\text{ iff }\xi\in Y^w_{m}.$$
That is, $w[\sigma_\alpha]$ codes $Y^w_m$ for each $m\in\psi\big[w[\sigma_\alpha]\big]$ via almost disjoint coding.
\item\label{i.E.alpha} The cofinitary representation $\rho_{\alpha+1}=\rho_\alpha\cup\{(\beta_\alpha,\sigma_\alpha)\}$ 
is sufficiently generic.
\end{enumerate}

As we are going to see in the next section, property \ref{i.D.alpha}  implies that the new permutation $w[\sigma_\alpha]$ encodes itself
via stationary kills on the segment $\la S_\delta:\beta_\alpha+i_\alpha(w)\leq \delta<\beta_\alpha+i_\alpha(w)+\omega\ra$. Furthermore, these stationary kills are witnessed by countable suitable models containing $w[\sigma_\alpha]$.

Let  $\dot I_\alpha$ be a $\bar\PP^{\mathcal G}_{\alpha+1}$-name for
\[
I_\alpha=\big\{\beta_\alpha+i_\alpha(w)+m: w\in\Wncs_\alpha \cap\Wnr_\alpha, m\in\psi\big[w[\sigma_\alpha]\big]\big\}. 
\]
Thus $I_\alpha$ denotes the set of indices of the
stationary sets for which we explicitly adjoin reals encoding a stationary kill at stage $\alpha$ of the iteration. 
Note that $\beta_\alpha=\sup I_\alpha$. 

\medskip

The final forcing of the $\alpha$th step of the iteration, $\mathbb D$, is simply Hechler's $\sigma$-centered forcing to add a dominating real. 
With this the definition of $\QQ_\alpha = \QQ^{\mathcal{F}}_\alpha * \QQ^{\text{cd}}_{\mathcal F^*}(\mathcal{F}_\alpha)* \QQ^{\mathcal G}_\alpha * \DD$, and thus the description of the forcing iteration to add a co-analytic MCG of intermediate size, is complete.

\section{Definability and maximality of the group}\label{s.definability.maximality}

Forcing with $\PP^{\mathcal{G}}$ over $V_3$ we obtain a generic
  $G$ over $L$ for the forcing 
\[
\PP:=\PP^*_0  * \PP^W * \operatorname{Add}(\omega,\omega_N) *\PP^{\mathcal{G}}
\]
recalling that $\PP^*_0$ added the sets $W^1_\gamma$ and $Y^1_{\gamma+m}$, and $\PP^W$ added  reals $c^W_\gamma$ ``locally coding'' the ordinal $\gamma$ for each $\gamma \in \limord(\omega_M)$; $\operatorname{Add}(\omega,\omega_N)$ made $2^\omega = \omega_N$; and finally $\PP^{\mathcal{G}}$ added a generic self-coding subgroup of $S_\infty$ and a sequence of dominating reals.
Also recall that all the forcings after $\PP^*_0$ are Knaster, and $\PP^*_0$ did not add any countable sequences.  

Work in $L[G]$ from now on. 
First we must show that no real codes an ``accidental'' stationary kill.

\begin{lemma}\label{no_accidental_real} For each $\bar\delta$ which is not in
$I= \bigcup\{I_\alpha:\alpha<\omega_M\}$
there is no real in $L[G]$ coding a stationary kill of $S_{\bar\delta}$, i.e., there is no $r\in \powerset(\omega) \cap L[G]$ such that $L[r]\vDash S_{\bar\delta} \in \NS$.
\end{lemma}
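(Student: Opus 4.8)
The plan is to show that if some $\delta\notin I$ admitted a real $r\in L[G]$ with $L[r]\vDash S_\delta\in\NS$, we would derive a contradiction by analysing where such a real could have entered the iteration. First I would recall that the full forcing factors as $\PP=\PP^*_0*\PP(\mathcal C)*\operatorname{Add}(\omega,\omega_N)*\PP(\mathcal G)$, that $\PP^*_0$ adds no new reals and preserves stationarity of every $S_\eta$, and that all subsequent factors are Knaster (hence ccc) of size at most $\omega_{M-1}$ for the ``small'' parts and finitely supported throughout. Fix such a putative $r$; since everything after $\PP^*_0$ is ccc and the iteration has length $\omega_M$ (each $\QQ_\alpha$ being of size $\le\omega_1\cdot\ldots$), $r$ has a nice name supported on a set of size $\aleph_1$, so $r$ appears in an intermediate model obtained by a sub-iteration, and in particular the statement ``$S_\delta$ is nonstationary'' is already decided at some bounded stage. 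The key point is that a real $r$ with $L[r]\vDash S_\delta\in\NS$ must, by absoluteness of the relevant $\Pi^1_2$-type statements between $L[r]$ and suitable models, actually encode (via the fixed $\Sigma_1$ a.d.\ family $\mathcal F^*$, the sequences $\bar S^m$, and the localization properties $(*)_\delta$, $(**)_\delta$, $(***)_{\gamma,m}$) a club $C$ in $\omega_{M-1}$ disjoint from $S_\delta$; but such a club can only have been added by the designated forcing $\PP^{\text{cl}}_\delta$, and that forcing was invoked only for $\delta\in\limord(\omega_M)$-indexed blocks and — crucially — a real locally coding it is adjoined at stage $\alpha$ of $\PP(\mathcal G)$ exactly for $\delta\in I_\alpha$. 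So $\delta\in I$, contradiction.

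The main technical step is the ``reflection/absoluteness'' argument: one assumes $L[r]\vDash S_\delta\in\NS$ and wants to conclude that the pattern of stationarity/non-stationarity recorded by the localization machinery at block $[\gamma,\gamma+\omega)$ (where $\gamma$ is the largest limit ordinal $\le\delta$) is genuinely realized, forcing $\delta$ into some $I_\gamma$. Here I would take a countable elementary submodel, or rather a countable suitable model $\mathcal M$ with $r\in\mathcal M$ and $\omega_1^{\mathcal M}$ an appropriate point, collapse, and use that the formulas $\varphi(W,X,m)$ and $\theta(\omega_{M-1},X)$ are designed precisely so that local satisfaction in such $\mathcal M$ reflects a true stationary kill; then condensation in $L$ (the sets $W^0_\gamma$, $X_\delta$, $Y_{\gamma+m}$ were built via almost disjoint coding relative to definable sequences, with the ``suitable model'' reflection clauses $(*)$–$(***)$) pins down which $\delta$'s can occur. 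The combinatorial heart is that $\PP^*_0$ is a \emph{product} with careful support, preserving stationarity of each $S_\eta$ individually, so no club disjoint from $S_\delta$ for $\delta\notin I$ exists in $V_3$; and the ccc forcing $\PP(\mathcal G)$ cannot add one either, since ccc forcing preserves stationary subsets of $\omega_{M-1}$ (as $\operatorname{cf}(\omega_{M-1})>\omega$). Hence the only way $L[r]$ can see $S_\delta$ nonstationary is that $r$ \emph{decodes} a club that was explicitly added, i.e.\ $\delta\in I$.

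I expect the main obstacle to be verifying that $L[r]\vDash S_\delta\in\NS$ really does yield a decoded club disjoint from $S_\delta$ in $L[G]$ — one must rule out that $L[r]$ sees $S_\delta$ as nonstationary for ``spurious'' reasons (e.g.\ a club that exists in $L[r]$ but whose existence does not reflect to suitable models in the way $\varphi$ demands, or the possibility that $r$ is just a Cohen/almost disjoint real not coding anything coherent). This is handled by the absoluteness of the $\Pi^1_2$ description: the statement ``$\varphi(W^0_\gamma\cap\beta,X_\delta\cap\beta,m)$ holds in all countable suitable models containing the relevant pieces'' is $\Pi^1_2$ in $r$ and the fixed parameters, hence absolute between $L[r]$ and $L[G]$; combined with the fact that in $L[G]$ each such pattern is realized only for $\delta\in I$, we get $\delta\in I$. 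A secondary subtlety is bookkeeping: ensuring that $r$ genuinely lies in an intermediate extension by a set-sized (size $\aleph_1$) piece of the iteration so that the localization posets $\mathcal L(W^0_\gamma,X_\delta,m)$ have done their job below the relevant $\omega_1$-sized ordinal; this is where finiteness of supports and the $\aleph_1$-sized nice-name argument are used. Once the reflection step is in place, the rest is a direct appeal to the construction: the club coding $C_\delta$ was added only for the $\delta$ enumerated into the sets $I_\alpha$, so $\delta\notin I$ makes the hypothesis untenable.
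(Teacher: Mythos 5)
Your approach has a genuine gap, and its central factual premise about the construction is wrong. You claim that ``no club disjoint from $S_\delta$ for $\delta\notin I$ exists in $V_3$'' and that the ccc forcings cannot add one, so any $r$ with $L[r]\vDash S_\delta\in\NS$ must be \emph{decoding} an explicitly added club. But the preparatory forcing $\PP^*_0$ adds a club $C_\delta\subseteq\omega_{M-1}$ disjoint from $S_\delta$ for \emph{every} $\delta<\omega_M$ (together with $X_\delta$ and $Y_\delta$), not only for $\delta\in I$; the sets $I_\alpha$ govern only which $Y_\delta$ get transferred down to a real by the group forcing. So every $S_\delta$ is already nonstationary in $V_0\subseteq V_3\subseteq L[G]$, and an argument of the form ``$S_\delta$ is stationary in $V_3$ and ccc forcing preserves this'' cannot even get started. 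Likewise, the proposed fix for ``spurious'' reals via $\Pi^1_2$-absoluteness does not work: the statement $L[r]\vDash S_\delta\in\NS$ asserts the existence of a club in $\omega_{M-1}$ inside $L[r]$, which is not a projective statement about $r$ that Shoenfield absoluteness or the suitable-model reflection clauses control; nothing in your argument rules out a real that happens to compute such a club without ``coherently decoding'' anything. The whole point of the lemma is to exclude exactly such accidental reals, so this step cannot be waved through by absoluteness.

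The paper's proof goes by a preservation argument that you are missing. Fix $p\forces\check\delta\notin\dot I$ and factor the preparation as $\PP^*_0=\PP^{\mathcal W}*(\PP^{\neq\delta}_0\times\PP^{\delta}_0)$, where $\PP^{\delta}_0=\PP^{\text{cl}}_\delta*\PP^{\text{cd}}_\delta*\mathcal L(W^0_\gamma,X_\delta,m)$ is the part responsible for killing $S_\delta$. The key observation --- which is where the hypothesis $\delta\notin I$ is actually used --- is that the quotient $\PP'=\bigl(\operatorname{Add}(\omega,\omega_N)*\PP(\mathcal G)\bigr)\upharpoonright p$ is definable in $V_*=L[G^{\mathcal W}][G^{\neq\delta}_0]$, because below $p$ the iteration never consults $Y_\delta$; hence $G$ rearranges as $(G^{\mathcal W}*G^{\neq\delta}_0*G')\times G^{\delta}_0$. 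Then every real of $L[G]$ already lies in $V_*[G']$: $\PP^{\delta}_0$ adds no countable sequences over $V_*$, and since $\PP'$ is Knaster this persists over $V_*[G']$ by mutual genericity. Finally $\PP^{\mathcal W}*\PP^{\neq\delta}_0*\PP'$ preserves the stationarity of $S_\delta$ (the first two by construction, $\PP'$ because it is ccc and $\operatorname{cf}(\omega_{M-1})>\omega$), so $S_\delta$ is stationary in $V_*[G']$ and therefore in $L[r]$ for every real $r\in L[G]$ --- no decoding analysis of individual reals is needed, nor would it suffice. Your nice-name reduction to an intermediate stage of the iteration is a different, ``vertical'' decomposition and does not isolate the $\delta$-coordinate, which is what makes the argument work.
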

\begin{proof}
The argument has precursors in~\cite[Lemma~8.22]{memoirs} and~\cite[Lemma~3]{VFSFLZ11}.
Let $\dot{I}$ be a name for $I$ and suppose that $p \forces \check {\bar\delta} \notin \dot I$. 
Write ${\bar\delta} = \bar\gamma + \bar m$ with $\bar \gamma$ a limit ordinal and $\bar m\in\NN$.
Working in $L$, let
\begin{equation*}
\PP^{\neq{\bar\delta}}_0 =    \PP_{\neq{\bar\delta}}^{M-1} * \hdots * \PP_{\neq{\bar\delta}}^1.
\end{equation*}
where 
\[
\PP_{\neq{\bar\delta}}^{M-1}=\prod_{\delta\in\omega_M\setminus\{\bar\delta\}} \PP^{\text{cl}}_{\delta} 
\]
with supports of size at most $\omega_{M-2}$, 
and by finite reverse induction on $n \in \{M-2, \hdots, 1\}$
we define, in the $(L,\PP_{\neq\bar\delta}^{M-1} * \hdots \PP_{\neq\bar\delta}^{n-1})$-generic extension,
\begin{multline*}
\PP_{\neq\bar\delta}^{n}= 
\bigg( \PP^{\text{cd}}_{n}(W^{n+1}_{\bar\gamma}) * 
 \prod_{m\in\NN\setminus\{\bar m\}}  \PP^{\text{cd}}_{n}(Y^{n+1}_{\bar\gamma+m}) * \mathcal{L}_{n}(Z^{n}_{\bar\gamma+m},W^{n}_{\bar\gamma},m) \bigg) \times
 \\
 \prod_{\gamma\in\lim(\omega_{M})\setminus\{\bar \gamma\}}\bigg( \PP^{\text{cd}}_{n}(W^{n+1}_\gamma) * 
 \prod_{m\in\NN}  \PP^{\text{cd}}_{n}(Y^{n+1}_{\gamma+m}) * \mathcal{L}_{n}(Z^{n}_{\gamma+m},W^{n}_\gamma,m)\bigg)
\end{multline*}
with supports of size at most $\omega_{n-1}$.

Moreover, working in the $(L,\PP^{\neq\bar\delta}_0)$-generic extension, let
\begin{equation*}
\PP^{\bar\delta}_0 =    \PP_{\bar\delta}^{M-1} * \hdots * \PP^1_{\bar\delta}
\end{equation*}
where $\PP_{\bar\delta}^{M-1}=(\PP^{\textup{cl}}_{\bar\delta})^L$,  and by induction on $n \in \{M-2, \hdots, 1\}$,
\begin{equation*}
\PP_{\bar\delta}^{n} =     \PP^{\text{cd}}_{n}(Y^{n+1}_{\bar\delta})^{L[W^{n}_{\bar\gamma},Y^{n+1}_{\bar\delta}]} * \mathcal{L}_{n}(Z^{n}_{\bar\delta},W^{n}_{\bar\delta},m)^{L[W^{n}_{\bar\gamma},Z^{n}_{\bar\delta}]}.
\end{equation*}
Since $\PP_{\neq{\bar\delta}}^n$ does not add any subsets to $\omega_{n}$, it is routine to verify that  
\[
\PP^*_0 = \PP^{\neq\bar\delta}_0 * \PP^{{\bar\delta}}_0
\] 
up to equivalence of forcing notions.
Decompose the $\PP^*_0$-generic $G^*_0$ as $G^*_0 = G^{\neq \bar\delta}_0 * G^{{\bar\delta}}_0$ 
where $G^{\neq \bar\delta}_0$ is $(L,\PP^{\neq \bar\delta}_0)$-generic and $G^{{\bar\delta}}_0$ is $(L[G^{\neq \bar{\delta}}_0],\PP^{{\bar\delta}}_0)$-generic.
Working in $L[G^*_0]=L[G^{\neq {\bar\delta}}_0][G^{\bar\delta}_0]$ and writing
\[
\PP' = \big(\PP^W*\operatorname{Add}(\omega,\omega_N) *\PP^{\mathcal{G}} \big)\upharpoonright p
\] 
for the quotient $\PP / \PP^*_0$ below $p$,
it is easy to verify that $\PP' \in L[G^{\neq {\bar\delta}}_0]$ since the iteration never uses $Y^1_{\bar\delta}$.
Thus letting $G'$ be shorthand for the $\PP'$-generic, we may decompose $G$ as $G = G^{\neq {\bar\delta}} * (G'  \times G^{\bar\delta}_0)$.

Let $r$ be any real in $L[G] = L[G^{\neq {\bar\delta}}_0][G'][G^{\bar\delta}_0]$ and write
\[
V_* = L[G^{\neq {\bar\delta}}_0]
\]
We show that 
$
r \in V_*[G']= L[G^{\neq {\bar\delta}}_0][G']
$. 
For this, note that $\PP^{\bar\delta}_0$ adds no countable sequences  over $V_*$ (as even $\PP^*_0$ adds no such sequences to $L$). Moreover, by  the argument from Lemma~\ref{l.Knaster}, $\PP'$ remains ccc in $V_*[G^{\bar\delta}_0]$. It follows (see \cite[Claim, p.~9]{VFSFLZ11}) that $\PP^{\bar\delta}_0$ also adds no countable sequences over $V_*[G']$. So indeed $r \in V_*[G']$.
But since $\PP^{\neq {\bar\delta}} * \PP'$ preserves stationarity of $S_{\bar\delta}$, the latter is still stationary in $ V_*[G']=L[G^{\neq {\bar\delta}}_0][G']$ and hence in $L[r]$.
\end{proof}

Let
$\mathcal{G}$ be the group generated by $\{\sigma_\alpha:\alpha\in\omega_M\}$. 
Given $w \in \Wa_\alpha$, we write $w^G$ for $\rho_\alpha(w)$, i.e., the interpretation of $w$ that replaces every generator index $\beta_\gamma$ for $\gamma\leq\alpha$ by the corresponding generic permutation $\sigma_\gamma$. 
We shall occasionally use the notation $w[\sigma_\alpha]$ for $w^G$; this is consistent with our previous use of this notation.

\begin{lemma} %
The group $\mathcal{G}$ is a maximal cofinitary group.
\end{lemma}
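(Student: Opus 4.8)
The plan is to show $\mathcal{G}$ is both cofinitary and maximal, with the bulk of the work lying in maximality.

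\textbf{Cofinitariness.} First I would observe that $\mathcal{G} = \bigcup_{\alpha < \omega_M} \mathcal{G}_\alpha$ is an increasing union of groups, and each $\mathcal{G}_\alpha = \langle \im(\rho_\alpha)\rangle$ is cofinitary. For cofinitariness of the union it suffices to check that every non-identity element has finitely many fixed points; but any such element lies in some $\mathcal{G}_{\alpha+1}$, so it is either in $\mathcal{G}_\alpha$ (handled by induction) or of the form $w[g_\alpha]$ for some $w \in \mathrm{WD}_\alpha$. The latter case is covered by Property $(A_\alpha)$ together with the lemma establishing density of $D_w$ (that $w[\sigma^G]$ has the same number of fixed points as its shortest conjugated subword, which lies in $\mathrm{WS}_\alpha$, and each such word has finitely many fixed points). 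A routine induction on $\alpha$ closes this off; care is needed only to note that passing to a later stage $\beta > \alpha$ does not create new fixed points for elements already present, which follows since $\mathcal{G}_\alpha \leq \mathcal{G}_\beta$ and the groups are genuinely cofinitary at each stage.

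\textbf{Maximality.} Here is the crux. Suppose toward a contradiction that $\mathcal{G}$ is not maximal, so there is $f \in S_\infty \setminus \mathcal{G}$ with $\langle \mathcal{G} \cup \{f\}\rangle$ cofinitary. Since $\PP$ (after $\PP^*_0$) is a finite-support iteration of Knaster posets of length with uncountable cofinality at the relevant stage, and $\mathcal{G}$ has size $\aleph_M < \mathfrak{c}$... actually more carefully: the iteration $\PP(\mathcal{G})$ has length $\omega_M$ and finite support, so any real $f$ in $L[G]$ appears at some intermediate stage. But $f$ itself need not appear early — rather, I would use a reflection/chain-condition argument: by the ccc-ness (Knaster) of the tail of the iteration and a standard Löwenheim–Skolem argument, there is a stage $\alpha < \omega_M$ such that $f \in V_3[G^{\mathcal G}_\alpha]$ and $\langle \mathcal{G}_\alpha \cup \{f\}\rangle$ is cofinitary (cofinitariness of $\langle \mathcal{G}\cup\{f\}\rangle$ reflects down since any potential infinite fixed-point set of a word would be witnessed by finitely much information). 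The key additional point needed is that $f$ is \emph{not} covered by finitely many members of $\mathcal{F}_\alpha$: if it were, then since the $\mathcal{F}_\beta$ for $\beta \geq \alpha$ are chosen with graphs almost disjoint from everything added earlier, $f$ would have to essentially coincide with an element of $\mathcal{F}_\alpha$ up to finite error, but the elements of $\mathcal{F}_\alpha$ are permutations that do not lie in $\mathcal{G}_\alpha$ and whose join with $\mathcal{G}_\alpha$ is cofinitary — and more to the point, one can arrange (or observe from the construction in \cite{VFAT}) that no element of $\mathcal{F}_\alpha$ is covered finitely by $\mathcal{F}_\alpha$ itself together with $\mathcal{G}_\alpha$, so $\langle \mathcal{G}_\alpha \cup \{f\}\rangle$ would fail to be cofinitary or $f$ would be forced to agree infinitely often with a later generic anyway. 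Granting that $f$ is not finitely covered by $\mathcal{F}_\alpha$, Property $(B_\alpha)$ of Proposition~\ref{prop.group.forcing} (via density of $D^{\textup{hit}}_{\tau,m}$) gives that $g_\alpha(k) = f(k)$ for infinitely many $k$, so $\langle \mathcal{G}_{\alpha+1} \cup \{f\}\rangle \subseteq \langle \mathcal{G} \cup \{f\}\rangle$ contains the non-identity element $g_\alpha \circ f^{-1}$ with infinitely many fixed points — contradicting cofinitariness of $\langle \mathcal{G}\cup\{f\}\rangle$.

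\textbf{Main obstacle.} The delicate step is the reflection argument pinning down a stage $\alpha$ where simultaneously $f \in V_3[G^{\mathcal G}_\alpha]$, the extension by $f$ is still cofinitary, and $f$ is not covered by finitely many members of $\mathcal{F}_\alpha$. The first two are standard for finite-support Knaster iterations (closure points are club, and cofinitariness is a property witnessed by countably much data about each word $w[\cdot]$, so it reflects); the third requires knowing that the ``trap'' family $\mathcal{F}_\alpha$ genuinely sits outside the group being built and is not itself swallowed — this is exactly what the almost-disjointness design across stages (item \eqref{roadmap.group.first} of the road-map) and the requirement $\langle \im(\rho), f_{m,\xi}\rangle$ cofinitary are there to guarantee. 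I would phrase this as: if $f$ were covered by finitely many $f^\alpha_{m_1,\xi_1},\dots,f^\alpha_{m_k,\xi_k}$ then on a cofinite set $f$ agrees with one of them, say $f^\alpha_{m,\xi}$; but then consider stage $\alpha$ itself applied to the word $w = a$ (the generator $g_\alpha$) — Property $(D_\alpha)$ controls exactly when $|g_\alpha \cap f^\alpha_{m,\xi}| < \omega$, and by choosing the bookkeeping so that cofinally many pairs $(m,\xi)$ with $\xi \notin Y^{a}_m$ occur, we get infinitely many $k$ with $g_\alpha(k) = f^\alpha_{m,\xi}(k) = f(k)$, again a contradiction. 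So in all cases we reach a contradiction, and $\mathcal{G}$ is maximal.
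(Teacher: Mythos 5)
Your overall strategy is the same as the paper's: find a stage $\alpha$ at which the offending permutation $h$ is present and is not covered by finitely many members of $\mathcal F_\alpha$, then invoke property $(B_\alpha)$ to get infinite agreement with $g_\alpha$ and contradict cofinitariness of $\langle\mathcal G\cup\{h\}\rangle$. The cofinitariness half and the observation that every real appears at some intermediate stage of the finite-support Knaster iteration are fine (in fact cofinitariness of $\langle\mathcal G_\alpha\cup\{h\}\rangle$ is automatic, since it is a subgroup of $\langle\mathcal G\cup\{h\}\rangle$, so no reflection argument is needed there).

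The genuine gap is in your handling of the non-covering requirement, which is exactly the step the whole proof turns on. First, your claim that if $h$ is covered by finitely many members of $\mathcal F_\alpha$ then ``on a cofinite set $h$ agrees with one of them'' is false: pigeonhole only gives agreement with one member on an \emph{infinite} set. Second, the attempted repair via property $(D_\alpha)$ with $w=a$ is not available: the sets $Y^w_m$ are not free bookkeeping parameters --- they are the sets $Y_{\beta_\alpha+i_\alpha(w)+m}$ produced by the localization forcing and are dictated by the coding of the stationary-kill pattern, so you cannot ``arrange that cofinally many pairs $(m,\xi)$ with $\xi\notin Y^a_m$ occur''; and even granting that $g_\alpha$ agrees with some $f^\alpha_{m,\xi}$ on an infinite set and that $h$ agrees with the same $f^\alpha_{m,\xi}$ on an infinite set, these two infinite sets need not intersect, so no infinite agreement of $g_\alpha$ with $h$ follows. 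The paper closes this step much more simply: if $h$ were covered by finitely many members of $\mathcal F_\beta$ \emph{and} by finitely many members of $\mathcal F_{\beta+1}$, pigeonhole would produce $f\in\mathcal F_\beta$ and $f'\in\mathcal F_{\beta+1}$ with $\lvert f\cap f'\rvert=\omega$, contradicting the cross-stage almost disjointness built into the construction of the families; hence $h$ fails to be finitely covered at one of two consecutive stages, and property $(B)$ applies at the next stage. Your proof needs this (or an equivalent) argument in place of the cofinite-agreement/bookkeeping step.
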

\begin{proof}
By property \ref{i.A.alpha} of the iterands $\dot{\QQ}_\alpha$ 
the group $\mathcal{G}$ is cofinitary. It remains to show maximality. Suppose by contradiction that $\mathcal{G}$ is not maximal. Then there is a cofinitary permutation $h\notin \mathcal{G}$ such that the group generated by $\mathcal{G}\cup \{h\}$ is cofinitary. 
Write $G_\alpha$ for the $(L,\PP^*_0  * \PP^W * \operatorname{Add}(\omega,\omega_N) *\PP_\alpha)$-generic induced by $G$.
Find $\beta$ such that $h\in L[G_\beta]$.
 Then there is $\beta'\in\{\beta, \beta+1\}$ such that $h$ is not a subset of the union of finitely many members of $\mathcal{F}_{\beta'}$:
For otherwise by the pigeonhole principle we find $f \in \mathcal F_\beta$ and $f' \in \mathcal F_{\beta+1}$ such that $\lvert f \cap f' \rvert = \omega$, contradicting the choice of $\mathcal F_\beta$ and $\mathcal F_{\beta+1}$.
Letting $\alpha = \beta'+1$, by property \ref{i.B.alpha}
of the forcing $\QQ^{\mathcal G}_\alpha$
in $L[G_\alpha]$, the generic permutation $\sigma_\alpha$ infinitely  often takes the same value as $h$, and so $\sigma_\alpha\circ h^{-1}$ is not cofinitary, which is a contradiction.
\end{proof}

It remains to show that $\mathcal G$ is $\Pi^1_2$.

\begin{lemma}\label{lemma1}
Let %
$g \in S^\infty \cap L[G]$.
Then $g = w^G$ for some $w \in \bigcup_{\alpha<\omega_M}\Wncs_\alpha \cap\Wnr_\alpha$ if and only if there is $\gamma\in\limord(\omega_M)$ such that
\begin{equation*}\label{e.lemma1}
\psi[g] = \{m \in \omega :  L[g]\vDash S_{\gamma+m} \in \NS\},
\end{equation*}
or equivalently, such that 
$\psi[g] =\{m \in \omega :  (\exists r \in\powerset(\omega)) \; L[r]\vDash S_{\gamma+m} \in \NS\}$.
\end{lemma}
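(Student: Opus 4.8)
The plan is to prove both directions by tracing through the coding machinery set up in Sections~\ref{section.group.forcing} and \ref{subsection.group}, using Lemma~\ref{no_accidental_real} to rule out spurious codes. For the forward direction, suppose $g = w^G$ for some $w \in \mathrm{WS}_\alpha$. Set $\gamma = \beta_\alpha + i_\alpha(w)$; note $\gamma \in \limord(\omega_M) \subseteq \limord(\omega_N)$ since $i_\alpha$ lands in $\limord(|\alpha|)$ and $\beta_\alpha$ is a limit (or $w=a$ gives $i_\alpha(w)=0$ and $\gamma=\beta_\alpha$). By property $(D_\alpha)$ (item~\ref{i.D.alpha}), for each $m \in \psi[g]$ and each $\xi \in \omega_1$ we have $|g \cap f^\alpha_{m,\xi}| < \omega \iff \xi \in Y^w_m = Y_{\gamma+m}$, so $g$ together with the family $\mathcal F_\alpha$ (which $g$ itself codes via $c^{\mathcal F}_\alpha$, by property $(C_\alpha)$) recovers $Y_{\gamma+m}$. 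Now I invoke the localization property $(***)_{\gamma,m}$ together with $(*)_\delta$, $(**)_\delta$: in any countable suitable model $\mathcal M \ni g$ (hence $\mathcal M$ contains enough initial segments of $Y_{\gamma+m}$, $W^0_\gamma$, $X_{\gamma+m}$, which $g$ decodes), $\varphi(W^0_\gamma \cap \beta, X_{\gamma+m}\cap\beta, m)$ holds, which propagates upward to say that $\mathrm{Even}(Z_{\gamma+m})$ codes a club in $\omega_{M-1}$ disjoint from $S_{\gamma+m}$; hence $L[g] \vDash S_{\gamma+m} \in \NS$. Conversely, if $m \notin \psi[g]$, then $\gamma + m \notin I_\alpha$, and more globally $\gamma+m \notin I$ (one checks the indices $I_{\alpha'}$ for $\alpha' \neq \alpha$ are disjoint from the block $[\beta_\alpha,\beta_{\alpha+1})$ or at least miss $\gamma+m$), so Lemma~\ref{no_accidental_real} gives that no real codes a stationary kill of $S_{\gamma+m}$; in particular $L[g] \not\vDash S_{\gamma+m}\in\NS$. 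This establishes the first equality in \eqref{e.lemma1}, and the inclusion ``$\subseteq$'' of the second is immediate (take $r = g$); the inclusion ``$\supseteq$'' is again Lemma~\ref{no_accidental_real}: if $m \notin \psi[g]$ then $\gamma+m\notin I$ so no real $r$ has $L[r]\vDash S_{\gamma+m}\in\NS$.

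For the backward direction, suppose there are $\gamma \in \limord(\omega_N)$ and $k$ (the $k$ appears to be vestigial, or used to bound where $\gamma$ sits — I would simply carry it along or absorb it) with \eqref{e.lemma1} holding. First, using that $S_\delta$ is defined only for $\delta < \omega_M$ and is genuinely stationary in $L[G]$ for $\delta \notin I$, the set on the right-hand side of \eqref{e.lemma1} is contained in $\{m : \gamma + m \in I\}$; since $\psi[g]$ is infinite (as $g$ is a permutation of $\omega$, $\psi[g] = \{\psi(n,g(n)) : n\}$ is infinite), there are infinitely many $m$ with $\gamma+m \in I$. Because each $I_{\alpha'} \subseteq [\beta_{\alpha'},\beta_{\alpha'+1})$ and $\beta_{\alpha'} = \sup I_{\alpha'}$, an infinite subset of $\gamma + \omega$ meeting $I$ forces $\gamma + \omega \subseteq [\beta_\alpha, \beta_{\alpha+1})$ for a \emph{single} $\alpha$, and in fact $\gamma = \beta_\alpha + i_\alpha(w)$ for a unique $w \in \mathrm{WS}_\alpha$ — here I use that $i_\alpha$ enumerates exactly $\limord(|\alpha|)$ and that the blocks for distinct words are the successive $\omega$-blocks. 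Then $\{m : \gamma+m\in I\} = \{m : \gamma+m \in I_\alpha\} = \psi[w[g_\alpha]]$ by definition of $I_\alpha$. So $\psi[g] = \psi[w^G]$. Finally, to upgrade ``$\psi[g] = \psi[w^G]$'' to ``$g = w^G$'': this should follow because $g$ and $w^G$ both code, via the \emph{same} almost disjoint family $\mathcal F_\alpha$ on the \emph{same} index set $\psi[g]$, the sets $Y_{\gamma+m}$, and these in turn (via $(***)$ and the rigidity of the $L$-least codes) pin down the specific block of $\bar S$ and hence the word $w$ uniquely — but actually the cleaner route is: once we know $\gamma$ determines $\alpha$ and $w$, we have $g, w^G \in \mathcal G$ with $w^G$ a specific element; I would argue $g = w^G$ by noting that the common value of $\psi[g]=\psi[w^G]$ together with the coding property $(D_\alpha)$ forces $g$ and $w^G$ to agree on a coding skeleton dense enough to conclude equality, or alternatively by a genericity/density argument showing no two distinct cofinitary permutations in $\mathcal G$ can have equal $\psi$-image and equal decoded $Y$-sequence.

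\textbf{Main obstacle.} The delicate point is the last step of the backward direction: passing from $\psi[g] = \psi[w^G]$ (an equality of subsets of $\omega$) to the actual equality $g = w^G$ of permutations. Equality of $\psi$-images alone is far too weak; what must be extracted is that the coding is \emph{faithful enough} that the image set plus the decoded stationarity pattern determines the group element. I expect this requires either (a) an additional density argument inside the iteration $\PP(\mathcal G)$ showing that distinct words $w, w'$ always end up with $\psi[w^G]\neq\psi[w'^G]$ or with different decoded $Y$-sequences (so the map $w \mapsto (\psi[w^G], \text{decoded data})$ is injective on $\bigcup_\alpha \mathrm{WS}_\alpha$), or (b) observing that $g \in S^\infty \cap L[G]$ and $g \notin \mathcal G$ would contradict maximality combined with the self-coding — but that does not immediately give $g \in \mathcal G$ from the hypothesis. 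I would handle this by strengthening the bookkeeping: the family $\mathcal F_\alpha$ and the reals $c^{\mathcal F}_\alpha$, $c^W_{\gamma}$ are chosen precisely so that $\psi[w^G]$ encodes $c^{\mathcal F}_\alpha$ and $c^W_{\gamma}$, and $c^W_\gamma$ locally codes $\gamma$ itself; so from $\psi[g]$ one reads off $\gamma$, hence $\alpha$ and $w$, and then a final uniqueness lemma (that $\QQ^{\mathcal G}_\alpha$ generically forces $w[g_\alpha] \neq \tau$ for every $\tau \in \mathcal G_\alpha \cup \{$other words$\}$ with the prescribed decoded data) closes the gap. Getting that uniqueness lemma stated and used correctly is where the real work lies.
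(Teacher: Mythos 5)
Your forward direction and the first part of your backward direction track the paper's own argument: code-chasing through $(C_\alpha)$ and $(D_\alpha)$ to recover $\mathcal F_\alpha$, the sets $Y_{\gamma+m}$, hence $X_{\gamma+m}$ and the club $C_{\gamma+m}$ inside $L[g]$ for $m\in\psi[g]$, and Lemma~\ref{no_accidental_real} to exclude a stationary kill of $S_{\gamma+m}$ when $m\notin\psi[g]$ and to force $\psi[g]=\{m:\gamma+m\in I\}=\psi[w^G]$ in the converse. Up to that point you are fine, and your extra care about locating $\gamma+\omega$ inside a single block $[\beta_\alpha,\beta_{\alpha+1})$ is a reasonable elaboration of what the paper leaves implicit.

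The genuine problem is your ``main obstacle.'' There is no obstacle: $\psi$ is a fixed bijection $\omega\times\omega\to\omega$ and $\psi[g]$ denotes the image of the \emph{graph} of $g$ under $\psi$ --- exactly the reading $\{\psi(n,g(n)):n\in\omega\}$ that you yourself use when arguing $\psi[g]$ is infinite. Hence $\psi[g]=\psi[w^G]$ gives $g=\psi^{-1}\big[\psi[g]\big]=\psi^{-1}\big[\psi[w^G]\big]=w^G$ as sets of pairs, i.e.\ as permutations; this is precisely how the paper closes the backward direction in one line. Your claim that equality of $\psi$-images is ``far too weak'' contradicts that reading, and the remedies you sketch --- an injectivity/density lemma for $w\mapsto(\psi[w^G],\text{decoded data})$, a ``coding skeleton'' argument, a uniqueness lemma forced by $\QQ^{\mathcal G}_\alpha$ --- are all unnecessary and, as you concede, not actually carried out, so as written the proof is incomplete exactly where you announce that ``the real work lies.'' Replace that entire discussion with the one-sentence observation above and your argument matches the paper's.
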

\begin{proof}
Suppose $g = w^G$ and $\alpha$ is minimal such that $w \in \Wncs_\alpha \cap\Wnr_\alpha$.
We prove the lemma for $\gamma=\beta_\alpha+i_\alpha(w)$. 
By property \ref{i.C.alpha} of the forcing $\QQ^{\mathcal G}_\alpha$ the permutation $g$ codes $z^w$ and therefore 
\[
\mathcal F_{\beta_\alpha+i_\alpha(w)} \in L[g].
\]
By property \ref{i.D.alpha} of the forcing $\QQ^{\mathcal G}_\alpha$ the real $g$ almost disjointly codes via the family $\mathcal{F}_\alpha$ the set $Y^1_{\beta_\alpha+i_\alpha(w)+m}$ for each $m\in \psi[g]$. However $Y^1_{\beta_\alpha+i_\alpha(w)+m}$ codes  $C_{\beta_\alpha+i_\alpha(w)+m}$, the club set disjoint from $S_{\beta_\alpha+i_\alpha(w)+m}$. 

If $m\notin\psi[g]$, then
$\beta_\alpha+i_\alpha(w)+m\notin I_\alpha$ and so by Lemma~\ref{no_accidental_real}, there is no real $r$ in $L[G]$ such that $L[r]\vDash S_{\beta_\alpha+i_\alpha(w)+m}\in\NS$.

For the other direction, suppose $\gamma\in\limord(\omega_M)$ is such that  for all $m\in\omega$, 
$\;L[r]\vDash S_{\gamma+m} \in \NS$ for some $r \in\powerset(\omega)$ if and only if $m\in\psi[g]$.
Then by Lemma~\ref{no_accidental_real}, $\psi[g] \subseteq \{m \in \omega : \gamma + m \in  I_\alpha\}$. 
By the previous, the latter set equals $\psi[w^G]$ where $w$ is such that $\beta_\alpha + i_{\alpha}(w)=\gamma$ for some $\alpha<\omega_M$.
So $g=w[\sigma_\alpha]=w^G$.
\end{proof}

\begin{lemma}\label{l.MCG.coding.local}
Let $g=w^G$ for some %
$w \in \Wncs_\alpha \cap\Wnr_\alpha$ with $\alpha<\omega_M$.
Then for every countable suitable model ${\mathcal{M}}$ such that $g\in\mathcal{M}$ there is a limit ordinal $\bar\gamma<(\omega_M)^{\mathcal{M}}$ such that
$$\mathcal{M}\vDash \psi[g]= \{m \in\omega : S_{\bar\gamma+m} \in \NS \}.$$
\end{lemma}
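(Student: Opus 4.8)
The plan is to fix a countable suitable $\mathcal M$ with $g = w^G \in \mathcal M$, pass to $N := (L[w^G])^{\mathcal M}$ (again suitable, with the same constructible universe as $\mathcal M$, so $(\omega_M)^N = (\omega_M)^{\mathcal M}$), set $\beta := \omega_1^N < \omega_1$, and produce the required $\gamma$ by decoding it from $g$ inside $N$. Write $\gamma_0 = \beta_\alpha + i_\alpha(w)$. Property $(C_\alpha)$ of $\QQ_\alpha$ yields a parameter $m_w \in \omega$ with which $w^G$ codes $z^w$; since $z^w$ is arithmetic in $w^G$ and finitely many natural-number parameters, $z^w \in N$, hence also the reals $c^{\mathcal F}_\alpha$ and $c^W_{\gamma_0}$, which are recursive in $z^w$. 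As $\mathcal F^*$ is $\Sigma_1$ over $L_{\omega_2}$ with $a_{i,j,\xi} \in L_\mu$ whenever $L_\mu \vDash \lvert\xi\rvert = \omega$, the set $\mathcal F^* \cap N$ lies in $N$ and the almost disjoint decoding against $\mathcal F^*$ is absolute for $N$; so $W^0_{\gamma_0}\cap\beta \in N$ and the restriction of $\mathcal F_\alpha$ to indices below $\beta$ is in $N$. Feeding the latter into property $(D_\alpha)$ gives $Y_{\gamma_0+m}\cap\beta = \{\xi < \beta : \lvert w^G \cap f^\alpha_{m,\xi}\rvert < \omega\} \in N$ for every $m \in \psi[g]$.

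For the inclusion $\psi[g] \subseteq \{m : N \vDash S_{\gamma+m}\in\NS\}$ I would, for each $m \in \psi[g]$, apply the localisation property $(***)_{\gamma_0,m}$ with the ordinal $\beta$ and the suitable model $N$ — legitimate since $\beta < \omega_1$ and $W^0_{\gamma_0}\cap\beta,\, Y_{\gamma_0+m}\cap\beta \in N$ — obtaining $N \vDash \varphi(W^0_{\gamma_0}\cap\beta,\, X_{\gamma_0+m}\cap\beta,\, m)$. Unwinding $\varphi$ inside $N$: $W^0_{\gamma_0}\cap\beta$ almost disjointly codes the $L^N$-least code of a limit ordinal $\gamma < (\omega_M)^N$, one not depending on $m$, while $X_{\gamma_0+m}\cap\beta$ codes a triple $(\bar C,\bar W,\bar X)$ whose first coordinate is a club of $(\omega_{M-1})^N$ disjoint from $S_{\gamma+m}$; hence $N \vDash S_{\gamma+m} \in \NS$. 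This common $\gamma$ is the witness, and (recalling that the inner $L[w^G]$ in the statement is just $N$) this is the desired inclusion.

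The reverse inclusion is where the real work lies: for $m \notin \psi[g]$ — which, being arithmetic, has the same meaning in $N$ — one must show $N$ contains no club of $(\omega_{M-1})^N$ disjoint from $S_{\gamma+m}$, a local version of Lemma~\ref{no_accidental_real}. One ingredient is combinatorial: $\gamma_0 + m \notin I$ when $m \notin \psi[g]$, since $I_\alpha$ consists exactly of the ordinals $\beta_\alpha + i_\alpha(w') + m'$ with $m' \in \psi[w'[g_\alpha]]$, the map $i_\alpha$ is injective onto limit ordinals, and the blocks $[\beta_{\alpha'},\beta_{\alpha'+1})$ are pairwise disjoint, so $\gamma_0 + m \in I$ would force $w'=w$, $m'=m$. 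The other ingredient is a condensation argument: any club $C \in N$ disjoint from a stationary $S^N_\delta$ ($\delta < (\omega_M)^N$) must have been decoded from $w^G$ via the apparatus $(*)$--$(***)$ — since $L^N$ alone has no such club, each $S_\delta$ remaining stationary in $L$ absolutely — so $\delta$ is of the form $\gamma + m$ with $m \in \psi[g]$. I would derive this by rerunning the factorisation from the proof of Lemma~\ref{no_accidental_real}, writing $\PP^*_0 = \PP^{\mathcal W} * (\PP^{\neq(\gamma_0+m)}_0 \times \PP^{\gamma_0+m}_0)$, observing that $w^G$ and with it all of $N$ lives in an extension avoiding $\PP^{\gamma_0+m}_0$, and strengthening ``stationarity of $S_{\gamma_0+m}$ is preserved'' to ``no countable suitable submodel of any further Knaster extension sees $S_{\gamma_0+m}$ as non-stationary''.

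The main obstacle is this last transport. Since $\gamma$ is in general a proper collapse of $\gamma_0$ (for $\alpha \geq \omega_1$ necessarily distinct, $\gamma < (\omega_M)^{\mathcal M}$ being countable), the reflection cannot be applied to $\gamma$ directly but must be routed through the coding of $\gamma_0$ by $c^W_{\gamma_0}$ and through the condensation that identifies every club of $N$ with a decoded object. Once that is established, the remaining points — absoluteness of the arithmetic coding, membership of $\mathcal F^* \cap N$ in $N$, suitability of $N$, and the bookkeeping for $(***)_{\gamma_0,m}$ — are routine.
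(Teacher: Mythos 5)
Your first two paragraphs reproduce, in somewhat more careful detail, exactly the paper's own proof: from property $(C_\alpha)$ the real $g=w^G$ puts $z^w$, hence $c^{\mathcal F}_\alpha$ and $c^W_{\beta_\alpha+i_\alpha(w)}$, into $N=(L[g])^{\mathcal M}$; decoding against the definable family $\mathcal F^*$ gives $W^0_{\beta_\alpha+i_\alpha(w)}\cap\beta$ and the relevant part of $\mathcal F_\alpha$ inside $N$, property $(D_\alpha)$ then yields $Y_{\beta_\alpha+i_\alpha(w)+m}\cap\beta\in N$ for each $m\in\psi[g]$, and the localization property $(***)_{\gamma,m}$ applied to the suitable model $N$ gives $N\vDash\varphi(\cdot,\cdot,m)$, whose decoded club witnesses $S_{\gamma+m}\in\NS$ in $N$ for one decoded limit ordinal $\gamma$ independent of $m$. (Your observation that this $\gamma$ is the ordinal decoded inside $N$, in general a collapse of $\beta_\alpha+i_\alpha(w)$, is a point the paper passes over by simply writing $\gamma=i_\alpha(w)$.) So the portion of your argument that you actually complete coincides with the paper's proof of this lemma.

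Where you diverge is the reverse inclusion, which you call ``where the real work lies.'' The paper's proof consists solely of the forward inclusion described above and stops there: no local version of Lemma~\ref{no_accidental_real} is proved or even discussed in this lemma, and the global no-accidental-coding lemma is invoked only in the neighbouring lemmas (Lemma~\ref{lemma1}, and the converse lemma, where the countable model is obtained by L\"owenheim--Skolem from $L_{\omega_{M+1}}[g]$ so that elementarity transfers the statement to the real world). Thus the factorization-plus-condensation machinery you sketch in your last two paragraphs is extra relative to the paper, and you are right that transporting ``$S_{\delta}$ has no coded kill'' to an arbitrary countable suitable $\mathcal M$ (whose clubs of the countable ordinal $(\omega_{M-1})^N$ need not reflect anything about genuine stationarity) is a nontrivial obstacle; your proposal leaves that step unfinished, and the paper does not carry it out either. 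In short: measured against the paper's own proof your proposal is complete and follows the same route; as a proof of the stated set equality it is incomplete at exactly the point you flag, a point on which the paper itself is silent.
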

\begin{proof}
Suppose ${\mathcal{M}}$ is a countable suitable model such that $g\in\mathcal{M}$. 
Let $\alpha$ be minimal so that $w\in \Wncs_\alpha \cap\Wnr_\alpha$ and let $\gamma = i_{\alpha}(w)$.
Since $w^G=w[\sigma_\alpha]$ encodes $z^w$ (by property \ref{i.C.alpha} of $\QQ^{\mathcal G}_\alpha$ and minimality of $\alpha$) and by \hyperlink{p.collapse}{($\ddagger$)}, we have that 
\[
\{f^\alpha_{m,\xi} : m\in\omega, \xi < \beta\} \in\mathcal{M}
\] 
where $\beta=(\omega_1)^{\mathcal{M}}$.
By property \ref{i.D.alpha}, $g=w^G$ almost disjointly codes $Y^1_{\gamma+m}\cap\beta$ for each $m\in\psi[g]$ and hence $Y^1_{\gamma+m}\cap \beta\in\mathcal{M}$. 
Similarly, $W^1_{\gamma} \cap \beta\in \mathcal{M}$. 
Then for each $m\in\psi[g]$, by $\hyperlink{**}{(**)^1_{\gamma,m}}$ the sentence $\varphi(1,W^1_{\gamma}\cap\beta,m)$ holds in the suitable model $L[W^1_{\gamma} \cap \beta,Y^1_{\gamma+m}\cap \beta]^{\mathcal{M}}$ where $\beta = (\omega_1)^{\mathcal{M}}$.
Hence  in $\mathcal M$, the following holds:

The set $W^1_{\gamma}\cap\beta$ almost disjointly codes via the sequences $\bar S^1$, \dots, $\bar S^{M-1}$ a code for an ordinal $\bar \gamma < (\omega_M)^{\mathcal M}$ such that $S_{\bar\gamma+m}$ is not stationary.
As $m\in \psi^{-1}[g]$ was arbitrary and $\bar \gamma$ depends only on $\mathcal M$ and $\gamma$---which in turn depends only on $w$---and not on $m$,
 indeed $\bar\gamma$ witnesses that the lemma holds.
\end{proof}

\begin{lemma}\label{l.equidef} Let %
$g$ be a real such that for every countable suitable model $\mathcal{M}$ containing $g$ as an element there is $\bar\gamma<(\omega_{M})^{\mathcal{M}}$ such that
$$\mathcal{M}\vDash \psi[g]= \{m \in\omega : S_{\bar\gamma+m} \in \NS\}.$$
Then for some $\alpha < \omega_M$, $g = w^G = w[\sigma_\alpha]$ where $w \in \Wncs_\alpha\cap\Wnr_\alpha$. 
\end{lemma}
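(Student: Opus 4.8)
The plan is to prove the converse direction to the previous two lemmas: given a real $g$ which, in every countable suitable model $\mathcal{M}$ containing it, exhibits the local coding pattern $\psi[g] = \{m : L[g]\vDash S_{\gamma+m}\in\NS\}$ for some $\gamma < (\omega_M)^{\mathcal{M}}$, we must locate $\alpha<\omega_M$ and $w\in\mathrm{WS}_\alpha$ with $g = w^G$. First I would use a reflection/absoluteness argument: the hypothesis holds for \emph{all} countable suitable $\mathcal{M}$, and in particular for a club of countable elementary submodels of some large $L_\theta[G]$, so by a Löwenheim–Skolem/condensation argument the pattern must ``propagate upward'' to $L[g]$ itself — that is, there is a genuine ordinal $\gamma < \omega_M$ with $\psi[g] = \{m\in\omega : L[g]\vDash S_{\gamma+m}\in\NS\}$. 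The key point is that the formula $\varphi$ (and the $\NS$-statements) are arithmetic over $L[g]$ in the relevant codes, so what holds in cofinally many countable suitable models reflects to the real universe.

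Next I would invoke Lemma~\ref{no_accidental_real}: since $L[g]\vDash S_{\gamma+m}\in\NS$ means there is a real (namely $g$, or one computed from it) coding a stationary kill of $S_{\gamma+m}$, we must have $\gamma+m \in I$ for each $m\in\psi[g]$, and conversely $\gamma+m\notin I$ for $m\notin\psi[g]$. By the structure of $I=\bigcup_{\beta\in\limord(\omega_M)} I_\beta$ and the fact that each $I_\beta$ lives in the block $[\beta_\beta, \beta_{\beta+1})$ with $\beta_\beta = \sup I_\beta$, the ordinal $\gamma$ must be of the form $\gamma = \beta_\alpha + i_\alpha(w)$ for a (unique) $\alpha<\omega_M$ and $w\in\mathrm{WS}_\alpha$: indeed the blocks $[\beta_\alpha + i_\alpha(w), \beta_\alpha + i_\alpha(w)+\omega)$ are exactly the segments of $\bar S$ on which stationary kills are performed, and the pattern $\psi[g]$ restricted to such a block matches $\psi[w^G]$ by property $(D_\alpha)$ together with the definition of $I_\alpha = \{\beta_\alpha + i_\alpha(w)+m : w\in\mathrm{WS}_\alpha, m\in\psi[w[g_\alpha]]\}$. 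This pins down $\gamma$, hence $\alpha$ and $w$.

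Finally, once $\gamma = \beta_\alpha + i_\alpha(w)$ is identified, I would conclude $g = w^G$ exactly as in the last paragraph of the proof of Lemma~\ref{lemma1}: the set $\psi[g] = \{n\in\omega : \gamma+n\notin I\} = \psi[w^G]$ coincides with $\psi[w^G]$, and since $g$ and $w^G$ are both permutations of $\omega$ whose coding behaviour (via property $(C_\alpha)$, $(D_\alpha)$ and the injectivity of the coding apparatus, using that $w$ has no proper conjugated subword so $\mathrm{WS}_\alpha$-membership is witnessed) determines them from $\psi[g]$ together with the parameters $c^{\mathcal F}_\alpha$, $c^W_\gamma$ recoverable inside $L[g]$, we get $g = w[g_\alpha] = w^G$.

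\textbf{Main obstacle.} I expect the hard part to be the first step — the reflection argument that transfers the pattern ``$\psi[g] = \{m : L[g]\vDash S_{\gamma+m}\in\NS\}$ holds in every countable suitable $\mathcal{M}$'' up to a single fixed ordinal $\gamma<\omega_M$ working in $L[g]$ itself. One must be careful that the witnessing $\gamma$ is the \emph{same} across sufficiently many $\mathcal{M}$ (so that it stabilizes to a genuine ordinal below $\omega_M$), and that the $\NS$-statements, which are $\Sigma_1$-over-$\mathcal M$ in the coding parameters and involve the definable sequence $\bar S$ and the auxiliary sequences $\bar S^k$, really do reflect between the countable models and $L[g]$; this is where the care taken in Section~\ref{subsection.preparing} about suitable models and the localization property $(***)_{\gamma,m}$ gets used, together with the observation in Lemma~\ref{no_accidental_real} that $\PP^*_0$ adds no countable sequences and the remaining forcing is Knaster, so stationarity and its kills behave well.
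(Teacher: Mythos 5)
Your overall route is the same as the paper's: reflect via a countable elementary submodel (L\"owenheim--Skolem plus transitive collapse, whose collapse is a countable suitable model containing $g$) to obtain a genuine $\gamma\in\limord(\omega_M)$ with $\psi[g]=\{m\in\omega: L[g]\vDash S_{\gamma+m}\in\NS\}$, and then use Lemma~\ref{no_accidental_real} together with the block structure of $I$ to identify $\gamma$ with $\beta_\alpha+i_\alpha(w)$ for a unique $\alpha$ and $w\in\mathrm{WS}_\alpha$. One remark on your ``main obstacle'': the uniformity of $\gamma$ across many models is not needed at all --- a single elementary submodel suffices, because the statement being transferred is the existential one ``there is $\gamma\in\limord(\omega_M)$ with the pattern'', which goes up by elementarity of the collapse; no stabilization argument is required.

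The genuine gap is in your final step. First, the converse you assert --- that $m\notin\psi[g]$ implies $\gamma+m\notin I$ --- is not what Lemma~\ref{no_accidental_real} gives: that lemma says that if $\delta\notin I$ then \emph{no} real codes a kill of $S_\delta$; it does not say that $g$'s failure to code a kill of $S_{\gamma+m}$ puts $\gamma+m$ outside $I$ (a kill of $S_{\gamma+m}$ may have been adjoined without $g$ computing it). So the equality $\psi[g]=\psi[w^G]$, as you derive it, is unjustified. Second, your concluding step --- that the ``coding behaviour'' via $(C_\alpha)$, $(D_\alpha)$ and parameters recoverable in $L[g]$ ``determines'' the permutation --- is not a proof; those properties do not single out a unique permutation from its $\psi$-image plus coding data. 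What actually closes the argument (and is all the paper uses) is much simpler: from the hypothesis and Lemma~\ref{no_accidental_real} one gets the single inclusion $\psi[g]\subseteq\psi[w^G]$ (compare \eqref{e.lemma1}), and since $\psi$ is a bijection between $\omega\times\omega$ and $\omega$, the set $\psi[g]$ is literally a code for the graph of $g$; hence the inclusion says $g\subseteq w^G$ as graphs, and two total permutations of $\omega$ with nested graphs are equal. With that observation your argument is complete, and neither the converse inclusion nor the appeal to the coding apparatus is needed.
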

\begin{proof}
By L\"owenheim-Skolem  take a countable elementary submodel $\mathcal{M}_0$ of $L_{\omega_{M+1}}[g]$ such that $g \in \mathcal{M}_0$ and let $\mathcal{M}$ be the unique transitive model isomorphic to $\mathcal{M}_0$.
Then by assumption 
$$\mathcal{M}\vDash \big(\exists \bar\gamma\in\limord(\omega_{M})\big)\;\psi[g]= \{m \in\omega :S_{\bar\gamma+m}\;\text{is non-stationary}\}$$
so by elementarity the same holds with $\mathcal{M}$ replaced by $L_{\omega_{M+1}}[g]$, and hence for some $\gamma \in\limord(\omega_M)$
$$L[g]\vDash \psi[g]= \{m \in\omega : S_{\gamma+m} \in \NS \}.$$
By Lemma~\ref{lemma1} it follows that $g$ is of the desired form. 
\end{proof}

\begin{lemma}\label{l.Pi-1-2}
The MCG $\mathcal G$ is $\Pi^1_2$ in $L[G]$.
\end{lemma}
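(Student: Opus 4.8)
The plan is to read off, from the preceding three lemmas, a $\Pi^1_2$ formula defining the set of ``self-coding'' permutations, and then to show that the group it generates is again $\Pi^1_2$. Put $B=\{w^G : \alpha<\omega_M,\ w\in\mathrm{WS}_\alpha\}$, and let $\Phi(g)$ be the statement: for every countable suitable model $\mathcal M$ with $g\in\mathcal M$ there is a limit ordinal $\gamma<(\omega_M)^{\mathcal M}$ such that $(L[g])^{\mathcal M}\vDash\big(\psi[g]=\{m:L[g]\vDash S_{\gamma+m}\in\NS\}\big)$. By the last two lemmas, for a real $g$ one has $\Phi(g)$ if and only if $g\in B$. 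First I would verify that $\Phi$ is $\Pi^1_2$: a countable suitable model is the transitive collapse of a structure coded by a real $e$, and the statement ``$e$ codes a well-founded extensional model of $\mathrm{ZF}^-$ which is suitable and contains $g$'' is arithmetic in $(e,g)$ apart from the well-foundedness clause, which is $\Pi^1_1(e)$; the consequent of the implication speaks only about the coded structure and $g$, hence is arithmetic in $(e,g)$. Thus $\Phi(g)$ has the shape $\forall e\,\big[(\Pi^1_1(e)\wedge\text{arith})\to\text{arith}\big]=\forall e\,[\Sigma^1_1\vee\text{arith}]$, i.e.\ it is $\Pi^1_2$; so $B$ is a $\Pi^1_2$ set.

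Next I would note that $\mathcal G=\langle B\rangle$. Each generator $g_\alpha$ equals $a^G$ for the one-letter word $a=\beta_\alpha$, which lies in $\mathrm{WS}_\alpha$ (a one-letter word has no proper conjugated subword) and has $i_\alpha(a)=0$, so $g_\alpha\in B$; moreover the formal inverse of a word in $\mathrm{WS}_\alpha$ is again in $\mathrm{WS}_\alpha$, so $B$ is closed under taking inverse permutations, and trivially $B\subseteq\mathcal G$. Hence the subgroup of $S_\infty$ generated by $B$ is precisely $\mathcal G$.

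The remaining and central task is to show that $\langle B\rangle$ is $\Pi^1_2$; this is not automatic, since membership in the subgroup generated by a $\Pi^1_2$ set is prima facie only $\Sigma^1_3$. The plan is to absorb the existential quantifier over a decomposition into a universal quantifier over countable models. I would prove: $g\in\mathcal G$ if and only if $g\in S_\infty$ and for every countable suitable model $\mathcal M$ with $g\in\mathcal M$, $\mathcal M$ satisfies ``$g$ is a finite (possibly empty) product of permutations $h$ with $\Phi^{\mathcal M}(h)$'', where $\Phi^{\mathcal M}$ denotes the relativization of $\Phi$ to $\mathcal M$. This right-hand side is again $\Pi^1_2$ by exactly the complexity count of the first paragraph: ``$\mathcal M$ satisfies \dots'' is a first-order property of the coded structure, hence arithmetic in its code, and the clause ``for every countable suitable $\mathcal M$'' contributes only a $\Pi^1_1$ well-foundedness condition under a universal real quantifier. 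For the implication $\Leftarrow$, apply the hypothesis to $\mathcal M=$ the transitive collapse of a countable $\mathcal M_0\prec L_{\omega_{M+1}}[G]$ with $g\in\mathcal M_0$: such $\mathcal M$ is suitable because $L_{\omega_{M+1}}[G]\vDash\omega_M=\omega_M^L$ (as $\PP$ preserves cardinals), and since $\Phi$ is $\Pi^1_2$ it is absolute between $L_{\omega_{M+1}}[G]$ and $L[G]$ by Shoenfield's theorem (both contain $\omega_1$); therefore the factors witnessing $\mathcal M$'s belief are genuine elements of $B\subseteq\mathcal G$, whence $g\in\mathcal G$.

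For the implication $\Rightarrow$ --- which I expect to be the main obstacle --- one must show that whenever $g\in\mathcal G$ and $\mathcal M$ is \emph{any} countable suitable model containing $g$, then $\mathcal M$ already sees $g$ as such a product. I would approach this using the downward absoluteness of the $\Pi^1_2$ formula $\Phi$ (so that every member of $B$ lying in $\mathcal M$ is recognized inside $\mathcal M$) together with the fact that $g$, being a value $w^G$, carries into $\mathcal M$ enough of the construction's coding data to reconstruct a reduced word equal, as an element of the free group $\mathcal G$, to $w$, and hence an explicit product decomposition of $g$ over $B\cap\mathcal M$; should an arbitrary suitable $\mathcal M$ fail to suffice, I would instead restrict the universal quantifier to suitable $\mathcal M$ also closed under the relevant (arithmetic) decoding operation, a restriction that enters only as an arithmetic clause in the antecedent and hence keeps the whole statement at the $\Pi^1_2$ level. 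Combining the two directions with the complexity count then shows $\mathcal G$ is $\Pi^1_2$, which completes the proof of the Theorem.
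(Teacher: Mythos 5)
Your opening steps are sound and coincide with the paper's use of the two preceding lemmas: the set $B=\{w^G:\alpha<\omega_M,\ w\in\mathrm{WS}_\alpha\}$ is exactly the set of $g$ satisfying your $\Phi$, and your complexity count showing $\Phi$ is $\Pi^1_2$ is the intended one. The gap is precisely at the step you flag as the main obstacle, and it is not a removable technicality: the proposed characterization ``$g\in\mathcal G$ iff every countable suitable $\mathcal M\ni g$ believes $g$ is a finite product of permutations $h$ with $\Phi^{\mathcal M}(h)$'' has no viable proof of the forward direction. If $g\in\mathcal G\setminus B$ --- for instance $g=g_0g_1g_0^{-1}$, whose reduced word has a proper conjugate subword and so lies in no $\mathrm{WS}_\alpha$ --- then the iterands guarantee the coding properties $(C_\alpha)$, $(D_\alpha)$ only for $\mathrm{WS}$-words, so such a $g$ codes nothing at all. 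Taking for $\mathcal M$ the transitive collapse of a countable elementary submodel of a suitable initial segment of $L[g]$, the reals of $\mathcal M$ all lie in $L[g]$, and nothing in the construction puts $g_0$, $g_1$, or any $B$-factorization of $g$ into $L[g]$; hence $\mathcal M$ has no candidate factorization to recognize. For the same reason there is no ``arithmetic decoding operation'' to close the models under --- the missing factorization is exactly the information that is not coded --- so the fallback in your last paragraph has no content, and restricting the quantifier by clauses mentioning the actual generators would wreck either the $\Pi^1_2$ form or your (otherwise reasonable) Shoenfield argument for the converse direction.

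The paper avoids factorizations altogether. It first observes that every $g\in\mathcal G$ is turned into an element of $B$ by multiplying by a suitable power of the single distinguished generator $g_0$, i.e.\ $(g_0)^k g=w^G$ for some $w\in\mathrm{WS}_\alpha$ (to absorb conjugates such as $g_0g_1g_0^{-1}$ the power must be allowed to cancel against $g$). It then eliminates the illegal existential quantifier over $g_0$ by letting the countable model supply it: the defining statement says that for every countable suitable $\mathcal M$, \emph{if} some $g_*\in\mathcal M\cap S_\infty$ is recognized inside $\mathcal M$ as coding the block of $\bar S$ starting at $0$ (so $g_*$ plays the role of $g_0$), \emph{then} for some $k$ the single real $(g_*)^k g$ is recognized inside $\mathcal M$ as self-coding. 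Both antecedent and consequent are statements about $(L[\cdot])^{\mathcal M}$, hence arithmetic in a code for $\mathcal M$ together with the reals involved, so the whole statement remains $\Pi^1_2$; its equivalence with membership in $\mathcal G$ uses the two lemmas you invoked for $B$ together with Lemma~\ref{no_accidental_real}. Without this (or some comparable) device for passing from $B$ to the group it generates, your argument does not go through.
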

\begin{proof}
Recall that we denote by $\sigma_0$ the first generator added by $\QQ^{\mathcal G}_0$ over $V_3$. 
Note first that $g \in \mathcal G$ if and only if there is $k\in \omega$, $\alpha < \omega_M$, and $w \in \Wncs_\alpha \cap\Wnr_\alpha$   such that $(\sigma_0)^k g = w^G$. 

By the previous lemmas, $g \in \mathcal G$ if and only if $g \in S_\infty$ and the following statement $\Phi(g)$ holds: 
For every suitable countable model $\mathcal{M}$
\emph{if} $g \in \mathcal M$ and for some $\sigma_* \in \mathcal M \cap S_\infty$
$$\mathcal{M}\vDash \psi[\sigma_*]=\{m\in\omega: S_{0+m}\text{ is non-stationary}\},$$
that is, if $g^*$ is the first generator $\sigma_0$, \emph{then} for some $k\in\omega$
$$\mathcal{M}\vDash \big(\exists \gamma \in \limord(\omega_M)\big)\;\psi\big[(\sigma_*)^k g\big]=\big\{m\in\omega : S_{\gamma+m}\in \NS\big\}.$$
It is standard to see $\Phi(g)$ can be expressed by a $\Pi^1_2$ formula.
\end{proof}

Thus we obtain our main result:

\begin{theorem} Let $2\leq M < N < \aleph_0$ be given. There is a cardinal preserving generic extension of the constructible universe $L$ in which $$\mathfrak{a}_g=\mathfrak{b}=\mathfrak{d}=\aleph_M<\mathfrak{c}=\aleph_N$$
	and in which there is a $\Pi^1_2$ definable maximal cofinitary group of size $\mathfrak a_g$.
\end{theorem}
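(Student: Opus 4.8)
The plan is to assemble the final theorem from the lemmas that have already been proved, so the ``proof'' is really a verification that all the ingredients are in place. First I would record that the forcing $\PP = \PP^*_0 * \PP(\mathcal{C}) * \operatorname{Add}(\omega,\omega_N) * \PP(\mathcal G)$ is cardinal- and cofinality-preserving: $\PP^*_0$ adds no countable sequences (by the properties of almost disjoint coding and localization cited from \cite{VFSFLZ11}), so it preserves cardinals, and each of $\PP(\mathcal C)$, $\operatorname{Add}(\omega,\omega_N)$, $\PP(\mathcal G)$ is Knaster (the last by the final lemma of Section~\ref{section.group.forcing}, together with the fact that finite support iterations of Knaster posets are Knaster), hence ccc, hence cardinal preserving. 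Thus $L[G]$ is a cardinal preserving extension of $L$.

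Next I would compute the cardinal characteristics in $L[G]$. Since $V_0 \cap [\omega]^\omega = L \cap [\omega]^\omega$ and the remaining forcing adds reals, $\mathfrak c = \aleph_N$ is forced by $\operatorname{Add}(\omega,\omega_N)$ (the subsequent $\PP(\mathcal G)$ has size $\aleph_M < \aleph_N$ and is ccc, so cannot raise $\mathfrak c$ further; and $\PP(\mathcal C)$ has size $\aleph_{M-1}$... one checks $2^\omega=\aleph_N$ is preserved). For $\mathfrak b = \mathfrak d = \aleph_M$: the forcing $\PP(\mathcal C) * \operatorname{Add}(\omega,\omega_N) * \PP(\mathcal G)$ is a finite support iteration/product of $\sigma$-centered (indeed Knaster) posets of length $\leq \omega_N$ over $V_0$, but crucially $\PP(\mathcal G)$ has length $\omega_M$ and the $\QQ^{\mathcal G}_\alpha$ are $\sigma$-centered, so a standard cofinality argument over a model of CH shows that the ground-model reals of $V_3$ after $\operatorname{Add}(\omega,\omega_N)$ are not dominating, while the $\sigma$-centered length-$\omega_M$ iteration $\PP(\mathcal G)$ cannot add dominating or unbounded families of size $<\aleph_M$ by the usual $\sigma$-centered bookkeeping; hence $\mathfrak b = \mathfrak d = \aleph_M$. (In fact one should note that $\operatorname{Add}(\omega,\omega_N)$ adds Cohen reals which keep $\mathfrak b = \aleph_1$ unless $\PP(\mathcal G)$ pushes it up — so the argument is: $\mathfrak b \geq \aleph_M$ because any family of size $<\aleph_M$ appears at a stage $<\omega_M$ of $\PP(\mathcal G)$ and the remaining tail adds an unbounded real, while $\mathfrak d \leq \aleph_M$ because $\PP(\mathcal G)$ is proper of size $\aleph_M$ over a model where $\mathfrak d \leq \aleph_M$.) Then $\mathfrak a_g = \aleph_M$: the group $\mathcal G = \langle \{g_\alpha : \alpha < \omega_M\}\rangle$ is a MCG by the maximality lemma, and it has size $\aleph_M$, so $\mathfrak a_g \leq \aleph_M$; conversely $\mathfrak a_g \geq \mathfrak b = \aleph_M$ is not a theorem of ZFC, so instead I would use that $\mathfrak a_g \geq \aleph_M$ because $\PP(\mathcal G)$ is a length-$\omega_M$ $\sigma$-centered iteration over a model of $\mathfrak a_g = \aleph_1$... here one must be careful — the cleanest route is that $\aleph_M \leq \mathfrak{a}_g$ follows from the $\sigma$-centered iteration argument showing every cofinitary group of size $<\aleph_M$ appears at a bounded stage and is then extended generically, hence is not maximal in $L[G]$.

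Finally I would invoke the last lemma of the excerpt, which states exactly that $\mathcal G$ is $\Pi^1_2$ in $L[G]$, and combine: $\mathcal G$ is a $\Pi^1_2$ MCG of size $\aleph_M = \mathfrak a_g$, in a cardinal preserving extension of $L$ where $\mathfrak a_g = \mathfrak b = \mathfrak d = \aleph_M < \mathfrak c = \aleph_N$. The main obstacle I anticipate is the precise cardinal-characteristics bookkeeping — specifically verifying $\mathfrak a_g = \aleph_M$ (both inequalities) and $\mathfrak b = \mathfrak d = \aleph_M$ simultaneously, since one must track what each of the four forcing blocks does to reals and dominating/unbounded families, and in particular ensure that the $\sigma$-centered iteration $\PP(\mathcal G)$ of length $\omega_M$ neither collapses the already-large continuum nor fails to add the requisite unbounded reals at cofinally many stages below $\omega_M$; the genericity properties $(A_\alpha)$–$(D_\alpha)$ from Proposition~\ref{prop.group.forcing} do the heavy lifting for maximality and definability, so those parts are essentially bookkeeping over already-established facts.
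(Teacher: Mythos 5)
Your outline of cardinal preservation, $\mathfrak c=\aleph_N$, maximality and $\Pi^1_2$-definability matches the paper, but the cardinal-characteristic bookkeeping — exactly the part you flag as the main obstacle — is where your proposal genuinely diverges from and falls short of the paper's proof. The paper does \emph{not} obtain $\mathfrak b=\mathfrak d=\aleph_M$ from the construction as described in Section 4: it \emph{modifies} the iterand to $\PP_{\mathcal{F}_\alpha} * \PP^{\text{cd}}_{\mathcal{F}_\alpha}* \PP^{\mathcal G}_\alpha*\dot{\mathbb{D}}$, interleaving Hechler forcing at each of the $\omega_M$ stages, so that the Hechler reals form a scale of length $\omega_M$ (every real of $L[G]$ appears at a bounded stage of the ccc iteration and is dominated by all later Hechler reals), whence $\mathfrak b=\mathfrak d=\aleph_M$. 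Your proposal omits this step entirely, and without it the claimed values do not follow: after $\operatorname{Add}(\omega,\omega_N)$ one has $\operatorname{cov}(\mathcal M)=\aleph_N\leq\mathfrak d$, the unmodified $\QQ^{\mathcal G}_\alpha$ (and the coding/$\mathcal F_\alpha$ forcings) add no dominating reals, and unbounded reals added cofinally do not raise $\mathfrak b$; so ``the usual $\sigma$-centered bookkeeping'' gives neither $\mathfrak d\leq\aleph_M$ nor $\mathfrak b\geq\aleph_M$.

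The second gap is your treatment of $\mathfrak a_g\geq\aleph_M$. You assert that $\mathfrak b\leq\mathfrak a_g$ ``is not a theorem of ZFC,'' but it is: Brendle--Spinas--Zhang \cite{JBOSYZ} prove $\non(\mathcal M)\leq\mathfrak a_g$, and $\mathfrak b\leq\non(\mathcal M)$ is standard, and this inequality (together with the Hechler scale giving $\mathfrak b=\aleph_M$) is precisely how the paper gets the lower bound on $\mathfrak a_g$. Your substitute argument — every cofinitary group of size $<\aleph_M$ appears at a bounded stage ``and is then extended generically, hence is not maximal'' — does not work as stated: the generic $g_\alpha$ is generic only for the poset $\QQ^{\mathcal{F}_\alpha,\mathcal Y,\bar z}_{\rho_\alpha,\{\beta_\alpha\}}$, whose properties $(A_\alpha)$--$(D_\alpha)$ control words over $\im(\rho_\alpha)\cup\{g_\alpha\}$ only; nothing guarantees that $\la\mathcal H\cup\{g_\alpha\}\ra$ is cofinitary for an arbitrary small cofinitary group $\mathcal H$ of an intermediate model. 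Nor do the Cohen reals added at limit stages help: a Cohen-generic permutation has infinitely many fixed points, and Cohen forcing can even preserve maximality of suitably built MCGs (cf.\ the Cohen-indestructible group of \cite{VFDSAT}), so ``generic reals appear later'' is not by itself a proof of non-maximality. To repair your write-up you should import the Hechler modification and then quote $\mathfrak b\leq\mathfrak a_g$, as the paper does.
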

\begin{proof} It remains to see that in the final model $L[G]$ there are no maximal cofinitary groups of cardinality strictly smaller than $\aleph_M$. 
Recall from the beginning of Section~\ref{s.iteration}, Item (\ref{roadmap.group.last}) that each for $\alpha<\omega_M$, $\QQ_\alpha$ adds a dominating real over $V_3[G_\alpha]$. Thus in the final model $L[G]$ there is a scale of length $\omega_M$ and so $\mathfrak{b}=\mathfrak{d}=\aleph_M$. Since 
$\mathfrak{b}\leq\mathfrak{a}_g$ we obtain $\mathfrak{a}_g=\aleph_M$.
\end{proof}

\section{Questions}\label{s.questions}
In this section, we state some of the remaining open questions.
\begin{enumerate}
\item Can one construct in $\ZFC$ (or just $\ZF$) a countable cofinitary group which can not be enlarged to a Borel MCG? 
Recall here that in contrast to, e.g., MAD families, there \emph{are} Borel MCGs---yet the known ones are of a very special form, and it is completely unknown when one can extend a \emph{given} countable cofinitary group to a maximal \emph{Borel} one.
Note that in $L$, every countable group can be enlarged to a $\mathbf{\Pi}^1_1$ MCG. 
\item It is not even known whether a countable group which cannot be enlarged to a $\mathbf{\Pi}^1_1$ MCG exists in a forcing extension. Even the following is not known: Is it consistent with $\ZFC$ (or $\ZF$) that there exists a countable cofinitary group which cannot be enlarged to a Borel MCG?
\item Is there a model where $2^\omega > \omega_1$ and every uncountable cofinitary group $\mathcal G_0$ of size $<2^\omega$ is a subgroup of a MCG \emph{of the same size} as $\mathcal G_0$? The analogue question for MAD families  is also open; the latter question was originally posed by Fuchino, Geschke, Guzman and Soukup (see~\cite{OGSFSGLS}).

\item Suppose  there is a $\Sigma^1_2$ MCG of size $<\mathfrak c$.
Is there a $\Pi^1_1$ MCG of size $<\mathfrak c$?
Without the requirement on the cardinalities of the groups the question becomes trivial, since there exists a Borel MCG of size $\mathfrak c$.
For MAD families the answer is positive, as follows from a result of the fourth author (see~\cite{AT}). 

\item Is there a model where there is a MCG of size $\alpha$ with $\omega_1<\alpha<2^\omega$ but 
there is no MED family of size $\alpha$?
\end{enumerate}
For many of the above questions, it is also interesting to restrict the question to \emph{projective} MCG, MED families, etc.

\end{document}